\numberwithin{equation}{section}
\setlist[enumerate]{itemsep=0pt,label=$(\mathrm{\roman*})$, topsep=5pt}
\setlist[itemize]{itemsep=0pt, topsep=5pt, labelindent=\parindent, leftmargin=*}
\setlist[description]{itemsep=0pt, topsep=5pt, leftmargin=*}
\DeclareSymbolFont{cyrletters}{OT2}{wncyr}{m}{n}
\DeclareMathSymbol{\Sha}{\mathalpha}{cyrletters}{"58}
\newtheorem{thm}{Theorem}[section]
\newaliascnt{cor}{thm}
\newtheorem{cor}[cor]{Corollary}
\newaliascnt{lem}{thm}
\newtheorem{lem}[lem]{Lemma}
\newaliascnt{prop}{thm}
\newtheorem{prop}[prop]{Proposition}
\newaliascnt{conj}{thm}
\theoremstyle{definition}
\newaliascnt{dfn}{thm}
\newaliascnt{rem}{thm}
\newtheorem{rem}[rem]{Remark}
\newaliascnt{prob}{thm}
\newaliascnt{ex}{thm}
\newtheorem{ex}[ex]{Example}
\newcommand{\ab}{\mathrm{ab}}
\DeclareMathOperator{\Aut}{Aut}
\newcommand{\bad}{\mathrm{bad}}
\newcommand{\C}{\mathbb{C}}
\newcommand{\Cor}{\operatorname{Cor}}
\newcommand{\Cf}{\textit{cf.}\;}
\DeclareMathOperator{\Coker}{Coker}
\renewcommand{\div}{\mathrm{div}}
\newcommand{\dimFp}{\dim_{\Fp}}
\renewcommand{\d}{\partial}
\newcommand{\dbar}{\ol{\d}}
\newcommand{\dt}{\d^{\,t}}
\newcommand{\dX}{\d_X}
\newcommand{\ds}{\displaystyle}
\newcommand{\Ehat}{\widehat{E}}
\newcommand{\Ebar}{\ol{E}}
\newcommand{\Evbar}{\ol{E}_v}
\newcommand{\F}{\mathbb{F}}
\newcommand{\Fbar}{\ol{F}}
\newcommand{\Fp}{\F_p}
\newcommand{\Fl}{\F_l}
\newcommand{\Fv}{\F_v}
\newcommand{\Frob}{\operatorname{Frob}}
\newcommand{\FK}{\F_K}
\newcommand{\fin}{\mathrm{fin}}
\newcommand{\geo}{\mathrm{geo}}
\DeclareMathOperator{\Gal}{Gal}
\newcommand{\Gm}{\mathbb{G}_{m}}
\newcommand{\good}{\mathrm{good}}
\newcommand{\Hom}{\operatorname{Hom}}
\newcommand{\isomto}{\xrightarrow{\simeq}}
\renewcommand{\Im}{\operatorname{Im}}
\DeclareMathOperator{\Jac}{Jac}
\newcommand{\Jbar}{\ol{J}}
\newcommand{\Jvbar}{\ol{J}_v}
\newcommand{\JKbar}{\Jbar_K}
\DeclareMathOperator{\Ker}{Ker}
\newcommand{\loc}{\operatorname{loc}}
\newcommand{\locbar}{\ol{\loc}}
\newcommand{\locpbar}{\locbar_p}
\newcommand{\m}{\mathfrak{m}}
\newcommand{\ol}[1]{\overline{#1}}
\renewcommand{\O}{\mathcal{O}}
\newcommand{\Pbar}{\overline{P}}
\newcommand{\piab}{\pi_1^{\ab}}
\newcommand{\piabX}{\piab(X)}
\newcommand{\piabXgeo}{\piabX^{\geo}}
\newcommand{\plim}{\varprojlim}
\newcommand{\pmat}[1]{\begin{pmatrix}
	#1
\end{pmatrix}}
\newcommand{\Q}{\mathbb{Q}}
\newcommand{\Qp}{\Q_p}
\newcommand{\Ql}{\Q_l}
\newcommand{\Qbar}{\ol{Q}}
\newcommand{\R}{\mathbb{R}}
\newcommand{\res}{\operatorname{res}}
\newcommand{\ram}{\mathrm{ram}}
\newcommand{\red}{\mathrm{red}}
\newcommand{\sn}{\smallskip\noindent}
\renewcommand{\sp}{\operatorname{sp}}
\newcommand{\sep}{\mathrm{sep}} 
\newcommand{\surj}{\twoheadrightarrow}
\newcommand{\ssm}{\smallsetminus}
\newcommand{\Spec}{\operatorname{Spec}}
\newcommand{\tor}{\mathrm{tor}}
\newcommand{\VX}{V(X)}
\newcommand{\wh}[1]{\widehat{#1}}
\newcommand{\X}{\mathscr{X}}
\newcommand{\XKbar}{\ol{X}\!_K}
\newcommand{\Z}{\mathbb{Z}}
\newcommand{\Zhat}{\wh{\Z}}
\newcommand{\Zp}{\Z_p}
\title[a Hasse principle for $GL_2(\Fp)$ and Bloch's exact sequence]{a Hasse principle for $GL_2(\Fp)$ and Bloch's exact sequence for elliptic curves over number fields}
\author[T. Hiranouchi]{Toshiro Hiranouchi}\address[T. Hiranouchi]{
Department of Basic Sciences, Graduate School of Engineering, 
Kyushu Institute of Technology, 
1-1 Sensui-cho, Tobata-ku, Kitakyushu-shi, 
Fukuoka 804-8550 JAPAN}
\email{hira@mns.kyutech.ac.jp}
\keywords{Elliptic curves over global fields; higher Chow groups; Milnor $K$-groups, MSC2020: 11G05; 14C15; 19D45}
\begin{document}
\pagenumbering{arabic}
\date{\today}

\begin{abstract}
	We investigate the higher Chow groups, specifically 
$SK_1(E)$ for elliptic curves $E$
over number fields $F$. 
Focusing on the kernel $V(E)$ of the norm map 
$SK_1(E)\to F^{\times}$, we analyze its mod $p$ structure. 
We provide conditions, based on the mod $p$ Galois representations associated to $E$, 
under which the torsion subgroup of $V(E)$ is infinite. 
\end{abstract}

\maketitle

\section{Introduction}
Let $X$ be a smooth projective curve defined over a number field $F$.
The higher Chow group $CH^{2}(X,1)$ of $X$ can be written as the cokernel of the tame symbol map $\dt_{F(X)}$: 
\[
	CH^{2}(X,1) \simeq \Coker\left(\dt_{F(X)} \colon K_2^M(F(X)) \to \bigoplus_{x\in X_0} F(x)^{\times}\right),
\]
where $K_2^M(F(X))$ denotes the Milnor $K$-group of the function field $F(X)$ of $X$ 
and $F(x)$ is the residue field at a closed point $x\in X_0$ 
(\Cf\cite[Thm.~3]{Kat86b}). 
Following \cite{Blo81}, 
we write $SK_1(X)$ for $CH^{2}(X,1)$. 
This abelian group plays a significant role in the higher dimensional class field theory of S.~Bloch \cite{Blo81}. K.~Kato and S.~Saito \cite{KS83b}. 
To investigate the structure of $SK_1(X)$, 
we consider the kernel   
\[
V(X)=\Ker\left(f_{\ast}\colon SK_1(X)\to F^\times\right), 
\]
where 
$f\colon X\to \Spec(F)$ is the 
structure morphism.
Since the structure of $F^{\times}$ is well understood due to the classical finiteness theorem for the class group $\operatorname{Cl}(\O_F)$ of 
the ring of integers $\O_F$ of $F$ and the structure theorem for the unit group $\O_F^{\times}$, 
we focus on $V(X)$. 
Bloch conjectured that $V(X)$ is a torsion group (\Cf\cite[Remark 1.24]{Blo81}). 
The aim of this note is to investigate the structure of the torsion subgroup $V(E)_{\tor}$ of $V(E)$ 
for an elliptic curve $E$ over $F$.  

It is known that $V(E)$ is isomorphic to the Somekawa $K$-group $K(F;E,\Gm)$ associated to $E$ and the multiplicative group $\Gm$ (\cite{Som90}). 
By replacing $E$ with $\Gm$, the Somekawa $K$-group  
$K(F;\Gm,\Gm)$ is isomorphic to the Milnor $K$-group $K_2^M(F)$ of the field $F$. 
The tame symbol map 
\[
	\dt_F\colon K_2^M(F) \to \bigoplus_{v\colon\text{finite place of $F$}} \F_v^{\times} 
\]
is surjective. 
Here, $\F_v$ is the residue field of $F$ at a finite place $v$ of $F$. 
The kernel $\Ker(\dt_F)$ 
coincides with the algebraic $K$-group $K_2(\O_F)$ of the ring of integers $\O_F$ of $F$ 
and is investigated by many authors 
(see, e.g.\ \cite[Sect.~5.2]{Wei05}). 
In particular, the kernel $\Ker(\dt_F) = K_2(\O_F)$ 
is finite and 
is related to the order of the ideal class group of some number field. 
In the case $F=\Q$, more precisely, 
the following split exact sequence exists: 
\[
0 \to \Z/2\Z \to K_2^M(\Q) \xrightarrow{\dt_\Q}  \bigoplus_{l\colon\text{prime}}\F_l^{\times}\to 0,
\]
where $l$ runs through the set of all prime numbers
 (\cite[Chap.~IX, Sect.~2]{FV02}).

To study the structure of the group $V(E) \simeq K(F;E,\Gm)$  
for an elliptic curve $E$ over $F$, 
we consider a map   
\[
\partial_E\colon V(E) \to \bigoplus_{v\colon\text{finite, good}}\Evbar(\F_v) 
\]
induced from the boundary map 
\[
\d_E\colon SK_1(E)\simeq CH^{2}(E,1)\to \bigoplus_{v\colon \text{finite, good}}CH^{1}(\Evbar,0) = \bigoplus_v CH_0(\Evbar)
\]
of the higher Chow group of $E$ (see \autoref{sec:cft} for the definition). 
Here, $v$ runs through the set of finite places of $F$ at which $E$ has good reduction 
and $\Evbar$ is the reduction of $E$ at $v$. 
Let $G_F = \Gal(\ol{F}/F)$ be the absolute Galois group of $F$ 
and $E[p]_{G_F}$ the maximal $G_F$-coinvariant quotient of the $p$-torsion points $E[p]$. 
It can be seen $E[p]_{G_F}$ is involved in the mod $p$ structure of $V(E)$ by combining 
the two local-global principles below: 
\begin{itemize}
	\item A Hasse principle for the cohomology group $H^1(G,M)$ of a subgroup $G$ of $GL_2(\Fp)$ 
due to Ramakrishnan (see \autoref{prop:Ram}), and 
	\item The exact sequence of Bloch for $V(E)$ (see \autoref{prop:KS} (ii)). 
\end{itemize} 

Our first main result is the following:

\begin{thm}[\autoref{thm:pdiv}]
\label{thm:intro}
Let $E$ be an elliptic curve over a number field $F$ 
and $p$ a rational prime. 
If $E[p]_{G_F} \neq 0$, then  
	the kernel and the cokernel of the 
	map 
	\[
	\dbar_{E,p}\colon V(E)/pV(E) \to  \bigoplus_{v\colon\mathrm{finite,\, good}}\Evbar(\Fv)/p\Evbar(\F_v)
	\]
	induced from $\d_E$ 
	are finite. 
\end{thm}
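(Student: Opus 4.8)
The plan is to deduce the finiteness from the Hasse principle of \autoref{prop:Ram} by transporting it through the mod $p$ reduction of Bloch's exact sequence (\autoref{prop:KS}~(ii)). First I would record the integral sequence of \autoref{prop:KS}~(ii), which realizes $\partial_E$ as part of an exact sequence and thereby identifies $\Ker(\partial_E)$ and $\Coker(\partial_E)$ with arithmetic groups of Galois-cohomological origin; call them $A$ and $B$. Next I would reduce everything modulo $p$: applying $-\otimesZ\F_p$ and the attendant $\Tor$ long exact sequence, the snake lemma expresses $\Ker(\dbar_{E,p})$ and $\Coker(\dbar_{E,p})$ through $A\otimesZ\F_p$, $A[p]$, $B\otimesZ\F_p$, $B[p]$ and the $\Tor$-terms of the local summands. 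Since each $\Evbar(\F_v)$ is finite, one has $\Evbar(\F_v)\otimesZ\F_p\cong\Evbar(\F_v)/p\Evbar(\F_v)$ and $\Tor_1^{\Z}(\Evbar(\F_v),\F_p)\cong\Evbar(\F_v)[p]$, so these local contributions are harmless and the whole problem is reduced to the finiteness of the four groups $A\otimesZ\F_p$, $A[p]$, $B\otimesZ\F_p$, $B[p]$.

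The core of the argument is then to identify these four groups in terms of the Galois cohomology of $E[p]$ and to invoke \autoref{prop:Ram}. Via the Kummer sequences for $E/F$ and for its good reductions $\Evbar/\F_v$, the target $\bigoplus_v\Evbar(\F_v)/p\Evbar(\F_v)$ sits inside $\bigoplus_v H^1(G_{F_v},E[p])$, while $A$ and $B$ reduce modulo $p$ to the global group $H^1(G_F,E[p])$ and, via Poitou--Tate/Tate duality, to a $\Sha$-type group for the Cartier dual of $E[p]$. Here the hypothesis $E[p]_{G_F}\neq 0$ is essential: through the Weil pairing $E[p]$ is self-dual up to a twist, so $E[p]_{G_F}\neq 0$ controls the relevant $H^0$ (equivalently $H^2$) terms in Tate's Euler-characteristic formula and guarantees that the image $G=\Im(\rhobar)\subseteq GL_2(\Fp)$ together with $M=E[p]$ meets the hypotheses of \autoref{prop:Ram}.

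With this dictionary in place, \autoref{prop:Ram} shows that the localization map $H^1(G_F,E[p])\to\prod_v H^1(G_{F_v},E[p])$ has finite kernel, namely $\Sha^1(F,E[p])$, and that its cokernel is finite as well; feeding this back through the snake-lemma description forces $A\otimesZ\F_p$, $A[p]$, $B\otimesZ\F_p$ and $B[p]$ all to be finite, whence $\Ker(\dbar_{E,p})$ and $\Coker(\dbar_{E,p})$ are finite. The main obstacle I anticipate is precisely the matching in the second step: one must reconcile the restricted direct sum $\bigoplus_v$ appearing in the target with the product $\prod_v$ that governs the local-global principle, and show that only finitely many places $v$ contribute to the discrepancy, so that the global cohomological error terms genuinely remain finite rather than merely cofinite.
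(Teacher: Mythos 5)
Your overall scaffolding (Bloch's sequence, a Hasse principle, a snake lemma, and a reduction to finitely many local terms) points in the right direction, but two of your key steps do not work as stated. First, you misplace the role of the hypothesis $E[p]_{G_F}\neq 0$. \autoref{prop:Ram} holds for \emph{any} subgroup $G\subseteq GL_2(\Fp)$ and any $p$-primary $G$-module $M$; it needs no nonvanishing hypothesis, and it only gives \emph{injectivity}, so it can never produce the finiteness of a cokernel. In the paper the hypothesis is used quite differently: writing $m_E=\#(T(E)_{G_F})$, the sequence of \autoref{prop:KS}~(ii) tensored with $\Z/p$ has local terms $V(E_v)/\gcd(m_E,p)$, and the isomorphism $(T_p(E)_{G_F})/p\simeq E[p]_{G_F}\neq 0$ forces $p\mid m_E$, so that one actually obtains
\[
0\to V(E)/p\xrightarrow{\locpbar}\bigoplus_{v}V(E_v)/p\to E[p]_{G_F}\to 0,
\]
with left-exactness supplied by the Galois symbol map, Tate global duality and \autoref{prop:Ram} (this is \autoref{lem:Ram}). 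The finiteness of $\Coker(\dbar_{E,p})$ then comes from the finiteness of $E[p]_{G_F}$, not from any cohomological computation of a cokernel.

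Second, your ``dictionary'' identifying $\Ker(\partial_E)$ and $\Coker(\partial_E)$ with Galois-cohomological groups $A$, $B$ and then with $\Sha$-type groups of $E[p]^{\vee}$ via Kummer theory is never actually constructed, and the restricted-product issue you flag at the end is exactly where the proof must do real work. The paper resolves it by comparing the sequence above with $\dbar_{E,p}$ through the local boundary maps: for every good $v\nmid p$ the map $V(E_v)/p\to\Evbar(\F_v)/p$ is an \emph{isomorphism} (\cite[Prop.~2.29]{Blo81}), so the snake lemma leaves only the finitely many places with $v\mid p$, $v$ bad, or $v$ archimedean, where one bounds $\dimFp(V(E_v)/p)\le 2$ via the reciprocity map into $T(E_v)_{G_{F_v}}/p\simeq E_v[p]_{G_{F_v}}$ (resp.\ the Galois symbol into the finite group $H^2(F_v,E_v[p](1))$ at real places). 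Without this local input your argument cannot close, since nothing in your outline controls the difference between $\bigoplus_v V(E_v)/p$ and $\bigoplus_{v\ \mathrm{good}}\Evbar(\F_v)/p$. Note also that the relevant Galois symbol lands in $H^2(F,E[p](1))$, not in $H^1(G_F,E[p])$, so the Selmer-group framework you sketch would at best have to be run one cohomological degree higher.
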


Using Raskind's theorem on $V(E)$, 
we have $V(E)_{\tor}/pV(E)_{\tor}\simeq V(E)/pV(E)$ (\autoref{lem:VXtor}). 
The theorem above implies 
$\dimFp(V(E)_{\tor}/pV(E)_{\tor}) = \infty$ for some $p$ if $E[p]_{G_F}\neq 0$ 
(\Cf \autoref{rem:infty}).
 
A prime $p$ satisfies $E[p]_{G_F}\neq 0$ if and only if 
$p$ is a prime divisor of the abelian geometric fundamental group 
$\piab(E)^{\geo} := \Ker(\piab(E)\to G_F^{\ab})$ which is known to be finite 
by Katz-Lang \cite{KL81} (see also \autoref{prop:KS}). 
For example, 
if the mod $p$ Galois representation 
$\rho_{E,p}\colon G_F\to \Aut(E[p])$ associated to $E[p]$ 
is surjective, 
then $E[p]_{G_F}=0$ (\autoref{lem:quot}). 

In the case where $F=\Q$, 
the kernel and the cokernel of the map $\dbar_{E,p}$ 
is described by the local terms $V(E_l)/pV(E_l)$ for the \emph{bad primes} $l$, 
where $E_l := E\otimes_\Q\Ql$. 

\begin{thm}[\autoref{thm:EQ}]
\label{thm:introEQ}
	Let $E$ be an elliptic curve over $\Q$. 
	If $E[p]_{G_{\Q}} \neq 0$ for some odd prime $p$, 
	then 
	there is an exact sequence 
	\[
	0 \to \Ker(\dbar_{E,p})\to  \bigoplus_{l\colon \mathrm{bad}}V(E_{l})/pV(E_l)  \\
		\to \Z/p\Z \to \Coker(\dbar_{E,p})\to 0
	\]
	of finite dimensional $\Fp$-vector spaces, where $l$ runs through the set of primes $l$ at which 
	$E$ has bad reduction. 
\end{thm}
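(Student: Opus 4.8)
The plan is to realize $\dbar_{E,p}$ as the good-reduction part of the \emph{complete} localization map on $V(E)/pV(E)$ and to deduce the four-term sequence from Bloch's reciprocity sequence by a short diagram chase. Write $A=V(E)/pV(E)$, let $B=\bigoplus_{v\,\good}\Evbar(\Fv)/p\Evbar(\Fv)$ be the target of $\dbar_{E,p}$, and set $C=\bigoplus_{l\,\bad}V(E_l)/pV(E_l)$. I would first assemble the local boundary maps at all finite places into a single map
\[
\Psi=(\dbar_{E,p},\,\mathrm{loc}_{\bad})\colon A\longrightarrow B\oplus C.
\]
Two local inputs over $\Q$ justify this description. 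At a prime $l$ of good reduction the local reduction map induces an isomorphism $V(E_l)/pV(E_l)\isomto\Evbar(\Fv)/p\Evbar(\Fv)$ compatible with $\d_E$, so the good components of the complete localization are exactly $\dbar_{E,p}$; and at the real place the local group $V(E\otimes_{\Q}\R)/p$ vanishes because $p$ is odd (the Tate cohomology of $\Gal(\C/\R)$ with coefficients in the odd-order module $E[p]$ is trivial). This vanishing is the only point where the hypothesis that $p$ is odd enters.

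The key input is Bloch's exact sequence (\autoref{prop:KS} (ii)), which over $\Q$ I would read, after reduction modulo $p$, as the two assertions $\Ker(\Psi)=0$ and $\Coker(\Psi)\simeq\Z/p\Z$. The injectivity of $\Psi$ is the local--global principle for the mod $p$ structure of $V(E)$, supplied by Ramakrishnan's Hasse principle (\autoref{prop:Ram}) applied to $H^1(G_{\Q},E[p])$: a class dying at every completion is globally trivial. The cokernel is controlled by the global reciprocity law for $\Q$---the analogue for $K(\Q;E,\Gm)$ of the sequence $0\to\Z/2\Z\to K_2^M(\Q)\to\bigoplus_l\Fl^\times\to0$ recalled in the introduction---which identifies $\Coker(\Psi)$ with a single copy of $\Z/p\Z$, the hypothesis $E[p]_{G_{\Q}}\neq0$ ensuring that this reciprocity obstruction is non-trivial. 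I expect the identification of $\Coker(\Psi)$ to be the main obstacle, since it requires both surjectivity of the reciprocity map onto $\Z/p\Z$ and exactness at the middle term (image of $\Psi$ equal to the kernel of reciprocity).

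Granting these two properties of $\Psi$, the conclusion is purely homological. The projection $\pi_B\colon B\oplus C\to B$ defines a surjection of two-term complexes whose kernel complex is $[0\to C]$, that is, a short exact sequence
\[
0\to[0\to C]\to[A\xrightarrow{\Psi}B\oplus C]\to[A\xrightarrow{\dbar_{E,p}}B]\to0.
\]
Taking cohomology (with $H^0=\Ker$ and $H^1=\Coker$) yields the long exact sequence
\[
0\to\Ker(\Psi)\to\Ker(\dbar_{E,p})\to C\to\Coker(\Psi)\to\Coker(\dbar_{E,p})\to0,
\]
and substituting $\Ker(\Psi)=0$, $\Coker(\Psi)\simeq\Z/p\Z$ and $C=\bigoplus_{l\,\bad}V(E_l)/pV(E_l)$ gives exactly the asserted sequence. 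Finiteness over $\Fp$ is then immediate: $\Ker(\dbar_{E,p})$ and $\Coker(\dbar_{E,p})$ are finite by \autoref{thm:intro}, the term $\Z/p\Z$ is one-dimensional, and $C$ is a finite direct sum---over the finitely many primes of bad reduction---of the finite-dimensional local groups $V(E_l)/pV(E_l)$.
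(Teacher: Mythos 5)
Your architecture is essentially the paper's --- inject $V(E)/p$ into the sum of all local terms, identify the cokernel of that injection via Bloch's sequence, and peel off the good components by a homological bookkeeping step (your mapping-cone argument is a clean repackaging of the paper's snake lemma, and the final long exact sequence is correct). However, the proof as written has a genuine gap at the step you yourself flag as the main obstacle: the identification $\Coker(\Psi)\simeq\Z/p\Z$. What \autoref{prop:KS}~(ii), reduced modulo $p$, actually yields is $\Coker(\Psi)\simeq (T(E)_{G_\Q})/p\simeq E[p]_{G_\Q}$ (using $(T_p(E)_{G_\Q})/p\simeq E[p]_{G_\Q}$ and the fact that $E[p]_{G_\Q}\neq 0$ forces $p\mid m_E$, so $\gcd(m_E,p)=p$). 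The hypothesis only guarantees this group is \emph{non-zero}; a priori $\dimFp(E[p]_{G_\Q})$ could be $2$, in which case the middle term of your sequence would be $(\Z/p)^{\oplus 2}$. The missing ingredient is Mazur's theorem: $\dimFp(E[p]_{G_\Q})=2$ forces $I(E[p])=0$, hence a trivial mod $p$ representation and $E[p]\subset E(\Q)$, which is impossible for odd $p$ by the classification of $E(\Q)_{\tor}$. The analogy with $0\to\Z/2\to K_2^M(\Q)\to\bigoplus_l\Fl^{\times}\to 0$ is suggestive but is not an argument, and nothing in your write-up pins the dimension down to $1$.

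A secondary gap: your claim that at \emph{every} good prime the boundary induces an isomorphism $V(E_l)/p\isomto\ol{E_l}(\F_l)/p$ is Bloch's result only for $l\neq p$. When $E$ has good reduction at $p$ itself the map can have a kernel in general; over $\Q$ this kernel vanishes because the absolute ramification index is $1<p-1$ for odd $p$ (\autoref{lem:Yos}~(i), which rests on Yoshida's bound on $\piab(E_p)^{\geo}_{\ram}$), and surjectivity of the boundary then gives the bijection. This is precisely why the paper routes through \autoref{thm:fin}, which retains the extra summand $\Ker(\dbar_{E_p,p})$ and only kills it at the end. On the positive side, you correctly recognize that injectivity of $\Psi$ is \emph{not} contained in \autoref{prop:KS}~(ii) (which is only right-exact) and must come from Ramakrishnan's Hasse principle via the Galois symbol; that is exactly the paper's \autoref{lem:Ram} and \autoref{thm:pdiv}.
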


The local term $V(E_l)/pV(E_l)$ can be computed by 
using the Hilbert symbol when $E$ has multiplicative reduction at $l$ (\Cf \autoref{lem:mult}).  
For this reason, we study $\Ker(\dbar_{E,p})$ and $\Coker(\dbar_{E,p})$ more precisely 
for a semi-stable elliptic curve $E$ over $\Q$.  
For an odd prime $p$, 
if the mod $p$ Galois representation $\rho_{E,p}\colon G_\Q \to \Aut(E[p])$ is not surjective, then 
one of the three conditions below hold (\cite[Prop.~1]{Ser96}):
\begin{enumerate}
	\item[$(\mathrm{SC}_p)$] $E(\Q)[p] \neq 0$ and $E$ has more than one $\Q$-isogeny of degree $p$. 
	\item[$(\mathrm{B}'_p)$] $E(\Q)[p] \neq 0$ and $E$ has only one $\Q$-isogeny of degree $p$.
	\item[$(\mathrm{B}_p)$] $E(\Q)[p] = 0$ and there is a $\Q$-isogeny $E'\to E$ of degree $p$ with $E'(\Q)[p]\neq 0$.
\end{enumerate}
These conditions require that 
$E$ or an elliptic curve $E'$ isogenous to $E$, has a non-trivial $\Q$-rational $p$-torsion point 
for some odd prime $p$. 
Mazur's theorem on 
the torsion subgroup of the Mordell-Weil group $E(\Q)$  (\cite[Thm.~2]{Maz78}, \Cf\cite[Thm.~7.5]{Sil106})
says that 
the prime $p$ must be $3,5$ or $7$. 
For a semi-stable elliptic curve $E$ over $\Q$, an equality  
\[
\dimFp(E[p]_{G_{\Q}}) = 
\begin{cases}
1, &\mbox{if $(\mathrm{SC}_p)$ or $(\mathrm{B}_p)$ holds},\\
0, &\mbox{otherwise}
\end{cases}
\]
holds (\autoref{lem:quot}). 
For the even prime $p=2$, 
$\dim_{\F_2}(E[2]_{G_{\Q}}) \neq 0$ if and only if $E(\Q)[2]\neq 0$ (\autoref{lem:even}). 
By SageMath \cite{SAGE}, 
there are 21027 semi-stable elliptic curves $E$ over $\Q$ with conductor $<10000$. 
Within these, 
12201 curves satisfy $E[p]_{G_{\Q}}\neq 0$ for some prime $p$. 
\begin{ex}\label{ex:651}
We consider an isogeny class of elliptic curves over $\Q$ with conductor $651$. 
In this class, 
there are 3 semi-stable elliptic curves $E^{(1)}, E^{(2)}$ and  $E^{(3)}$ of the Cremona label 651e1, 651e2 and 651e3 respectively 
(\Cf\cite[\href{https://beta.lmfdb.org/EllipticCurve/Q/651/b/}{Elliptic Curve 651.b}]{lmfdb}). 
There are isogenies 
\[
\xymatrix@R=0mm{
E^{(3)} & E^{(2)} \ar[l]\ar[r]& E^{(1)} \\
\text{651e3} & \text{651e2} & \text{651e1}
}
\]
of degree $3$. 
Their Mordell-Weil groups are  
$E^{(1)}(\Q)[3] \simeq E^{(2)}(\Q)[3] \simeq \Z/3\Z$ and $E^{(3)}(\Q)[3] = 0$. 
The curve $E^{(2)}$ satisfies ($\mathrm{SC}_3$) and has 
split multiplicative reduction at $3$, $7$ and $31$. 
By computing the Hilbert symbol map 
(see \autoref{lem:multQ}, and \autoref{rem:tame}), 
we have 
\begin{align*}
\dim_{\F_3}\left(V(E^{(2)}_3)/3V(E^{(2)}_3)\right) &= 0,\ \mbox{and}\\
\dim_{\F_3}\left(V(E^{(2)}_7)/3V(E^{(2)}_7)\right) &= \dim_{\F_3}\left(V(E^{(2)}_{31})/3V(E^{(2)}_{31})\right) = 1, 
\end{align*}
where $E^{(2)}_l := E^{(2)}\otimes_{\Q}\Q_l$. 
By \autoref{lem:surj}, 
$\Coker(\dbar_{E,p}) = 0$ and hence 
\autoref{thm:introEQ} says 
$\dim_{\F_3}(\Ker(\dbar_{E^{(2)},3})) = 1$. 
The boundary map $\dbar_{E^{(2)},3}$ induces an exact sequence
\[
0\to \Z/3\Z \to  V(E^{(2)})/3V(E^{(2)}) \xrightarrow{\dbar_{E^{(2)},3}}\bigoplus_{l\colon\mathrm{good}}{\ol{E^{(2)}_l}(\F_l)}/{3\ol{E^{(2)}_l}(\F_l)} \to 0.
\] 
The curve $E^{(3)}$ satisfies ($\mathrm{B}_3$) and 
has also split multiplicative reduction at $3, 7$ and $31$.  
By \autoref{lem:multQ},  
\[
V(E^{(3)}_3)/3V(E^{(3)}_3) = V(E^{(3)}_7)/3V(E^{(3)}_7) = V(E^{(3)}_{31})/3V(E^{(3)}_{31}) = 0. 
\]
\autoref{thm:introEQ} gives an exact sequence 
\[
0\to  V(E^{(3)})/3V(E^{(3)}) \xrightarrow{\dbar_{E^{(3)},3}} \bigoplus_{l\colon\mathrm{good}}\ol{E^{(3)}_l}(\F_l)/3\ol{E^{(3)}_l}(\F_l) \to \Z/3\Z \to 0.
\] 
Finally, as $E^{(1)}$ satisfies $(\mathrm{B}'_3)$, we have $E^{(1)}[3]_{G_\Q} = 0$ (\autoref{lem:quot}). 
\end{ex}

For the case where $E(\Q)[2]\neq 0$ or $E$ has non-split multiplicative reduction at some prime, 
our approach only provides upper bounds of 
$\dimFp(\Ker(\dbar_{E,p}))$ and $\dim_{\Fp}(\Coker(\dbar_{E,p}))$ (see \autoref{ex:35a1}, \autoref{ex:17a2}).

\subsection*{Notation}
For a field $F$, 
let $L/F$ be a Galois extension with $G = \Gal(L/F)$, 
and $M$ a $G$-module. 
For each $i\in\Z_{\ge 0}$, we denote by 
$H^i(L/F, M) = H^i_{\mathrm{cont}}(G,M)$ the $i$-th continuous Galois cohomology group. 
If $L$ is a separable closure of $F$, then we write $H^i(F,M) = H^i(L/F,M)$. 
For an elliptic curve $E$ over a field $F$ and a field extension $L/F$, 
we denote by $E_L:=E\otimes_F L$ the base change to $L$. 

By a \textbf{number field}, we mean a finite field extension of the rational number field $\Q$. 
For a number field $F$, we use the following notation: 
\begin{itemize}
	\item $P(F)$: the set of places in $F$, 
	\item $P_{\mathrm{fin}}(F)$: the subset of $P(F)$ consisting of finite places, 
	\item $P_{\infty}(F) := P(F)\ssm P_{\fin}(F)$: the set of infinite places in $F$, and 
	\item $G_F := \Gal(\ol{F}/F)$ the absolute Galois group of $F$.   
\end{itemize}
For each place $v\in P(F)$, define 
\begin{itemize}
	\item $F_v$: the completion of $F$ at $v$, 
	\item $v\colon F_v^{\times}\to \Z$: the valuation map of $F_v$, 
	\item $\O_{F_v}$: the valuation ring of $F_v$, and 
	\item $\F_v := \O_{F_v}/\m_v$: the residue field of $F_v$.
\end{itemize}
For an abelian group $G$ and $m\in \Z_{\ge 1}$,  
we write $G[m]$ and $G/m$ for the kernel and cokernel 
of the multiplication by $m$ on $G$ respectively. 

A \textbf{curve} over a field $F$ we mean an integral scheme of dimension 1, of finite type over $F$. 

\subsection*{Acknowledgements} 
The author thanks Prof.~Yoshiyasu Ozeki for his comments on the mod $p$ Galois representations in this note.
The author was supported by JSPS KAKENHI Grant Number 24K06672.

\section{Class field theory}\label{sec:cft}

\subsection*{Abelian fundamental groups for curves}
Let $F$ be a field of characteristic 0, and 
$X$ a projective smooth curve over a field $F$ with $X(F)\neq \emptyset$. 
Note that the assumption $X(F)\neq \emptyset$ implies $X$ is geometrically connected. 
  We denote by $X_0$ the set of closed points in $X$. 
The group $SK_1(X)$ is defined by the cokernel of the tame symbol map 
\[
SK_1(X) = \Coker\left(\dt_{F(X)} \colon K_2^M(F(X)) \to \bigoplus_{x\in X_0} F(x)^{\times}\right), 
\]
where  
$F(x)$ is the residue field at $x \in X_0$, and 
$F(X)$ is the function field of $X$. 
The norm maps $N_{F(x)/F}\colon F(x)^{\times} \to F^{\times}$ 
for closed points $x \in X_0$ 
induce $N\colon SK_1(X) \to F^{\times}$. 
Its kernel is denoted by $\VX$. 
From the assumption $X(F)\neq \emptyset$, 
the map $N$ is surjective and the short exact sequence 
\[
0\to V(X) \to SK_1(X)\to F^{\times} \to 0
\]
splits. 
The Milnor type $K$-group $K(F;J,\Gm)$ 
associated to the Jacobian variety $J := \Jac_X$ of $X$ and the multiplicative group $\Gm$ 
is generated by symbols $\set{P,f}_{F'/F}$ 
of $P\in J(F')$ and $f\in \Gm(F') = (F')^{\times}$ 
for a finite field extension $F'/F$ 
(for the definition of the Somekawa $K$-group, see \cite{Som90}, \cite{RS00})
By \cite{Som90}, 
there is a canonical isomorphism 
\begin{equation}
\label{eq:Som}	
\varphi\colon V(X)\isomto K(F;J,\Gm)
\end{equation}
after fixing $x_0 \in X(F)$. 
For each $x\in X_0$ and $f \in (F(x))^{\times}$, 
the map $\varphi$ is given by 
\[
	\varphi(f) = \set{[x] - [x_0],f}_{F(x)/F}.
\]
On the other hand, 
there is a split exact sequence
\[
0 \to \piabXgeo\to \piabX\to G_F^{\ab}\to 0
\]
of abelian fundamental groups, where
$G_F^{\ab} = \Gal(F^{\ab}/F)$ is the Galois group of the maximal abelian extension $F^{\ab}$ of $F$, and 
$\piabXgeo$ is defined by the exactness. 
It is known that the geometric part 
$\piabXgeo$ is isomorphic to 
the $G_F$-coinvariant quotient $T(X)_{G_F}$ of the full Tate module 
$T(X) = \prod_{l\colon \text{prime}} T_l(X)$, where  
$T_l(X) := \plim_n J[l^n]$ 
and $J[l^n] := J(\ol{F})[l^n]$ is the group of $l^n$-torsion points of $J(\ol{F})$ (\Cf\cite{KL81} and \cite[Sect.~3]{KS83b}).

For any prime number $p$, 
it is known that the \textbf{Galois symbol map} 
\begin{equation}
	\label{eq:sFp}
	s_{F,p}\colon V(X)/p \simeq K(F;J,\Gm)/p \hookrightarrow H^2(F,J[p](1)) = H^2(F,J[p]\otimes \mu_p)
\end{equation}
is injective, where $\mu_p$ is the group of $p$-th roots of unity (\cite[Thm.~6.1]{Yam05}).

\subsection*{Class field theory for curves over a \texorpdfstring{$p$}{p}-adic field}
Let $K$ be a finite field extension of $\Qp$ 
and $X_K$ be a projective smooth and geometrically irreducible curve over $K$. 
Following \cite{Blo81}, \cite{Sai85a} and \cite{KS83b}, 
we recall the class field theory for the curve $X_K$. 
A map 
\[
\sigma_{X_K}\colon SK_1(X_K)\to \piab(X_K) 
\]
called the \textbf{reciprocity map} makes the following diagram commutative:
\[
\xymatrix{
0 \ar[r] & V(X_K)\ar[r]\ar[d]^{\tau_{X_K}} &SK_1(X_K)\ar[r]^-{N}\ar[d]^{\sigma_{X_K}} &K^{\times} \ar[r]\ar[d]^{\rho_{K}} & 0 \\
0 \ar[r] & \piab(X_K)^{\geo}\ar[r] & \piab(X_K)\ar[r] & G_{K}^{\ab}\ar[r] & 0,
}
\]
where $\rho_{K}$ is the reciprocity map of local class field theory.
\begin{thm}[\cite{Blo81},\cite{Sai85a}]
\label{thm:BS}
Let $X_K$ be a projective smooth and geometrically irreducible curve over $K$. 

\begin{enumerate}
	\item The kernel $\Ker(\sigma_{X_K})$ $($resp.~$\Ker(\tau_{X_K})$$)$ is the maximal divisible subgroup of $SK_1(X_K)$ $($resp.~$V(X_K)$$)$.
	\item The image $\Im(\tau_{X_K})$ is finite. 
	\item The cokernel $\Coker(\tau_{X_K})$ and the quotient $\piab(X_K)/\ol{\Im(\sigma_{X_K})}$ 
	of $\piab(X_K)$ by the topological closure $\ol{\Im(\sigma_{X_K})}$ of the image 
	of $\sigma_{X_K}$ is isomorphic to $\Zhat^r$ for some $r\ge 0$. 
\end{enumerate}	
\end{thm}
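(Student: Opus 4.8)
The plan is to follow Bloch \cite{Blo81} and Saito \cite{Sai85a} and reduce all three assertions to a single mod-$n$ reciprocity isomorphism obtained from arithmetic duality, deducing the structural statements afterward by a diagram chase together with finiteness inputs.

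First I would pin down the maps. The reciprocity map $\sigma_{X_K}$ is assembled from the local reciprocity maps at the closed points of $X_K$ and the reciprocity map of the function field $K(X_K)$; commutativity of the right-hand square with the local reciprocity map $\rho_K$ then reduces to a comparison of these local laws and is essentially formal. Passing to the kernels of the horizontal maps defines $\tau_{X_K}$ and, as recalled above, identifies $\piab(X_K)^{\geo}$ with the coinvariant module $T(J)_{G_K}$ of the Tate module of $J=\Jac_{X_K}$ (\cite{KL81}, \cite{KS83b}); in particular $\piab(X_K)^{\geo}$ is a quotient of $T(J)\cong\Zhat^{2g}$, hence a finitely generated $\Zhat$-module.

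The heart of the argument is the mod-$n$ statement: for every $n\ge 1$ the map $\sigma_{X_K}$ induces an isomorphism $SK_1(X_K)/n \isomto \piab(X_K)/n$. To prove it I would pass to étale cohomology of $X_K$ over the local field $K$, which has cohomological dimension $4$. On one side the cycle/Galois symbol map (Merkurjev–Suslin, Bloch–Kato, as used in \cite{Yam05}) realizes $SK_1(X_K)/n = CH^2(X_K,1)/n$ inside $H^3(X_K,\mu_n^{\otimes 2})$; on the other side $\piab(X_K)/n$ is Pontryagin dual to $\Hom(\piab(X_K),\Z/n)=H^1(X_K,\Z/n)$. The cup-product pairing $H^3(X_K,\mu_n^{\otimes 2})\times H^1(X_K,\Z/n)\to H^4(X_K,\mu_n^{\otimes 2})\isomto\Z/n$, with the last arrow the trace map, is a perfect pairing of finite groups by duality for curves over local fields; matching it with $\sigma_{X_K}$ gives the isomorphism, and finiteness of all the groups $H^i(X_K,\mu_n^{\otimes j})$ guarantees both sides are finite.

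Finally I would read off (i)–(iii). Since $\piab(X_K)$ is profinite it has no nonzero divisible subgroup, so the maximal divisible subgroup of $SK_1(X_K)$ is contained in $\Ker(\sigma_{X_K})$; conversely the mod-$n$ isomorphisms for all $n$, fed through the snake lemma for multiplication by $n$ and the finiteness of the $n$-torsion, show that $\sigma_{X_K}$ is injective modulo the maximal divisible subgroup, which gives (i), and the same argument applies to $\tau_{X_K}$. For (ii) and (iii), $\piab(X_K)^{\geo}\cong T(J)_{G_K}$ is finitely generated over $\Zhat$, hence isomorphic to $\Zhat^r\oplus(\text{finite})$; the mod-$n$ isomorphism forces $\Coker(\tau_{X_K})$ to be exactly the torsion-free summand $\Zhat^r$ and $\Im(\tau_{X_K})$ to be contained in the finite torsion summand, giving (iii) and the finiteness in (ii). The main obstacle is the mod-$n$ reciprocity isomorphism, and inside it the $p$-primary part for $p$ the residue characteristic: with $\mu_{p^m}$-coefficients the étale duality and the surjectivity of the symbol map onto the relevant cohomology are delicate, and one must control $H^\bullet$ of the function field (via Kato's results) and the Hochschild–Serre spectral sequence $H^i(K,H^j(\ol{X}_K,\mu_n^{\otimes 2}))$, or bring in $p$-adic/syntomic methods, to establish perfectness of the pairing at $p$.
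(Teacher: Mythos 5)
First, a point of comparison: the paper gives no proof of this statement at all. It is quoted directly from Bloch \cite{Blo81} and Saito \cite{Sai85a} as an external input, so there is no internal argument to measure your sketch against; I can only compare it with the actual proofs in those references.

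Your sketch has a genuine error at its core. The claimed mod-$n$ reciprocity \emph{isomorphism} $SK_1(X_K)/n \isomto \piab(X_K)/n$ is false in general, and it is inconsistent with part (iii) of the very statement you are proving: if $\sigma_{X_K}$ were bijective mod $n$ for every $n$, then $\Coker(\sigma_{X_K})/n$ would vanish for every $n$, so the cokernel would be a divisible quotient of the profinite, topologically finitely generated group $\piab(X_K)$ and hence zero, contradicting $\piab(X_K)/\ol{\Im(\sigma_{X_K})}\simeq \Zhat^{r}$ with $r>0$ (which does occur, e.g.\ for a Tate elliptic curve or a Mumford curve, precisely the setting of \cite{Yam05}). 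What duality for curves over a local field actually gives is an identification of $\piab(X_K)/n$ with the dual of $H^1(X_K,\Z/n)$, i.e.\ with $H^3(X_K,\mu_n^{\otimes 2})$, and the Merkurjev--Suslin theorem gives \emph{injectivity} of $SK_1(X_K)/n\to H^3(X_K,\mu_n^{\otimes 2})$ for $n$ prime to the residue characteristic; surjectivity fails exactly by $(\Z/n)^{r}$, and computing $r$ in terms of the special fiber of a semistable model is a separate and substantial part of Saito's paper, not a formal consequence of the pairing. Two further gaps: (a) mod-$n$ injectivity only yields $\Ker(\sigma_{X_K})\subseteq\bigcap_n nSK_1(X_K)$, which is weaker than divisibility of the kernel; upgrading this to the assertion that $\Ker(\sigma_{X_K})$ \emph{is} the maximal divisible subgroup requires control of the torsion of $SK_1(X_K)$ and $V(X_K)$ modulo the divisible part, which you have not supplied. (b) As you yourself flag, the $p$-primary part is the hard case and cannot be dispatched by \'etale duality with $\mu_{p^m}$-coefficients alone; its treatment is the technical heart of \cite{Sai85a}. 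So the architecture (duality plus a mod-$n$ statement) points in the right direction, but the key lemma as you state it is false, and the deductions of (i)--(iii) from it do not stand as written.
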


There is a proper flat scheme $\X_{\O_K}$ over $\O_{K}$ of $X_K$ such that 
the generic fiber is $\X_{\O_K} \otimes_{\O_{K}}K = X_K$. 
The special fiber $\X_{\O_K} \otimes_{\O_{K}}\F_K$ is denoted by 
$\XKbar$, where $\F_K$ is the residue field of $K$. 
Recall that $X_K$ is said to have \textbf{good reduction} 
if the special fiber $\XKbar$ is also smooth over the finite field $\F_K$. 
Now, we assume $X_K$ has good reduction and $X_K(K)\neq \emptyset$. 
By \cite[Sect.~2, Cor.~1]{KS83b}, 
the boundary map 
\[
 \bigoplus_{x \in (X_K)_0\subset (\X_{\O_K})_1}K_1(K(x)) \to \bigoplus_{\ol{x}\in (\XKbar)_0 = (\X_{\O_K})_0}K_0(\F_K(\ol{x}))
\]
of the $K$-groups 
(which is given by the valuation map $K(x)^\times \to \Z$)
induces a map 
\begin{equation}
\label{eq:dv}	
\d_{X_K}\colon SK_1(X_K)\to CH_0(\XKbar)
\end{equation}
which is surjective. 
There is a commutative diagram with exact rows
\begin{equation}\label{diag:d_X}
\vcenter{
\xymatrix{
	0 \ar[r] & V(X_K)\ar[d]^{\d_{X_K}} \ar[r] & SK_1(X_K)\ar[d]^{\d_{X_K}}  \ar[r]^-{N} & K^{\times}\ar[d]^{v_K}\ar[r] & 0\\
	0 \ar[r] & A_0(\XKbar) \ar[r] & CH_0(\XKbar)	\ar[r]^-{\deg} & \Z \ar[r] & 0, 
}}
\end{equation}
where the right vertical map $v_K$ is the valuation map of $K^{\times}$. 
The above diagram induces the local boundary map 
\begin{equation}\label{eq:locald}
	\d_{X_K}\colon V(X_K)\to A_0(\XKbar)\simeq \Jac_{\XKbar}(\F_K)\simeq \JKbar(\F_K), 
\end{equation}
where $\Jac_{\XKbar}$ is the Jacobian variety of the variety $\XKbar$ 
and $\JKbar$ is the reduction of the Jacobian variety $J_K = \Jac_{X_K}$ of $X_K$. 
Since the horizontal maps in \eqref{diag:d_X} split, 
the map $\d_{X_K} \colon V(X_K)\to \JKbar(\F_K)$ is also surjective. 
Precisely, fixing $x_0 \in X_K(K)$ and identifying the isomorphism 
$V(X_K)\simeq K(K;J_K,\Gm)$, 
for a finite extension $L/K$, $P\in J(L)$ and $f\in L^{\times}$, 
the map $\d_{X_K}$ is given by 
\begin{equation}
\d_{X_K}(\set{P,f}_{L/K}) = v_{L}(f)N_{\F_L/\F_K}(\Pbar),  
\end{equation}
where $v_L$ is the valuation map of the local field $L$, 
$\Pbar$ is the image of $P$ by the reduction map $\red_L\colon J_L(L)\to \ol{J_L}(\F_L)$, 
and $N_{\F_L/\F_K}\colon \Jbar_L(\F_L)\to \Jbar_K(\FK)$ is the norm map.

There is a surjective map $\sp_{X_K}\colon \piab(X_K)^{\geo}\to \piab(\XKbar)^{\geo}$ 
and its kernel is denoted by $\piab(X_K)^{\geo}_{\ram}$ (\Cf\cite{Yos02}).
The classical class field theory (for the curve $\XKbar$ over $\F_K$)   
says that the reciprocity map $\rho_{\XKbar}:A_0(\XKbar) \isomto \piab(\XKbar)^{\geo}$ is bijective 
of finite groups 
and makes 
the following diagram commutative: 
\begin{equation}
\label{diag:sp}	
\vcenter{\xymatrix{
	0 \ar[r] & \Ker(\d_{X_K})\ar@{-->}[d]^{\mu_{X_K}}  \ar[r] & V(X_K)\ar[r]^-{\d_{X_K}}\ar@{->>}[d]^{\tau_{X_K}}   &  A_0(\XKbar)\ar[d]^{\rho_{\XKbar}}_{\simeq} \ar[r] & 0\,\\ 
	0 \ar[r] & \piab(X_K)^{\geo}_{\ram} \ar[r] &\piab(X_K)^{\geo}\ar[r]^-{\sp}  &  \piab(\XKbar)^{\geo} \ar[r] & 0.
	}
	}
\end{equation}
For the commutativity of the right square in the above diagram, 
see \cite[Prop.~2]{KS83b}. 
The reciprocity map $\mu_{X_K}$ induces an isomorphism of finite groups
\begin{equation}\label{eq:mu}	
	\Ker(\d_{X_K})/\Ker(\d_{X_K})_{\div} \isomto \piab(X_K)^{\geo}_{\ram},
\end{equation}
where $\Ker(\d_{X_K})_{\div}$ is the maximal divisible subgroup of $\Ker(\d_{X_K})$
(\Cf\cite[Sect.~2]{GH21}). 

\subsection*{The exact sequence of Bloch}
In the following, 
we assume that $F$ is a \textbf{number field}, that is, a finite extension of $\Q$ (cf.\ Notation). 
Let $X$ be a projective smooth curve over $F$ with $X(F)\neq \emptyset$.
For each $v\in P(F)$, we denote by $X_v$ the base change $X\otimes_F F_v$ of 
$X$ to the local field $F_v$. 
Put
\begin{align*}
	\Sigma_\good(X) &:= \set{v \in P_{\fin}(F) | \mbox{$X$ has good reduction at $v$}}, \mbox{and}\\
\Sigma_\bad(X) &:= P_{\fin}(F)\ssm \Sigma_\good(X).
\end{align*}
For the curve $X$, 
we denote by $V(X)_{\tor}$ the torsion subgroup of $V(X)$. 
As noted in Introduction,
Bloch's conjecture says the equality   
$V(X) = V(X)_{\tor}$ holds (\cite[Rem.~1.24]{Blo81}).

\begin{lem}
\label{lem:VXtor}
For a prime $p$, the inclusion map $V(X)_{\tor} \hookrightarrow V(X)$ 
gives an isomorphism $V(X)_{\tor}/p \xrightarrow{\simeq} V(X)/p$. 
\end{lem}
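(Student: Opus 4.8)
The plan is to reduce the assertion to the divisibility of the torsion-free quotient $Q := V(X)/V(X)_{\tor}$. I would start from the short exact sequence
\[
0 \to V(X)_{\tor} \to V(X) \to Q \to 0,
\]
and note that $Q$ is torsion-free, being the quotient of an abelian group by its full torsion subgroup, so that $Q[p] = 0$. The one non-formal ingredient is Raskind's theorem on the structure of $V(X)$, which in the form I need guarantees that $Q$ is divisible; in particular $Q = pQ$, hence $Q/p = 0$.

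With these two vanishings in hand, I would apply the snake lemma to multiplication by $p$ on the displayed sequence. This produces the six-term exact sequence
\[
0 \to V(X)_{\tor}[p] \to V(X)[p] \to Q[p] \to V(X)_{\tor}/p \to V(X)/p \to Q/p \to 0,
\]
in which the map $V(X)_{\tor}/p \to V(X)/p$ is precisely the one induced by the inclusion $V(X)_{\tor} \inj V(X)$. Since $Q[p] = 0$ and $Q/p = 0$, the terms on either side of this map vanish, so it is an isomorphism $V(X)_{\tor}/p \isomto V(X)/p$, as required.

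Should one prefer to avoid the snake lemma, the two halves can be verified directly: surjectivity holds because the image $\bar v \in Q$ of any $v \in V(X)$ is $p$-divisible, say $\bar v = p\bar w$, whence $v - pw \in V(X)_{\tor}$ represents the class of $v$ modulo $pV(X)$; and injectivity holds because if $t \in V(X)_{\tor}$ lies in $pV(X)$, say $t = pv$, then $p\bar v = 0$ in the torsion-free group $Q$ forces $v \in V(X)_{\tor}$, so that $t \in pV(X)_{\tor}$. Either way, the only real obstacle is to invoke Raskind's divisibility result correctly; the remaining steps are a routine diagram chase, and the main point requiring care is to confirm that the standing hypotheses on $X$ (smooth projective over the number field $F$ with $X(F)\neq\emptyset$) are exactly those under which Raskind's theorem is available.
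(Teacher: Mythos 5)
Your proof is correct and follows essentially the same route as the paper: both rest on Raskind's exact sequence $0 \to V(X)_{\tor} \to V(X) \to V(X_{\Fbar})^{G_F} \to 0$ (your $Q$ is exactly the uniquely divisible group $V(X_{\Fbar})^{G_F}$) and then the six-term sequence for multiplication by $p$. The only cosmetic difference is that you get $Q[p]=0$ from torsion-freeness of the quotient by the full torsion subgroup, whereas the paper reads it off from unique divisibility.
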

\begin{proof}
As evidence for Bloch's conjecture on $V(X)$, Raskind proved that 
for a base change $X_{\Fbar} = X\otimes_F \Fbar$, 
there is a short exact sequence 
\[
0 \to V(X)_{\tor} \to V(X) \to V(X_{\Fbar})^{G_F}\to 0
\]
where the fixed subgroup $V(X_{\Fbar})^{G_F}$ is uniquely divisible (\cite[Thm.~0.2]{Ras90}). 
The above exact sequence induces 
\[
(V(X_{\Fbar})^{G_F})[p] \to  V(X)_{\tor}/p \to V(X)/p \to V(X_{\Fbar})^{G_F}/p \to 0.
\]
As $V(X_{\Fbar})^{G_F}$ is uniquely divisible so that torsion free, 
we obtain 
\[
(V(X_{\Fbar})^{G_F})[p] = V(X_{\Fbar})^{G_F}/p = 0.
\]
The assertion follows from these equalities.
\end{proof}

\begin{prop}[{\cite[Sect.~5, Prop.~5]{KS83b}}]
\label{prop:KS}
Let $X$ be a projective smooth curve over a number field $F$ with $X(F)\neq \emptyset$.
\begin{enumerate}
	\item $T(X)_{G_F} \simeq \piabXgeo$ is finite 
	and $T(X)_{G_{F_v}} \simeq T(X_v)_{G_{F_v}}\simeq \piab(X_v)^{\geo}$ are finite for almost all places $v\in P(F)$.
	\item 
	Put $m_X = \# (T(X)_{G_F})$. 
	Then, we have an exact sequence 
	\[
	V(X) \xrightarrow{\loc} \bigoplus_{v\in P(F)} V(X_{v})/m_X \to (T(X))_{G_F}\to 0.
	\]
\end{enumerate}
\end{prop}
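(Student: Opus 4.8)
The plan is to handle the two parts separately: part (i) is a finiteness statement from geometric class field theory, while part (ii) is really a strong‑approximation assertion that I would reduce to global duality. For part (i), the isomorphism $\piabXgeo\simeq T(X)_{G_F}$ is the one recalled before the statement, and the finiteness of $T(X)_{G_F}$ is precisely the Katz--Lang finiteness theorem \cite{KL81} for the geometric abelian fundamental group of $X$ over the number field $F$. For almost all $v$ the curve $X_v$ has good reduction; then \autoref{thm:BS} together with the diagram \eqref{diag:sp} and \eqref{eq:mu} exhibits $\piab(X_v)^{\geo}\simeq T(X_v)_{G_{F_v}}\simeq T(X)_{G_{F_v}}$ as an extension of $A_0(\Xvbar)\simeq\Jvbar(\Fv)$ by the finite ramified part $\piab(X_v)^{\geo}_{\ram}$; since $\Jvbar(\Fv)$ is finite, so is $T(X)_{G_{F_v}}$.

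For part (ii) I first construct the maps. Because the target is killed by $m_X$, the map $\loc$ factors through $V(X)/m_X$, so I work with $V(X)/m_X\to\bigoplus_v V(X_v)/m_X$. By \autoref{thm:BS}(i) the kernel of $\tau_{X_v}$ is divisible, so $\tau_{X_v}$ induces an isomorphism $V(X_v)/m_X\isomto\piab(X_v)^{\geo}/m_X=T(X)_{G_{F_v}}/m_X$; as coinvariants commute with the reduction $T(X)/m_X=J[m_X]$, this equals $(J[m_X])_{G_{F_v}}$. Composing with the canonical surjection $T(X)_{G_{F_v}}\surj T(X)_{G_F}$ (induced by $G_{F_v}\subset G_F$, since the augmentation ideal only grows) gives $\Psi_v\colon V(X_v)/m_X\to T(X)_{G_F}$, and $\Psi:=\sum_v\Psi_v$ is the proposed second map; it is surjective because $T(X)_{G_{F_v}}\surj T(X)_{G_F}$ already for one good place $v$. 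That $\loc$ has image in the direct sum and that $\Psi\circ\loc=0$ are, respectively, an almost‑everywhere unramifiedness (for a symbol $\set{P,f}$ the boundary $\d_{X_v}(\set{P,f})=v_L(f)\,N(\Pbar)$ vanishes once $f$ is a $v$-unit, so the localisation is supported on finitely many $v$) and a product formula, the analogue of $\sum_v\inv_v=0$.

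The essential point is the exactness $\Ker\Psi=\Im\loc$. Note that $V(X_v)/m_X\simeq A_0(\Xvbar)/m_X$ is nonzero for a positive density of $v$, so the middle group is genuinely infinite and the claim is a strong approximation: every finite‑support family $(x_v)_v$ with $\sum_v\Psi_v(x_v)=0$ is a global localisation. I would prove this by global duality. Applying the Galois symbol \eqref{eq:sFp} ($\ell$-primarily for each $\ell\mid m_X$, with twisted coefficients $M:=J[m_X](1)=J[m_X]\otimes\mu_{m_X}$) one obtains compatible injections
\[
V(X)/m_X\inj H^2(F,M),\qquad V(X_v)/m_X\inj H^2(F_v,M),
\]
and then invokes the Poitou--Tate nine‑term sequence for the finite module $M$. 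A Weil‑pairing computation gives $M^D=\Hom(M,\mu_{m_X})\simeq J[m_X](-1)$, whence $H^0(F,M^D)^\vee\simeq(J[m_X])_{G_F}\simeq T(X)_{G_F}$, so the tail of that sequence reads
\[
H^2(F,M)\to\bigoplus_v H^2(F_v,M)\to T(X)_{G_F}\to 0,
\]
matching the target group on the nose.

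The step I expect to be hardest is to descend this Poitou--Tate sequence to the Galois‑symbol subgroups $V(X_v)/m_X\subseteq H^2(F_v,M)$: concretely, to show that under local Tate duality the image of the local Galois symbol is exactly the annihilator prescribed by the reciprocity map $\tau_{X_v}$ (a local computation in the spirit of \cite{RS00}, \cite{Yam05}), so that surjectivity onto $T(X)_{G_F}$ and exactness in the middle are inherited from the global sequence. The finiteness established in part (i) is what guarantees that the cokernel is the finite group $T(X)_{G_F}$ and that the product formula defining $\Psi$ is well posed.
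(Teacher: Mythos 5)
The paper does not prove this proposition at all: it is quoted verbatim from Kato--Saito \cite[Sect.~5, Prop.~5]{KS83b}, so there is no internal argument to compare yours against. Judged on its own terms, your sketch is a reasonable reduction strategy, and part (i) is essentially fine (Katz--Lang plus the good-reduction structure of $\piab(X_v)^{\geo}$). One small inaccuracy there: $\tau_{X_v}$ induces an \emph{injection} $V(X_v)/m_X\hookrightarrow\piab(X_v)^{\geo}/m_X$ with cokernel $(\Z/m_X)^r$ by \autoref{thm:BS}(iii), not an isomorphism in general ($r>0$ can occur at bad places); this does not obstruct defining $\Psi_v$, but you should not assert the isomorphism.

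The genuine gap is in part (ii), and it sits exactly where you flag it, but it is larger than a ``local computation in the spirit of \cite{RS00}, \cite{Yam05}.'' The Poitou--Tate tail gives exactness at $\bigoplus_v H^2(F_v,M)$, whereas the proposition asserts exactness at $\bigoplus_v V(X_v)/m_X$. Given $(x_v)$ with $\Psi((x_v))=0$, duality produces a class $\xi\in H^2(F,M)$ localizing to $(s_{F_v,p}(x_v))_v$, but nothing forces $\xi$ to lie in the image of the \emph{global} symbol $V(X)/m_X\hookrightarrow H^2(F,M)$, which is in general a proper subgroup. Proving that every global class which is everywhere locally a symbol is (modulo the stated cokernel) globally a symbol is a global Hasse-principle statement and is precisely the substance of Kato--Saito's theorem; no local argument at a single place can supply it. Two further compatibilities are asserted without proof and would each require real work: the product formula $\Psi\circ\loc=0$, and the identification of your reciprocity-theoretic map $\Psi_v$ with the Poitou--Tate connecting map $\bigoplus_v H^2(F_v,M)\to H^0(F,M^D)^\vee$ under local Tate duality. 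As it stands the proposal reduces the proposition to statements that are essentially equivalent to it, rather than proving it.
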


By composing the local boundary map \eqref{eq:locald}, we obtain the global boundary map  
\begin{equation}\label{eq:dX}
	\d_X\colon V(X)\xrightarrow{\loc} \prod_{v\in P(F)} V(X_v)\xrightarrow{\prod \d_{X_v}} \prod_{v\in \Sigma_\good(X)}\Jvbar(\F_v).
\end{equation} 
By the proof of \cite[Sect.~5, Prop.~5]{KS83b}, 
	the image of 
	\[
	V(X) \to \prod_{v\in P(F)}V(X_v)\xrightarrow{\prod \tau_{X_v}} \prod_{v\in P(F)}(T(X_v))_{G_{F_v}}
	\]
	is contained in the direct sum $\bigoplus_{v}T(X_v)_{G_{F_v}}$.
	Since the boundary map $\d_{X_v}$ factors through $\tau_{X_{v}}$ (cf.~\eqref{diag:sp}), 
	the image of $\dX$ is contained in the direct sum $\bigoplus_{v\in \Sigma_{\good}(E)}\Jvbar(\Fv)$.

\section{Elliptic curve}\label{sec:EC}
Let $E$ be an elliptic curve over a number field $F$. 
For any place $v\in P(F)$, we denote by $F_v$ the local field associated to $v$ (\Cf Notation) 
and put $E_v := E\otimes_F F_v$. 

\subsection*{A Hasse principle}
For a rational prime $p$, we consider the natural map 
\begin{equation}\label{def:loc}
	\locpbar \colon V(E)/p \to \prod_{v\in P(F)} V(E_v)/p.
\end{equation}

\begin{lem}\label{lem:Ram}
	The map $\locpbar$ in \eqref{def:loc} is injective, 
	and the image $\Im(\locpbar)$ is contained in 
	$\bigoplus_{v\in P(F)}V(E_v)/p$.
\end{lem}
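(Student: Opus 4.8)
The plan is to identify $\locpbar$ with the localization map in Galois cohomology through the Galois symbol of \eqref{eq:sFp}. Applying \eqref{eq:sFp} to $E$ (so that $J=E$) over $F$ and over each completion $F_v$, and using the compatibility of the Galois symbol with restriction to $F_v$, I would obtain a commutative square
\[
\xymatrix@C=3.5em{
V(E)/p \ar[r]^-{s_{F,p}} \ar[d]_{\locpbar} & H^2(F, E[p]\otimes\mu_p) \ar[d]^{\res} \\
\ds\prod_{v} V(E_v)/p \ar[r]^-{\prod_v s_{F_v,p}} & \ds\prod_{v} H^2(F_v, E[p]\otimes\mu_p)
}
\]
whose top arrow is injective by \eqref{eq:sFp}, whose bottom arrow is injective by the local analogue of the same statement, and whose right arrow $\res$ is the usual restriction to all completions.

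Injectivity of $\locpbar$ then becomes formal. If $x\in V(E)/p$ has $\locpbar(x)=0$, then commutativity gives $\res(s_{F,p}(x))=0$, so $s_{F,p}(x)$ lies in $\Sha^2(F, E[p]\otimes\mu_p)=\ker(\res)$; since $s_{F,p}$ is injective it then suffices to show this group vanishes. To do so I would invoke Poitou--Tate duality, which identifies $\Sha^2(F, E[p]\otimes\mu_p)$ with the dual of $\Sha^1(F, M^D)$, where $M^D=\Hom(E[p]\otimes\mu_p,\mu_p)\cong E[p]^\vee$ is the contragredient of $E[p]$. Because $\rho_{E,p}$ takes values in $\Aut(E[p])=GL_2(\Fp)$, so does the representation on $M^D$; hence $M^D$ is precisely of the form to which Ramakrishnan's Hasse principle \autoref{prop:Ram} applies, giving $\Sha^1(F, M^D)=0$ and therefore $\Sha^2(F, E[p]\otimes\mu_p)=0$.

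For the containment $\Im(\locpbar)\subseteq \bigoplus_v V(E_v)/p$, I would use the standard fact that a global class in $H^2(F, E[p]\otimes\mu_p)$ has trivial localization at all but finitely many places. Indeed, such a class is unramified outside a finite set $S$ of places (containing the archimedean places, the places above $p$, and the places of bad or ramified reduction), and for $v\notin S$ its image lies in the unramified subgroup $H^2_{\ur}(F_v, E[p]\otimes\mu_p)$, which vanishes since the residue field $\F_v$ has cohomological dimension $1$. Applying this to $s_{F,p}(x)$ and using injectivity of the bottom arrow of the square, the $v$-component of $\locpbar(x)$ vanishes for every $v\notin S$, which is the desired containment.

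The main obstacle is the middle step: correctly computing the Poitou--Tate dual $M^D$ and checking that it meets the hypotheses of \autoref{prop:Ram}, after which both injectivity and the $\Sha^2$-vanishing are automatic. A secondary point, needed only for the direct-sum assertion, is the local injectivity of the Galois symbol $s_{F_v,p}$ over the $p$-adic fields $F_v$, which I would deduce from the same source as \eqref{eq:sFp}.
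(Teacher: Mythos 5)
Your overall skeleton matches the paper's: both reduce injectivity of $\locpbar$ to injectivity of the localization map on $H^2(F,E[p](1))$ via the Galois symbol, both pass by global duality to the dual statement about $\Sha^1(F,E[p]^{\vee})$, and both ultimately rest on Ramakrishnan's Hasse principle (\autoref{prop:Ram}). The direct-sum assertion is also handled in essentially the same way (the paper cites Milne's lemma that the image of $\loc_p^2$ lands in the direct sum; your unramified-outside-$S$ argument is the proof of that lemma).

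However, there is a genuine gap at the step you describe as ``automatic.'' \autoref{prop:Ram} is a statement about the cohomology of a \emph{finite} group $G\subset GL_2(\Fp)$ and the restriction maps to its cyclic subgroups; it says nothing, by itself, about $\Sha^1(F,M)$ for the profinite group $G_F$. Writing ``$M^D$ is precisely of the form to which Ramakrishnan's Hasse principle applies, giving $\Sha^1(F,M^D)=0$'' conflates the two. To bridge them one must: (a) pass to the splitting field $K=F(E[p])$ and use the inflation--restriction sequence to split the problem into a piece over $\Gal(K/F)=\Im(\rho_{E,p})\subset GL_2(\Fp)$ and a piece $H^1(K,E[p]^{\vee})=\Hom(G_K,\Z/p)^{\oplus 2}$; (b) kill the latter using the classical fact that $\Hom(G_K,\Z/p)\to\prod_w\Hom(G_w,\Z/p)$ is injective; (c) invoke Chebotarev to realize every cyclic subgroup of $\Gal(K/F)$ as a decomposition group at an unramified place, so that injectivity into the product over decomposition groups follows from injectivity into the product over cyclic subgroups, which is what \autoref{prop:Ram} actually provides; and (d) conclude by the five lemma. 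These steps (a)--(d) constitute the substance of the paper's proof of this lemma, and your proposal omits all of them while locating the ``main obstacle'' elsewhere (in the computation of $M^D$, which is in fact the easy part).
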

\begin{proof} 
	By using the Somekawa $K$-group associated to $E$ and $\Gm$, 
	there is an isomorphism $V(E) \simeq K(F;E,\Gm)$ (\Cf\eqref{eq:Som}). 
	For any prime number $p$, the Galois symbol map 
	\begin{equation}\label{eq:Galsymb}
		s_{F,p}\colon V(E)/p\to H^2(F, E[p](1))
	\end{equation}
	and the local Galois symbol map 
	\[
	s_{F_v,p}\colon V(E_v)/p \hookrightarrow H^2(F_v,E_v[p](1))
	\]
	for $v\in P(F)$ are injective (\Cf\eqref{eq:sFp}). 
	There is a commutative diagram below:
	\begin{equation}\label{diag:s_F}
	\vcenter{
	\xymatrix{
	V(E)/p \ar[r]^-{\locpbar}\ar@{^{(}->}[d]_{s_{F,p}} &\displaystyle\prod_{v\in P(F)} V(E_v)/p \ar@{^{(}->}[d]^-{s_{F_v,p}} \\
	H^2(F, E[p](1)) \ar[r]^-{\loc^2_p} & \displaystyle\prod_{v\in P(F)} H^2(F_v, E_v[p](1)).
	}}
	\end{equation}
	Here, the bottom horizontal map $\loc^2_p$ is given by the restriction maps 
	on the Galois cohomology groups. 
	By the commutative diagram above, 
	the assertion is reduced to showing $\loc^2_p$ is injective. 
	By the Tate global duality theorem, 
	the kernel (which is denoted by $\Sha^2(F,E[p](1))$ in \cite{MilADT}) of the bottom horizontal map $\locpbar^2$ in the diagram above
	is the Pontrjagin dual of the kernel of 
	\[
	\loc^1_p\colon H^1(F, E[p](1)^D)\to \prod_{v\in P(F)} H^1(F_v, E_v[p](1)^D),
	\]
	where $E[p](1)^D := \Hom(E[p](1), (F^{\sep})^{\times}) \simeq \Hom(E[p],\Z/p) = E[p]^{\vee}$(\cite[Chap.~I, Thm.~4.10]{MilADT}, \cite[Chap.~VIII, Thm.~8.6.7]{NSW08}). 
	For the extension $K := F(E[p])$ of $F$, 
	the inf-res exact sequence (\cite[Chap.~I, Prop.~1.6.7]{NSW08}) gives a commutative diagram with left exact horizontal sequences:
	\[
	\xymatrix@C=4mm{
	H^1(K/F, (E[p]^{\vee})^{G_{K}})\ar@{^{(}->}[r]\ar[d]^{\loc_{K/F}^1} & H^1(F,E[p]^{\vee})\ar[r]\ar[d]^{\loc^1_p} & H^1(K,E[p]^{\vee})\ar[d]^{\loc_{K}^1}\\
	\displaystyle\prod_{v\in P(F)}\prod_{w\mid v} H^1( K_w/F_v, (E_v[p]^{\vee})^{G_{K_w}})\ar@{^{(}->}[r] &\displaystyle \prod_{v\in P(F)} H^1(F_v, E_v[p]^{\vee})\ar[r] &\displaystyle\prod_v\prod_{w\mid v}  H^1(K_w,E_v[p]^{\vee}),
	}
	\]
	where $w\mid v$ means that $w$ runs through the set of places of $K$ above $v \in P(F)$. 
	A basis $E[p] \subset E(K)$ as a $\Fp$-vector space 
	also provides a basis of $E_v[p]$. 
	Since $G_{K}$ acts on $E[p]^{\vee}$ trivially, we have 
	$H^1(K,E[p]^\vee) = \Hom(G_K,\Z/p)^{\oplus 2}$ 
	and $H^1(K_w,E_v[p]^{\vee}) = \Hom(G_{w},\Z/p)^{\oplus 2}$. 
	Since the natural map 
	\[
		\Hom(G_K,\Z/p)\to \prod_{w\in P(K)}\Hom(G_w,\Z/p)
	\]
	is injective (by \cite[Chap.~VI, Cor.~3.8]{Neu99}), 
	the right vertical map $\loc_K^1$ in the above diagram is injective. 
	
	Finally, we show that the left vertical map $\loc_{K/F}^1$ is injective. 
	Since $G_{K}$ acts on $E[p]^{\vee}$ trivially, 
	$(E[p]^{\vee})^{G_K} = E[p]^{\vee} =:M$.
	For each $v\in P(F)$ and $w\mid v$, by fixing the embeddings 
	\[
	\xymatrix{
	\ol{F} \ar@{^{(}->}[r] &  \ol{F}_v\, \\ 
	F\ar[r]\ar@{^{(}->}[u]\ar@{^{(}->}[r] & F_v,\ar@{^{(}->}[u]
	}
	\]
	there is an isomorphism 	$E[p] \simeq E_v[p]$ as $G_{F_v}$-modules. 
	Using the identification of $G_{F_v}$ and the decomposition subgroup at $v$ of $G_F$, 
	$(E_v[p]^\vee)^{G_{K_w}} \simeq (E[p]^{\vee})^{G_{K_w}} = M$ 
	as $\Gal(K_w/F_v)$-modules. 
	Moreover, the Galois group $\Gal(K_w/F_v)$ can be regarded as a cyclic subgroup 
	of $\Gal(K/F)$ if $w$ is an unramified place. 
	The extension $K/F$ corresponding to the kernel of the mod $p$ Galois 
	representation $\rho_{E,p}\colon G_F\to \Aut(E[p]) \simeq GL_2(\Fp)$. 
	We consider $G := \Gal(K/F)$ as a subgroup of $GL_2(\Fp)$ after fixing a basis of $E[p]$. 
	It is enough to show that the natural map 
	\[
	\loc_{G}\colon H^1(G, M) \to \prod_{D \subset G} H^1(D, M) 
	\]
	is injective, 
	where $D$ runs through the set of decomposition groups $\Gal(K_w/F_v)$ of $G$. 
	By the Chebotarev density theorem (\cite[Chap.~VII, Thm.~13.4]{Neu99}), 
	for any cyclic subgroup $C\subset G$, 
	one can find unramified place $w \in P(K)$ such that 
	$C$ is isomorphic to $\Gal(K_w/F_v)$. 
	Now, we apply the Hasse principle for a subgroup of $GL_2(\Fp)$ (see \autoref{prop:Ram} below) 
	to deduce that the map $\loc_G$ is injective.  	
	This implies that $\loc^1_p$ is injective by the five lemma  
	and so is $\locpbar$.
	
	For the image $\Im(\locpbar)$, the image is contained in $\bigoplus_{v\in P(F)} V(E_v)/p$ 
	because of the commutative diagram \eqref{diag:s_F} 
	and the image of $\loc_p^2\colon H^2(F,E[p](1)) \to \prod_v H^2(F_v,E_v[p](1))$ is contained in 
	the direct sum $\bigoplus_v H^2(F_v,E_v[p](1))$ (\cite[Chap.~I, Lem.~4.8]{MilADT}). 
\end{proof}

\begin{prop}[{\cite[Prop.~1.2.1]{Ram25}}]\label{prop:Ram}
	Let $G$ be a subgroup of $GL_2(\Fp)$. Then, for any $p$-primary $G$-module $M$, 
	the natural map 
	\[
	H^1(G,M) \to \prod_{C\subset G} H^1(C,M)
	\]
	is injective, 
	where $C$ runs the set of cyclic subgroups of $G$.
\end{prop}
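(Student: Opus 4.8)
The plan is to reduce the statement to a single, well-chosen cyclic subgroup via a restriction--corestriction argument, using the special feature that the $p$-Sylow subgroups of $GL_2(\Fp)$ are cyclic.

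First I would pass to a Sylow $p$-subgroup $P \subseteq G$. Since $M$ is a $p$-primary $G$-module, the cohomology group $H^1(G,M)$ is $p$-primary as well. The composite $\Cor^G_P \circ \Res^G_P$ is multiplication by the index $[G:P]$, which is prime to $p$ and hence acts as an automorphism on the $p$-primary group $H^1(G,M)$. Consequently the restriction map $\Res^G_P \colon H^1(G,M) \to H^1(P,M)$ is injective. This is the standard transfer argument and requires nothing special about $G$.

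The key observation is that $P$ is cyclic. Computing orders, $\#GL_2(\Fp) = (p^2-1)(p^2-p) = p(p-1)^2(p+1)$, so the $p$-part of $\#GL_2(\Fp)$ equals $p$ exactly. Therefore a Sylow $p$-subgroup of $GL_2(\Fp)$ has order $p$, and $P$ — being a $p$-subgroup of $G \subseteq GL_2(\Fp)$ — has order $1$ or $p$; in either case $P$ is cyclic. This is really where the hypothesis $G \subseteq GL_2(\Fp)$ (rather than an arbitrary finite group) enters, and it is what makes the Hasse principle hold.

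It then remains only to assemble these two facts. Since $P$ is cyclic, it is one of the subgroups $C$ indexing the target $\prod_{C \subseteq G} H^1(C,M)$. If $x \in H^1(G,M)$ lies in the kernel of the natural map, then in particular $\Res^G_P(x) = 0$, and the injectivity of $\Res^G_P$ forces $x = 0$. Hence the map is injective. The only substantive input is the order computation showing the Sylow $p$-subgroups of $GL_2(\Fp)$ are cyclic of order $p$; everything else is formal.
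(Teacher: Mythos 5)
Your argument is correct and complete. Note that the paper itself does not prove this proposition --- it is quoted from \cite{Ram25} as an external input --- so there is no internal proof to compare against; your route (restriction--corestriction to a Sylow $p$-subgroup $P\subseteq G$, plus the order count $\#GL_2(\Fp)=p(p-1)^2(p+1)$ showing the $p$-part is exactly $p$, so that $P$ is cyclic of order $1$ or $p$) is the standard transfer argument and fully establishes the statement as written. The only point worth making explicit is that $H^1(G,M)$ is $p$-primary even when the $p$-primary module $M$ is infinite: since $G$ is finite, the group of $1$-cochains $\mathrm{Map}(G,M)$ is already $p$-primary, so multiplication by the prime-to-$p$ index $[G:P]$ is an automorphism of $H^1(G,M)$ and the restriction to $P$ is injective as you claim.
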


%

\begin{thm}\label{thm:pdiv}
	If we have $E[p]_{G_F}\neq 0$, then 
	there is a short exact sequence
\begin{equation}
	\label{seq:Bp}
0 \to V(E)/p \xrightarrow{\locpbar}\bigoplus_{v\in P(F)} V(E_v)/p \to E[p]_{G_F} \to 0 
\end{equation}
\end{thm}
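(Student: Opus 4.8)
The plan is to handle the two ends of the claimed sequence separately. Exactness on the left—namely injectivity of $\locpbar$, together with the fact that its image lands in the direct sum $\bigoplus_{v} V(E_v)/p$ and not merely in the product—is precisely \autoref{lem:Ram}, so nothing new is needed there. The entire problem therefore reduces to identifying $\Coker(\locpbar)$ with $E[p]_{G_F}$.

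To compute this cokernel I would invoke the exact sequence of Bloch, \autoref{prop:KS}~(ii). Set $N := T(E)_{G_F}\simeq \piab(E)^{\geo}$ (finite by \autoref{prop:KS}~(i)), let $m := \#N$, and write $A := \bigoplus_v V(E_v)/m$. That proposition provides a surjection $\theta\colon A \twoheadrightarrow N$ whose kernel is the image of the localization map $\loc_m\colon V(E)\to A$. The key structural fact about $N$ is that $T(E)/pT(E)\simeq E[p]$ as $G_F$-modules, whence, $G_F$-coinvariants being right exact,
\[
N/pN \simeq (T(E)/pT(E))_{G_F}\simeq E[p]_{G_F};
\]
in particular the hypothesis $E[p]_{G_F}\neq 0$ forces $p\mid m=\#N$.

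Since $p\mid m$, reduction defines a surjection $\pi\colon A\twoheadrightarrow \bigoplus_v V(E_v)/p$ with kernel $pA$, and the localization maps are compatible: $\pi\circ\loc_m$ equals the composite $V(E)\twoheadrightarrow V(E)/p\xrightarrow{\locpbar}\bigoplus_v V(E_v)/p$. Hence $\Im(\locpbar)=\pi(\ker\theta)$, and as $\pi$ is surjective a short diagram chase gives
\[
\Coker(\locpbar)\simeq A/\bigl(\ker\theta+pA\bigr)\simeq N/pN\simeq E[p]_{G_F},
\]
where the middle isomorphism uses $\theta(pA)=pN$ and the right-hand one is the computation above. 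Combined with the left exactness from \autoref{lem:Ram}, this is exactly the desired short exact sequence.

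The one place demanding care—though ultimately routine—is the bookkeeping underlying the middle step: one must verify that the kernel of the mod-$m$-to-mod-$p$ reduction $\pi$ is precisely $pA$ (so that $\theta(\ker\pi)=pN$) and that $\loc_m$ and the map underlying $\locpbar$ are genuinely intertwined by $\pi$. All the substantive input—the injectivity of $\locpbar$, resting on Ramakrishnan's Hasse principle for subgroups of $GL_2(\Fp)$ (\autoref{prop:Ram})—has already been absorbed into \autoref{lem:Ram}, so what remains is essentially the reduction of the mod-$m$ Bloch sequence to its mod-$p$ shadow. As a cross-check, the $H^2$-diagram from the proof of \autoref{lem:Ram} together with Poitou--Tate duality identifies the cokernel of the bottom (Galois-cohomology) localization with $H^0(F,E[p]^{\vee})^{\ast}\simeq E[p]_{G_F}$, consistent with the value found above.
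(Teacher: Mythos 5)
Your proposal is correct and follows essentially the same route as the paper: left exactness is delegated to \autoref{lem:Ram}, and the cokernel is identified by reducing Bloch's sequence from \autoref{prop:KS}~(ii) modulo $p$, using $(T(E)_{G_F})/p\simeq E[p]_{G_F}$ to see that $p\mid m_E$ so that $\gcd(m_E,p)=p$. The paper phrases your explicit diagram chase simply as applying $-\otimes_{\Z}\Z/p$ to the right exact sequence, but the content is identical.
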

\begin{proof}
By \autoref{prop:KS}, there is a right exact sequence 
\[
 V(E) \xrightarrow{\loc} \bigoplus_{v\in P(F)} V(E_v)/m_E \to T(E)_{G_F}\to 0, 
\]
where $m_E = \# (T(E)_{G_F})$. 
Applying $-\otimes_{\Z} \Z/p$, 
the sequence  
\begin{equation}
	\label{seq:Bp2}
V(E)/p \to \bigoplus_{v\in P(F)} V(E_v)/\gcd(m_E,p) \to (T(E)_{G_F})/p \to 0 
\end{equation}
is exact, where $\gcd(m_E,p)$ means the greatest common divisor of $m_E$ and $p$. 
$(T(E)_{G_F})/p \simeq (T_p(E)_{G_F})/p$ and 
the short exact sequence 
\[
T_p(E) \xrightarrow{p}T_p(E) \xrightarrow{\mathrm{projection}} E[p]\to 0
\]
induces 
\[
T_p(E)_{G_F} \xrightarrow{p}T_p(E)_{G_F}\to E[p]_{G_F}\to 0.
\]
Therefore, 
\begin{equation}
\label{eq:TG/p}
(T_p(E)_{G_F})/p\simeq E[p]_{G_F}.
\end{equation}
The assumption $E[p]_{G_F}\neq 0$ 
implies that the $p$-primary part of the finite group $T_p(E)_{G_F}$ is non-trivial. 
We have $p\mid m_E$ and hence $\gcd(m_E,p) = p$. 
The exact sequence \eqref{seq:Bp} is left exact 
by \autoref{lem:Ram}. 
\end{proof}

\begin{rem}\label{rem:infty}
	We claim here that the $\Fp$-dimension of the target $\bigoplus_{v\in P(F)}V(E_v)/p$ 
	of the map $\locpbar$ is infinite. 
	Recall that 
	$\Sigma_{\mathrm{good}}(E)$ is the set of finite places $v$ of $F$ 
	such that $E$ has good reduction at $v$. 
	For a place $v\in \Sigma_{\good}(E)$ with $v\nmid p$, 
	the local boundary map $\d_{E_v,p}\colon V(E_v)/p\to \Evbar(\Fv)/p$ 
	is bijective (see the proof of \autoref{thm:fin} below) 
	and hence it is enough to show that the dimension of  
	\[
	\bigoplus_{v\in \Sigma_{\good}(E), v\nmid p}\Ebar(\Fv)/p
	\]
	is infinite. 
	The reduction map $\red_v\colon E_v[p] \xrightarrow{\simeq} \Evbar[p]$ is an isomorphism. 
	By the exact sequence 
	\[
	0\to \Evbar(\Fv)[p]\to \Evbar(\Fv)\xrightarrow{p} \Evbar(\Fv) \to \Evbar(\Fv)/p\to 0,
	\] 
	the equality 
	\[
	\dimFp(\Evbar(\Fv)/p) = \dimFp(\Evbar(\Fv)[p])
	\] 
	holds. 
	The latter $\Evbar(\Fv)[p] = \Evbar[p]^{G_{\Fv}}$ coincides with the eigenspace for eigenvalue 1 of $\rho_{E,p}(\Frob_v)$, 
	where $\rho_{E,p}\colon G_F\to \Aut(E[p])$ is the mod $p$ Galois representation associated to $E[p]$ 
	and  $\Frob_v$ is a Frobenius element at $v$. 
	
	Here, first we assume that 
	$E[p]\not\subset E(F)$ and 
	$v$ is completely split in the (non-trivial) extension $F(E[p])/F$, 
	then $\rho_{E,p}(\Frob_v)$ is the identity in $GL_2(\Fp)$ 
	and hence $\dimFp(\Evbar(\Fv)[p]) = 2$. 
	By the Chebotarev density theorem (\cite[Chap.~VII, Thm.~13.4]{Neu99}), 
	there are infinitely many places $v$ which is completely split in the extension 
	$F(E[p])/F$  
	
	In the case where $E[p] \subset E(F)$, 
	(we have $\dimFp(E[p]_{G_F}) = \dimFp(E[p]) =2$, see \autoref{lem:quot}) 
	the representation $\rho_{E,p}$ is trivial so that $\rho_{E,p}(\Frob_v)$ is the identity 
	for any place $v \in \Sigma_{\good}(E)$. 
	We also have $\dimFp(\Evbar(\Fv)[p]) = \dimFp(\Evbar(\Fv)/p) = 2$ 

	As a result, 
	\autoref{thm:pdiv} says $\dimFp(V(E)/p) = \infty$ if $E[p]_{G_F}\neq 0$. 
\end{rem}

For each prime $p$, the boundary map 
$\partial_E$ (defined in \eqref{eq:dX}) induces 
\begin{equation}\label{eq:tameE}
\dbar_{E,p} \colon V(E)/p \to \bigoplus_{v\in \Sigma_\good(E)}\ol{E_v}(\F_v)/p.
\end{equation}
For each good place $v\in \Sigma_\good(E)$, the local boundary map $\d_{E_v}$ 
for the base change $E_v$ gives  
\[
\dbar_{E_v,p}\colon V(E_v)/p \to \ol{E_v}(\Fv)/p.
\]

\begin{cor}\label{thm:fin}
Let $E$ be an elliptic curve over a number field $F$ and $p$ a rational prime. 
If we assume $E[p]_{G_F}\neq 0$, then  
there is an exact sequence 
	\begin{align*}	
0 \to \Ker(\dbar_{E,p})\to \bigoplus_{v\in \Sigma_\good(E),\, v\mid p}\Ker(\dbar_{E_v,p})&\oplus \bigoplus_{v \in \Sigma_\bad(E)}V(E_v)/p \oplus \bigoplus_{v\in P_{\infty}(F)\colon \mathrm{real}}V(E_v)/p  \\
	&\to E[p]_{G_F}\to \Coker(\dbar_{E,p})\to 0
	\end{align*}
of finite dimensional $\Fp$-vector spaces. 
Here, $v\mid p$ means that $v$ is a place of $F$ above $p$.
\end{cor}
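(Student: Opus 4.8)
The plan is to obtain the sequence from the short exact sequence of \autoref{thm:pdiv} by a single application of the snake lemma, after pinning down the local behaviour of the boundary maps $\dbar_{E_v,p}$. First I would abbreviate $A := V(E)/p$, $B := \bigoplus_{v\in P(F)} V(E_v)/p$ and $C := E[p]_{G_F}$, so that \autoref{thm:pdiv} reads $0 \to A \xrightarrow{\locpbar} B \to C \to 0$. With $D := \bigoplus_{v\in\Sigma_{\good}(E)} \ol{E_v}(\F_v)/p$, let $\Psi\colon B \to D$ be the homomorphism acting as $\dbar_{E_v,p}$ on the summand $V(E_v)/p$ for each $v\in\Sigma_{\good}(E)$ and as $0$ on the summands indexed by $v\in\Sigma_{\bad}(E)\cup P_{\infty}(F)$. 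By the definition of the global boundary map in \eqref{eq:dX} and \eqref{eq:tameE} we have $\dbar_{E,p} = \Psi\circ\locpbar$, so $\dbar_{E,p}$, $\Psi$ and the zero map $C\to 0$ constitute a morphism from $0\to A\to B\to C\to 0$ to the exact row $0\to D \xrightarrow{\operatorname{id}} D \to 0 \to 0$.

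The snake lemma then produces
\[
0 \to \Ker(\dbar_{E,p}) \to \Ker(\Psi) \to E[p]_{G_F} \to \Coker(\dbar_{E,p}) \to \Coker(\Psi) \to 0 .
\]
Since each local boundary map $\d_{E_v}$, and hence its reduction $\dbar_{E_v,p}$, is surjective (see \eqref{eq:locald}), the map $\Psi$ is surjective and $\Coker(\Psi)=0$; this already gives the four-term shape of the asserted sequence, and it remains to identify
\[
\Ker(\Psi) = \bigoplus_{v\in\Sigma_{\good}(E)}\Ker(\dbar_{E_v,p}) \oplus \bigoplus_{v\in\Sigma_{\bad}(E)} V(E_v)/p \oplus \bigoplus_{v\in P_{\infty}(F)} V(E_v)/p
\]
with the middle term in the statement.

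The core of the argument is the local analysis. For a good place $v\nmid p$ I would show $\dbar_{E_v,p}$ is bijective, so that these summands disappear from $\Ker(\Psi)$. From \eqref{eq:mu} we have $\Ker(\d_{E_v})/\Ker(\d_{E_v})_{\div} \simeq \piab(E_v)^{\geo}_{\ram}$; as the divisible part dies modulo $p$ while the quotient is finite, this gives $\Ker(\d_{E_v})/p \simeq \piab(E_v)^{\geo}_{\ram}/p$. Under good reduction the N\'eron--Ogg--Shafarevich criterion makes the reduction map an isomorphism on $T_\ell$ for every $\ell \neq \Char\F_v$, so the specialization map $\sp$ of \eqref{diag:sp} is an isomorphism on $\ell$-primary parts for such $\ell$, whence $\piab(E_v)^{\geo}_{\ram} = \Ker(\sp)$ is supported at the residue characteristic. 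As $v\nmid p$ this forces $\piab(E_v)^{\geo}_{\ram}/p = 0$, hence $\Ker(\d_{E_v})/p = 0$; feeding this into the right exact sequence obtained by applying $-\otimes\Z/p$ to $0 \to \Ker(\d_{E_v}) \to V(E_v) \xrightarrow{\d_{E_v}} \ol{E_v}(\F_v) \to 0$ yields $V(E_v)/p \isomto \ol{E_v}(\F_v)/p$, i.e.\ $\Ker(\dbar_{E_v,p})=0$. For an archimedean place $v$ the group $V(E_v)$ is $p$-divisible in the relevant cases (over $\C$ since $E(\C)$ and $\C^{\times}$ are divisible, and over $\R$ with $p$ odd since $\R^{\times}=(\R^{\times})^{p}$), so the complex summands vanish and only the real summands survive; this matches the middle term of the statement.

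Finally, for finiteness I would note that $E[p]_{G_F}$ is finite as a quotient of $E[p]$, and that for every finite place $v$ the group $V(E_v)/p$ is finite: by \autoref{thm:BS} the subgroup $\Ker(\tau_{E_v})$ is the maximal divisible subgroup of $V(E_v)$ and $\Im(\tau_{E_v})$ is finite, so $V(E_v)/p$ is a quotient of the finite group $\Im(\tau_{E_v})$; the finitely many archimedean contributions are finite as well. Since only finitely many places are bad, divide $p$, or are archimedean, $\Ker(\Psi)$ is a finite-dimensional $\Fp$-vector space, and the snake sequence then forces $\Ker(\dbar_{E,p})$ and $\Coker(\dbar_{E,p})$ to be finite-dimensional too. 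I expect the main obstacle to be precisely the local bijectivity at good places $v\nmid p$, namely controlling the $p$-part of the ramified quotient $\piab(E_v)^{\geo}_{\ram}$; once that is in hand, the remainder is formal snake-lemma bookkeeping.
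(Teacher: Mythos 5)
Your proposal is correct and follows essentially the same route as the paper's proof: the same commutative diagram built from \autoref{thm:pdiv} with the good-reduction boundary maps as the vertical arrows, the snake lemma, bijectivity of $\dbar_{E_v,p}$ at good $v\nmid p$, and vanishing of the complex contributions. The only differences are minor: you reprove the local bijectivity at good $v\nmid p$ via the specialization sequence \eqref{eq:mu} where the paper simply cites \cite[Prop.~2.29]{Blo81}, and you assert rather than justify the finiteness of $V(E_v)/p$ at real places when $p=2$ (the paper obtains this from the injectivity of the Galois symbol map into the finite group $H^2(F_v,E_v[p](1))$).
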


\begin{proof}
From the assumption $E[p]_{G_F}\neq 0$ and $(T_p(E)_{G_F})/p \simeq E[p]_{G_F}$ (\Cf \eqref{eq:TG/p}), 
we have $p\mid m_E$, where $m_E := \#(T(E))_{G_F}$. 
The exact sequence \eqref{seq:Bp} and the local boundary map 
$\dbar_{E_v,p}$ induce
a commutative diagram: 
\begin{equation}\label{diag:Bloch}
	\vcenter{
\xymatrix@C=5mm{
0\ar[r] & V(E)/p\ar[r]^-{\locpbar}\ar[d]^{\ol{\partial}_E} & \ds \bigoplus_{v\in P(F)}V(E_v)/p \ar[d]^{\oplus \ol{\partial}_{E_v,p}} \ar[r] & E[p]_{G_F} \ar[r] & 0\\
& \ds \bigoplus_{v\in \Sigma_\good(E)} \Evbar(\F_v)/p \ar@{=}[r] & \ds \bigoplus_{v\in\Sigma_\good(E)} \Evbar(\F_v)/p, 
}}
\end{equation}
where the right vertical map is defined by $\dbar_{E_v,p}$ for each $v\in \Sigma_{\good}(E)$  
and the $0$-map for the other places. 
For each $v\in \Sigma_{\good}(E)$ with $v\nmid p$, 
the local boundary map  
$\dbar_{E_v,p}\colon V(E_v)/p \xrightarrow{\simeq} \Evbar(\F_v)/p$ 
is known to be bijective (\cite[Prop.~2.29]{Blo81}).
By comparing the kernels of the vertical maps in the diagram \eqref{diag:Bloch}, 
the map $\locpbar$ induces an injective homomorphism  
\begin{equation}
	\label{eq:Ker2}
\Ker(\dbar_{E,p})\ \stackrel{\locpbar}{\hookrightarrow} \bigoplus_{v\in \Sigma_{\good}(E),\, v \mid p}\Ker(\dbar_{E_v,p})\oplus \bigoplus_{v \in \Sigma_\bad(E)}V(E_v)/p\oplus\bigoplus_{v\in P_{\infty}(F)}V(E_v)/p.
\end{equation}
The number of the direct summand on the right is finite. 
For any finite place $v \in P_{\fin}(F)$, 
the reciprocity map $V(E_v)/p \hookrightarrow T(E_v)_{G_{F_v}}/p\simeq E_v[p]_{G_{F_v}}$ 
is injective (\autoref{thm:BS}). 
This indicates $\dimFp(V(E_v)/p) \le 2$. 
For an infinite place $v\in P_{\infty}(F)$, 
the Galois symbol map  
\[
s_v\colon V(E_v)/p \hookrightarrow H^2(F_v,E_v[p](1))
\]
is injective 
and the latter Galois cohomology group is finite. 
In particular, $V(E_v)/p = 0$ when $v$ is complex. 

Applying the snake lemma to the diagram \eqref{diag:Bloch}, we obtain the required long exact sequence.
\end{proof}

\subsection*{Local components}
In the following, 
we investigate each component of the second term in the exact sequence appearing in \autoref{thm:fin}: 
\begin{itemize}[itemsep=-5pt]
	\item $\Ker(\dbar_{E_v,p})$ for $v\in \Sigma_\good(E)$ with $v\mid p$ (\autoref{lem:Yos}),\\
	 \item $V(E_v)/p$ for a real $v \in P_{\infty}(F)$ (\autoref{lem:real}), and \\
	\item $V(E_v)/p$ for $v \in \Sigma_\bad(E)$ (\autoref{lem:mult}).
\end{itemize}
As noted in the proof of \autoref{thm:fin}, 
the inequality $\dimFp(V(E_v)/p)\le 2$ holds for each $v\in P_{\fin}(F)$.

\begin{lem}\label{lem:Yos}
	Let $v \in \Sigma_\good(E)$ with $v\mid p$. 
	Put $e_v = e(F_v/\Qp)$ the absolute ramification index of the local field $F_v$. 
	
	\begin{enumerate}
		\item If $e_v < p-1$, then $\Ker(\dbar_{E_v,p}) = 0$. 
		\item If $E$ has good ordinary reduction at $v$, then 
		$\dimFp(\Ker(\dbar_{E_v,p})) \le 1$.
		\item If we assume $E_v[p] \subset E_v(F_v)$, then $\dimFp(\Ker(\dbar_{E_v,p}))= 2$.
		\end{enumerate} 
\end{lem}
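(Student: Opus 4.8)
The plan is to analyze the local boundary map $\dbar_{E_v,p}\colon V(E_v)/p \to \Evbar(\F_v)/p$ at a place $v \mid p$ of good reduction, and to understand its kernel via the class field theory machinery assembled in \autoref{sec:cft}. The key input is the commutative diagram \eqref{diag:sp}: the reciprocity map $\tau_{E_v}$ identifies $V(E_v)$ modulo its maximal divisible subgroup with $\piab(E_v)^{\geo}$, and the local boundary map $\d_{E_v}$ factors through $\tau_{E_v}$, with $\Ker(\d_{E_v})$ mapping isomorphically (modulo divisibles) onto the ramified part $\piab(E_v)^{\geo}_{\ram}$ via \eqref{eq:mu}. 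So the first step is to reduce the computation of $\Ker(\dbar_{E_v,p})$ to understanding $\piab(E_v)^{\geo}_{\ram}/p$, or equivalently the $p$-part of the ramified quotient of the geometric fundamental group. Since $\Ker(\d_{E_v})$ is an extension of a divisible group by the finite group $\piab(E_v)^{\geo}_{\ram}$, taking mod $p$ one gets $\Ker(\dbar_{E_v,p}) \cong \piab(E_v)^{\geo}_{\ram}/p$ up to a contribution from the divisible part, so the whole problem becomes a computation of this finite ramified fundamental group modulo $p$.

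For the three cases I would invoke the $p$-adic analysis of the boundary map due to Yoshida (the reference \cite{Yos02} flagged in the text), which describes $\piab(E_v)^{\geo}_{\ram}$ in terms of the formal group of $E$ at $v$ and the ramification of $F_v$. For part (i), when $e_v < p-1$, the formal group $\widehat{E}$ over $\O_{F_v}$ has no nontrivial $p$-torsion in its group of $\O_{F_v}$-points precisely because the usual estimate on the valuation of $p$-torsion points of a formal group (the condition $e_v/(p-1) < 1$ forcing the formal logarithm to be an isomorphism onto its image without $p$-torsion) applies; this kills the ramified part mod $p$, giving $\Ker(\dbar_{E_v,p}) = 0$. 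For part (ii), good ordinary reduction means the formal group has height one, so its torsion contributes at most a one-dimensional $\F_p$-space after reduction, bounding $\dimFp(\Ker(\dbar_{E_v,p})) \le 1$. For part (iii), the hypothesis $E_v[p] \subset E_v(F_v)$ forces the full $p$-torsion, including the kernel of reduction, to be rational, so both the formal-group contribution and the étale contribution are present, yielding exactly dimension $2$; here one can also argue via the Galois symbol $s_v\colon V(E_v)/p \hookrightarrow H^2(F_v, E_v[p](1))$ together with local duality and local Euler characteristic computations, since $E_v[p] \subset E_v(F_v)$ makes $E_v[p](1) \cong \mu_p^{\oplus 2}$ as a $G_{F_v}$-module, pinning down the relevant dimensions exactly.

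The step I expect to be the main obstacle is establishing the exact dimension $2$ in part (iii) rather than merely an upper bound. The bound $\dimFp(V(E_v)/p) \le 2$ is already recorded in the proof of \autoref{thm:fin} via the reciprocity map into $E_v[p]_{G_{F_v}}$, and showing $\Ker(\dbar_{E_v,p})$ is at least as large requires exhibiting enough genuinely ramified classes, i.e.\ producing two independent elements of $V(E_v)/p$ that die under $\d_{E_v}$. The cleanest route will be to pass to the Galois-cohomological side: under $E_v[p] \subset E_v(F_v)$, compute $H^2(F_v, E_v[p](1)) = H^2(F_v,\mu_p)^{\oplus 2} \cong (\Z/p)^{\oplus 2}$ by local duality, and then identify the image of $s_v$ with the unramified part of $\Evbar(\F_v)/p$ via the commutativity of the Galois symbol with the boundary map, so that the kernel of $\dbar_{E_v,p}$ accounts for the full two-dimensional cohomology group. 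Matching the formal-group description (Yoshida) with this cohomological count is the delicate bookkeeping, but both approaches should converge on the same answer, and I would present whichever gives the exact equality most transparently.
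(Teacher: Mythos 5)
Your reduction of the problem to the ramified part $\piab(E_v)^{\geo}_{\ram}$ via the diagram \eqref{diag:sp} and the isomorphism \eqref{eq:mu} is exactly the paper's starting point, and your part (i) is essentially the paper's proof: the paper quotes Yoshida's theorem that the $p$-primary part of $\piab(E_v)^{\geo}_{\ram}$ vanishes when $e_v<p-1$, which is the precise form of your formal-group-has-no-$p$-torsion heuristic. One small caution even here: $\Ker(\dbar_{E_v,p})$ is a priori only a \emph{quotient} of $\Ker(\d_{E_v})/p$ (the Tor term $\Evbar(\F_v)[p]$ maps into $\Ker(\d_{E_v})/p$), so you should phrase the reduction as a surjection $\Ker(\d_{E_v})/p\twoheadrightarrow\Ker(\dbar_{E_v,p})$ rather than an isomorphism "up to a contribution from the divisible part"; this is harmless for the vanishing in (i) and the upper bound in (ii), but it matters for the exact equality in (iii).

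For (ii) and (iii) the paper does not argue from first principles at all: it cites \cite[Cor.~4.1]{GH23} for the surjection $\Z/p\twoheadrightarrow\Ker(\d_{E_v})/p\twoheadrightarrow\Ker(\dbar_{E_v,p})$ in the ordinary case, and \cite[Thm.~5.9]{GH23} together with $\Ker(\d_{E_v})/p\simeq V(E_v)/p$ for the equality $\dimFp(\Ker(\dbar_{E_v,p}))=2$ in case (iii). Your substitutes are heuristics rather than proofs. In (ii), the passage from "the formal group has height one" to "$\piab(E_v)^{\geo}_{\ram}/p$ is at most one-dimensional" is precisely the content of the cited corollary and needs the structural description of the ramified fundamental group in terms of the connected part of the $p$-divisible group; it does not follow from the height alone. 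The genuine gap is in (iii): local duality and the Euler characteristic formula only compute the \emph{target} $H^2(F_v,\mu_p^{\oplus 2})\simeq(\Z/p)^{\oplus 2}$ of the Galois symbol, not its image, and the injective map $s_{F_v,p}$ gives only the upper bound $\dimFp(V(E_v)/p)\le 2$ that is already recorded in the proof of \autoref{thm:fin}. Producing two independent classes in $V(E_v)/p$ that die under $\d_{E_v}$ (equivalently, surjectivity of the Galois symbol plus vanishing of the boundary mod $p$) is exactly the hard point; you correctly identify it as the main obstacle but leave it open, whereas the paper closes it by citation. As written, your argument establishes (i), gives a plausible but unproved (ii), and does not establish (iii).
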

\begin{proof}
	(i) 
	Recall that the local boundary map $\d_{E_v}\colon V(E_v)\to \ol{E_v}(\Fv)$ 
	is surjective. 
	It is known that the $p$-primary part of $\piab(E)^{\geo}_{\ram}$ is trivial 
	if $e_v<p-1$ (\cite[Thm.~4.1]{Yos02}). 
	By the class field theory for $E_v$ (\Cf\eqref{eq:mu}), 
	the reciprocity map induces $\Ker(\d_{E_v})/p \simeq \piab(E)^{\geo}_{\ram}/p$ 
	and hence $\Ker(\dbar_{E_v,p}) = 0$. 
	
	\noindent
	(ii) By \cite[Cor.~4.1]{GH23}, 
	there are surjective homomorphisms 
	\[
	\Z/p \twoheadrightarrow \Ker(\d_{E_v})/p \twoheadrightarrow \Ker(\dbar_{E_v,p}).
	\]
	The inequality $\dimFp(\Ker(\dbar_{E_v,p}))\le 1$ holds. 

	\noindent
	(iii) By \cite[Thm.~5.9]{GH23} and $\Ker(\d_{E_v})/p \simeq V(E_v)/p$, 
	we have  
	$\dim_{\Fp}(\Ker(\d_{E_v})/p) = 2$.
\end{proof}

\begin{lem}\label{lem:real}
	Let $v\in P_{\infty}(F)$ be a real place. 
	
	\begin{enumerate}
		\item 
	If $p>2$, then 
	$V(E_v)/p = 0$. 
	\item 
	For $p=2$, 
	we have 
	\[
	\dim_{\F_2}(V(E_v)/2) \le \begin{cases}
		1,& \mbox{if $\Delta(E_v)<0$},\\
		2,&  \mbox{if $\Delta(E_{v})>0$},
	\end{cases}
	\]
	where $\Delta(E_{v})$ is the discriminant of $E_{v}$.
	\end{enumerate}

\end{lem}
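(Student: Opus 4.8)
The plan is to reduce the whole statement to a computation of Galois cohomology over $\R$ via the injective local Galois symbol map $s_v\colon V(E_v)/p \hookrightarrow H^2(F_v, E_v[p](1))$ employed in the proofs of \autoref{lem:Ram} and \autoref{thm:fin}. Since $v$ is a real place, $F_v = \R$ and $G_{F_v} = \Gal(\C/\R) \simeq \Z/2\Z =: \langle\sigma\rangle$, so the target $H^2(\R, E_v[p](1))$ is the cohomology of a cyclic group of order $2$ acting on the $p$-primary module $E_v[p](1)$. Bounding $\dim_{\Fp} V(E_v)/p$ by $\dim_{\Fp} H^2(\R, E_v[p](1))$, it then suffices to estimate the latter in each case.

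For part (i), I would invoke the standard fact that for a finite group $G$ the cohomology $H^i(G, M)$ with $i \ge 1$ is annihilated by $|G|$. Here $|G_{F_v}| = 2$, while $E_v[p](1)$ is $p$-primary with $p$ odd, so multiplication by $2$ is an automorphism of $E_v[p](1)$ and hence of $H^2(\R, E_v[p](1))$; being simultaneously killed and invertible, this group vanishes. Injectivity of $s_v$ then forces $V(E_v)/p = 0$.

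For part (ii), take $p = 2$. First note that $\mu_2 = \{\pm 1\} \subset \R$ has trivial $G_{F_v}$-action, so $E_v[2](1) = E_v[2]\otimes\mu_2 \simeq E_v[2]$ as $G_{F_v}$-modules. As $G_{F_v}$ is cyclic of order $2$, its cohomology is periodic and $H^2(\R, E_v[2]) \simeq \widehat{H}^0(G_{F_v}, E_v[2]) = E_v[2]^{G_{F_v}}/(1+\sigma)E_v[2]$, which is a quotient of the invariants $E_v[2]^{G_{F_v}} = E_v(\R)[2]$. Combining with $s_v$ gives
\[
\dim_{\F_2} V(E_v)/2 \;\le\; \dim_{\F_2} H^2(\R, E_v[2]) \;\le\; \dim_{\F_2} E_v(\R)[2].
\]
It remains to read $\dim_{\F_2} E_v(\R)[2]$ off the sign of $\Delta(E_v)$: writing $E_v$ in Weierstrass form $y^2 = f(x)$ with $f$ cubic, the discriminant sign detects whether $f$ has three real roots ($\Delta(E_v) > 0$, so all nontrivial $2$-torsion is real and $\dim_{\F_2} E_v(\R)[2] = 2$) or a single real root ($\Delta(E_v) < 0$, whence $\dim_{\F_2} E_v(\R)[2] = 1$). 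These two cases yield the asserted bounds $\le 2$ and $\le 1$.

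I do not expect a serious obstacle here: the cohomology-of-$\Z/2\Z$ bookkeeping and the discriminant–real-root dictionary are routine, and the only points requiring care are the identification of the twist $E_v[2](1)$ with $E_v[2]$ and the inequality $\dim_{\F_2}\widehat{H}^0 \le \dim_{\F_2} E_v(\R)[2]$. Note this argument produces only the upper bounds in the statement; since $\widehat{H}^0$ can be a proper quotient of the invariants, I would not expect it to determine the exact dimension of $V(E_v)/2$.
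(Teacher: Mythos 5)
Your proposal is correct, and part (ii) essentially reproduces the paper's argument: the paper also feeds $V(E_v)/2$ through the Galois symbol map into $H^2(\R,E_{\R}[2](1))$, identifies this with a Tate cohomology group of $\Gal(\C/\R)$, bounds it by $\dim_{\F_2}E_{\R}(\R)[2]$ (the paper passes through $\wh{H}^1\subset E_{\R}[2]_{\Gal(\C/\R)}$ and the exact sequence relating invariants and coinvariants, where you use $\wh{H}^0$ as a quotient of the invariants --- an equivalent and slightly more direct bookkeeping), and then reads off the answer from the sign of the discriminant via the structure of $E(\R)$. For part (i) your route is genuinely different from the paper's: the paper does not use the symbol map there, but instead observes that $\res\colon V(E_\R)/p\to V(E_\C)/p$ followed by corestriction is multiplication by $2$, hence bijective for odd $p$, and that $V(E_\C)$ is uniquely divisible by Raskind's result \cite[Lem.~1.1]{Ras90}, so $V(E_\C)/p=0$. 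Your version instead pushes everything into $H^2(\R,E_v[p](1))$ and kills it as a $p$-primary group annihilated by $\lvert\Gal(\C/\R)\rvert=2$; this makes the two parts of the lemma uniform and avoids invoking Raskind over $\C$, at the cost of relying on the injectivity of the local Galois symbol map at a real place --- which the paper does accept and use elsewhere (in the proof of \autoref{thm:fin} and in part (ii) here), so this is a legitimate substitution. Both arguments are at bottom the same transfer principle, and both are complete.
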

\begin{proof}
(i) 
The composition 
\[
V(E_\R)/p\xrightarrow{\res} V(E_\C)/p \xrightarrow{\Cor} V(E_\R)/p
\] 
is $[\C\colon \R] = 2$ and hence bijective. 
Since 
$V(E_\C)$ is uniquely divisible (\cite[Lem.~1.1]{Ras90}), 
we have $V(E_\R)/p = 0$. 

\noindent
(ii) 
The target of the Galois symbol map 
$V(E_{\R})/2\hookrightarrow H^2(\R,E_{\R}[2](1))$ 
is isomorphic to $H^1(\R,E_{\R}[2])$. 
The Tate cohomology group gives
$H^1(\C/\R,E_{\R}[2]) \simeq \wh{H}^1(\Gal(\C/\R), E_{\R}[2]) \subset E_{\R}[2]_{\Gal(\C/\R)}$ 
(\Cf\cite[Prop.~1.7.1]{NSW08}, \cite[Sect.~2]{Blo81}). 
The complex conjugation $\sigma \in \Gal(\C/\R)$ induces a short exact sequence 
\[
0 \to E_{\R}(\R)[2]\to E_{\R}[2] \xrightarrow{\sigma} E_{\R}[2] \to E_{\R}[2]_{\Gal(\C/\R)}  \to 0
\]
%
%
The assertion follows from 
\[
\dim_{\F_2}(V(E_v)/2)\le \dim_{\F_2}(E_{\R}[2]_{\Gal(\C/\R)}) = \dim_{\F_2}(E_{\R}(\R)[2])
\] 
and the structure theorem 
\[
E(\R)\simeq \begin{cases}
	\R/\Z, & \mbox{if $\Delta(E_{\R})<0$},\\
	\R/\Z\oplus \Z/2, & \mbox{if $\Delta(E_{\R})>0$}
\end{cases}
\]
(\cite[Chap.~V, Cor.~2.3.1]{Sil151}).
\end{proof}

\begin{lem}\label{lem:mult}
Let $K$ be a finite extension of the rational $l$-adic field $\Q_l$ 
with residue field $\F_K$, and 
$p$ a prime number. 
Let $E_K$ be an elliptic curve over $K$ which has split multiplicative reduction.  
\begin{enumerate}
	\item We have $\dim_{\F_p} (V(E_K)/p) \le 1$. 
	\item We suppose one of the conditions below: 
	\begin{enumerate}[label=$\mathrm{(\alph*)}$]
	\item $l = p$ and the ramification index satisfies $e_{K/\Qp} < p-1$.
	\item $l\neq p$ and $p\nmid (\#\FK-1)$. 
	\end{enumerate}
	Then, $V(E_K)/p = 0$.
	\item 
	Assume that the extension $K/\Q_l$ is abelian.  
	Put $M_K =\max\set{m |\mu_m \subset K}$ 
	and 
	\[
	M(E_K) =\frac{M_K}{\#(q_K,K^{\times})_{M_K}},
	\]
	where $q_K\in K^{\times}$ is the Tate parameter such that $E(\ol{K})\simeq \ol{K}^{\times}/q_K^{\Z}$ and 
	\[
	(-,-)_{M_K}\colon K^\times \times K^{\times} \to \mu_{M_K}
	\] 
	is the Hilbert symbol. 
	Then, 
	\[
	\dim(V(E_K)/p) = 
	\begin{cases}
	1, & \mbox{if $p\mid M(E_K)$},\\
	0, & \mbox{otherwise}.
	\end{cases}
	\]
\end{enumerate}
\end{lem}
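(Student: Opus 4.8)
The plan is to reduce everything to a computation in the Milnor $K$-group $K_2^M(K)$ by means of the Tate uniformization. Since $E_K$ has split multiplicative reduction it is a Tate curve: there is a Tate parameter $q_K\in K^\times$ with $v_K(q_K)>0$ and, for every finite extension $L/K$, a Galois- and norm-compatible isomorphism $E_K(L)\cong L^\times/q_K^\Z$. This exhibits the Mackey functor $L\mapsto E_K(L)$ as a quotient of $L\mapsto \Gm(L)=L^\times$, and by functoriality of the Somekawa construction together with the identification $K(K;\Gm,\Gm)\cong K_2^M(K)$ and the isomorphism $V(E_K)\cong K(K;E_K,\Gm)$ from \eqref{eq:Som}, I obtain a surjection
\[
\psi\colon K_2^M(K)\twoheadrightarrow V(E_K).
\]
Because a point of $E_K(L)$ depends only on its class modulo $q_K^\Z$, the projection formula $N_{L/K}\{q_K,f\}_L=\{q_K,N_{L/K}f\}_K$ shows $\ker\psi\supseteq\langle\{q_K,f\}:f\in K^\times\rangle=:\{q_K,K^\times\}$; I expect equality, giving $V(E_K)\cong K_2^M(K)/\{q_K,K^\times\}$.

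Granting the surjection $\psi$, parts (i) and (ii) are immediate. Tensoring with $\Z/p$ yields a surjection $K_2^M(K)/p\twoheadrightarrow V(E_K)/p$, and it is classical that $K_2^M(K)/p\cong H^2(K,\mu_p^{\otimes 2})$ is cyclic: by local Tate duality its $\Fp$-dimension equals $\dimFp H^0(K,\mu_p^{\otimes -1})$, which is $1$ if $\mu_p\subset K$ and $0$ otherwise. This proves $\dimFp(V(E_K)/p)\le 1$, i.e.\ (i). For (ii) it suffices to check $\mu_p\not\subset K$ under either hypothesis. In case (a), $\mu_p\subset K$ would force $p-1=e(\Qp(\mu_p)/\Qp)\mid e_{K/\Qp}$, contradicting $e_{K/\Qp}<p-1$; in case (b), since $l\neq p$ the group $\mu_p(K)$ is nontrivial only if $p\mid \#\FK-1$. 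In both cases $K_2^M(K)/p=0$, hence $V(E_K)/p=0$.

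For (iii) I would use the full description $V(E_K)/p\cong (K_2^M(K)/p)/\img(\{q_K,K^\times\})$. When $\mu_p\not\subset K$ (equivalently $p\nmid M_K$, whence $p\nmid M(E_K)$ as $M(E_K)\mid M_K$) this group is $0$. When $\mu_p\subset K$, the $p$-th Hilbert symbol gives an isomorphism $K_2^M(K)/p\isomto\mu_p$, $\{a,b\}\mapsto(a,b)_p$, carrying $\img(\{q_K,K^\times\})$ to the subgroup $(q_K,K^\times)_p\subseteq\mu_p$; thus $V(E_K)/p\cong\mu_p/(q_K,K^\times)_p$, which is $1$-dimensional exactly when $(q_K,-)_p$ is the trivial pairing and $0$ otherwise. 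Finally I translate this into the stated numerical criterion via the compatibility $(q_K,f)_p=(q_K,f)_{M_K}^{M_K/p}$: the image $(q_K,K^\times)_{M_K}$, a cyclic group of order $d=\#(q_K,K^\times)_{M_K}$, lies in the kernel of $\zeta\mapsto\zeta^{M_K/p}$ (a subgroup of $\mu_{M_K}$ of order $M_K/p$) if and only if $d\mid M_K/p$, i.e.\ $p\mid M_K/d=M(E_K)$. This yields the dichotomy in (iii).

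The hard part will be the exact identification $\ker\psi=\{q_K,K^\times\}$, which is needed only for (iii): the inclusion $\supseteq$ is formal, but the reverse inclusion requires controlling all Weil-reciprocity relations defining $K(K;E_K,\Gm)$ through the uniformization, and this is presumably where the hypothesis that $K/\Q_l$ be abelian enters (to make the reciprocity relations and the Hilbert-symbol computation transparent). For (i) and (ii) only the surjectivity of $\psi$ and the cyclicity of $K_2^M(K)/p$ are required, so those parts are robust; the remaining routine points are the standard compatibility between the $p$-th and $M_K$-th Hilbert symbols and the elementary divisibility bookkeeping in the last step.
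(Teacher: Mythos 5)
Your overall strategy---pulling everything back to $K_2^M(K)$ through the Tate uniformization---is the right idea in spirit, and your endgame is correct: the cyclicity of $K_2^M(K)/p\simeq H^2(K,\mu_p^{\otimes 2})$, the two case checks that $\mu_p\not\subset K$ under (a) and (b), and the divisibility bookkeeping translating triviality of $(q_K,-)_p$ into $p\mid M(E_K)$ all match what the statement needs. But the paper does not prove these facts from scratch: it quotes [Hir22, Prop.~2.1] for the statement that $V(E_K)/V(E_K)_{\div}$ is finite cyclic of order dividing $M_K$ (which yields (i) and (ii) at once, since $V(E_K)/p\simeq (V(E_K)/V(E_K)_{\div})/p$), and [Asa06, Thm.~1.2] for the isomorphism $V(E_K)/V(E_K)_{\div}\simeq \mu_{M_K}/(q_K,K^\times)_{M_K}$ in the abelian case (which yields (iii)). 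What you are attempting is essentially a re-proof of those two quoted results, and that is where the gaps lie.

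Concretely: first, the surjection $\psi\colon K_2^M(K)\twoheadrightarrow V(E_K)$ does not follow from ``functoriality of the Somekawa construction,'' because the Tate uniformization $L^\times/q_K^{\Z}\simeq E_K(L)$ is a rigid-analytic, not algebraic, isomorphism. It gives a norm- and Galois-compatible surjection on points of finite extensions of $K$, but $K(K;E_K,\Gm)$ is cut out by Weil-reciprocity relations involving points of $E_K$ over function fields of curves, where no such uniformization is available; verifying that the symbol-level map kills those relations is a genuine piece of work (it is part of the content of the quoted results), and it is already needed for (i) and (ii), which are therefore not as ``robust'' as you claim. Second, for (iii) you explicitly leave the identification $\Ker(\psi)=\{q_K,K^{\times}\}$ (equivalently, the injectivity of the induced map modulo $p$) as something you ``expect''; that identification is precisely Asakura's theorem and is the only nontrivial input to (iii), so it cannot be left open. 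If you replace these two steps by citations to [Hir22, Prop.~2.1] and [Asa06, Thm.~1.2], the remainder of your argument is correct and coincides with the paper's.
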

\begin{proof}
(i) 
Put 
$M_K =\max\set{m |\mu_m \subset K}$.
It is known that the group 
$V(E_K)/V(E_K)_{\div}$ is finite and cyclic of order $M_K^{\ast}$ with $M_K^\ast \mid M_K$ 
(\cite[Prop.~2.1]{Hir22}, see also \cite[Thm.~1.2]{Asa06} for the case where $K/\Ql$ is abelian). 
We obtain $\dim_{\F_p} (V(E_K)/p) \le 1$. 

\sn 
(ii) 
\textbf{Case (a)}: $l =p$ and $e_{K/\Qp} < p-1$. 
If we assume $V(E_K)/p \neq 0$, then $p\mid M_{K}^{\ast} = \# (V(E_K)/V(E_K)_{\div})$. 
This implies $p\mid M_K$, and hence $\mu_p \subset K$. 
Since the extension $\Q_p(\mu_p)/\Qp$ is totally ramified extension of degree $p-1$ (\cite[Chap.~IV, Sect.~4, Prop.~17]{Ser68}), $e_{K/\Qp}\ge p-1$. 
This contradicts the assumption $e_{K/\Qp}<p-1$. 

\sn 
\textbf{Case (b)}: $l\neq p$, $p\nmid (\#\F_K-1)$. 
By \cite[Chap.~IV, Sect.~4, Cor.~1]{Ser68}, 
$[K(\mu_p):K]  = \min\set{r |(\#\F_{K})^r\equiv 1 \bmod p}$. 
This implies 
$[K(\mu_p):K]>1$ and $p\nmid M_{K}$. 
Since there is a surjective homomorphism 
$\Z/M_K\surj V(E_K)/V(E_K)_{\div}$
(\cite[Prop.~2.1]{Hir22}), $V(E_K)/p =0$. 

\sn
(iii) 
By  \cite[Thm.~1.2]{Asa06}, there is an isomorphism 
\[
V(E_K)/V(E_K)_{\div} \simeq \mu_{M_K}/(q_K,K^{\times})_{M_K}
\]
which is a cyclic group of order $M(E_K) = M_K/\#(q_K,K^{\times})_{M_K}$. 
The assertion follows from this. 
\end{proof}

\begin{lem}\label{lem:nonsp}
Let $K$ be a finite extension of the rational $l$-adic field $\Q_l$ 
with residue field $\F_K$, and 
$p$ an odd prime number. 
Let $E_K$ be an elliptic curve over $K$ which has non-split multiplicative reduction.  
\begin{enumerate}
	\item We have $\dim_{\F_p} (V(E_K)/p) \le 1$. 
	
	\item We suppose one of the conditions below: 
	\begin{enumerate}[label=$\mathrm{(\alph*)}$]
	\item $l = p$ and the ramification index satisfies $e_{K/\Qp} < p-1$.
	\item $l\neq p$ and $p \nmid ((\#\F_K)^2-1)$.
	\end{enumerate}
	Then, $V(E_K)/p = 0$.
\end{enumerate}
\end{lem}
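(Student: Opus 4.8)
The plan is to reduce the non-split case to the split case already settled in \autoref{lem:mult} by passing to an unramified base change. Let $K'/K$ be the unique unramified quadratic extension, so that its residue field $\F_{K'}$ satisfies $\#\F_{K'} = (\#\F_K)^2$ and its absolute ramification index satisfies $e_{K'/\Qp} = e_{K/\Qp}$. The essential geometric input is the standard fact from the theory of the Tate curve that an elliptic curve with non-split multiplicative reduction is the twist of a Tate curve by the unramified quadratic character of $\Gal(K'/K)$; consequently $E_{K'} := E_K\otimes_K K'$ acquires \emph{split} multiplicative reduction over $K'$ (\Cf\cite{Sil151}). Hence \autoref{lem:mult} applies to $E_{K'}$.

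Next I would exploit the restriction and corestriction maps on $V$, exactly as in the proof of \autoref{lem:real}~(i). The composition
\[
V(E_K)/p \xrightarrow{\res} V(E_{K'})/p \xrightarrow{\Cor} V(E_K)/p
\]
is multiplication by $[K':K] = 2$. Since $p$ is odd, $2$ is invertible in $\Fp$, so this composite is an isomorphism and $\res$ is injective. Thus $V(E_K)/p$ embeds into $V(E_{K'})/p$, and every upper bound or vanishing statement for the latter transfers to the former.

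Finally I would read off the three conclusions from \autoref{lem:mult} applied to the split curve $E_{K'}$. For (i), \autoref{lem:mult}~(i) gives $\dim_{\Fp}(V(E_{K'})/p)\le 1$, whence $\dim_{\Fp}(V(E_K)/p)\le 1$. For (ii)(a), the hypothesis $e_{K/\Qp}<p-1$ is preserved by the unramified extension, so $e_{K'/\Qp}<p-1$ and \autoref{lem:mult}~(ii)(a) yields $V(E_{K'})/p = 0$, hence $V(E_K)/p = 0$. For (ii)(b), the condition $p\nmid((\#\F_K)^2-1)$ is exactly $p\nmid(\#\F_{K'}-1)$, so \autoref{lem:mult}~(ii)(b) gives $V(E_{K'})/p = 0$ and again $V(E_K)/p = 0$. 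The only point requiring genuine care, and hence the main obstacle, is the justification that non-split multiplicative reduction becomes split precisely over the unramified quadratic extension, together with the compatibility of the $\res$/$\Cor$ formalism for the Somekawa-type groups $V(E_K)\simeq K(K;E_K,\Gm)$ with this base change; both are standard but deserve an explicit reference.
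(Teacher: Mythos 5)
Your proposal is correct and follows essentially the same route as the paper: pass to the unramified quadratic extension over which the reduction becomes split, use that $\res$ followed by the norm is multiplication by $2$ (hence $\res$ is injective for odd $p$), and then quote \autoref{lem:mult} for the split curve over $K'$. Your version merely spells out the bookkeeping ($e_{K'/\Qp}=e_{K/\Qp}$ and $\#\F_{K'}=(\#\F_K)^2$) that the paper leaves implicit.
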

\begin{proof}
There exists an unramified quadratic extension $K'/K$ 
such that the base change $E_{K'} = E_K\otimes_K K'$ has split multiplicative reduction 
(\cite[Thm.~5.3]{Sil151}, \cite[Appendix C, Thm.~14.1]{Sil106}).
The composition of the restriction and the norm map gives 
\begin{equation}
\label{eq:resV}
	V(E_K)/p \xrightarrow{\res} V(E_{K'})/p \xrightarrow{N_{K'/K}} V(E_K)/p
\end{equation}
is the multiplication by $[K':K] = 2$. 
For $p>2$, the restriction $\res$ above is injective. 
The assertions (i) and (ii) follow from \autoref{lem:mult}. 
\end{proof}

%

\subsection*{Mod \texorpdfstring{$p$}{p} Galois representations}
Finally, 
we study the third component $E[p]_{G_F}$ in the exact sequence of \autoref{thm:fin}.
Recall that 
the $G_F$-coinvariant quotient is given by $E[p]_{G_F} = E[p]/I(E[p])$, 
where 
$I(E[p])$ is the subgroup of $E[p]$ generated by elements of the form $\sigma P - P$ 
for $\sigma \in G_F$ and $P\in E[p]$. 
By using a classification of the image $\Im(\rho_{E,p})$ 
of the mod $p$ Galois representation 
\[
\rho_{E,p}\colon G_F\to \Aut(E[p])
\]
associated to $E[p]$, 
we investigate $E[p]_{G_F}$.

\begin{lem}\label{lem:surj}
	Assume that there exists a basis of $E[p]$ such that 
	the image of $\rho_{E,p}\colon G_F\to \Aut(E[p])\simeq GL_2(\Fp)$ contains $SL_2(\Fp)$.
	Then, $E[p]_{G_{F}} =0$. 
\end{lem}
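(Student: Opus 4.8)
The plan is to reduce the vanishing of the coinvariants to the irreducibility of the natural two-dimensional module over $SL_2(\Fp)$. Write $M = E[p]$, a two-dimensional $\Fp$-vector space, and recall from the definition that $E[p]_{G_F} = M/I(M)$, where $I(M)$ is generated by the elements $\sigma P - P$ for $\sigma \in G_F$ and $P \in M$. Since $\rho_{E,p}$ factors through $G := \Im(\rho_{E,p})$, one has $I(M) = \langle gP - P : g \in G,\ P \in M\rangle$, and it suffices to prove $I(M) = M$. First I would restrict attention to the subgroup $SL_2(\Fp) \subseteq G$ (using the chosen basis) and consider the subspace $J \subseteq I(M)$ generated by the elements $gP - P$ with $g \in SL_2(\Fp)$ and $P \in M$. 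A short computation, $h(gP - P) = (hgh^{-1})(hP) - hP$, shows that $J$ is stable under $SL_2(\Fp)$, i.e.\ $J$ is an $SL_2(\Fp)$-submodule of $M$.

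The key point is then that the natural action of $SL_2(\Fp)$ on $M \simeq \Fp^2$ is \emph{irreducible} for every prime $p$. I would verify this by a transitivity count: the stabilizer in $SL_2(\Fp)$ of a nonzero vector (say the first basis vector) is the group of upper unipotent matrices, of order $p$, so the orbit of any nonzero vector has length $\#SL_2(\Fp)/p = p(p^2-1)/p = p^2 - 1 = \#(M \ssm \{0\})$. Thus $SL_2(\Fp)$ acts transitively on $M \ssm \{0\}$, and $M$ can have no proper nonzero submodule.

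Finally I would combine these: $J$ is an $SL_2(\Fp)$-submodule of the irreducible module $M$, so $J = 0$ or $J = M$. The case $J = 0$ would force $gP = P$ for all $g \in SL_2(\Fp)$ and all $P$, i.e.\ a trivial action; but $SL_2(\Fp)$ acts nontrivially on $M$ (for instance the matrix $\left(\begin{smallmatrix}1&1\\0&1\end{smallmatrix}\right)$ does not fix the second basis vector). Hence $J = M$, and since $J \subseteq I(M) \subseteq M$ we conclude $I(M) = M$, that is $E[p]_{G_F} = M/I(M) = 0$.

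The argument is elementary once irreducibility is available, and the only point that needs genuine care is establishing irreducibility of the natural module over $SL_2(\Fp)$ uniformly in $p$ — in particular for $p = 2$, where $SL_2(\F_2) \cong S_3$ and one cannot invoke a characteristic-zero representation-theoretic statement. I would settle this purely by the orbit-counting argument above, which is insensitive to the characteristic.
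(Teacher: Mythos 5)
Your proof is correct, but it takes a different route from the paper's. The paper argues directly: it picks the two elementary unipotent matrices $\left(\begin{smallmatrix}1&0\\1&1\end{smallmatrix}\right)$ and $\left(\begin{smallmatrix}1&1\\0&1\end{smallmatrix}\right)$ in $SL_2(\Fp)$, lifts them to elements $\sigma,\tau\in G_F$, and observes that $\sigma P-P=Q$ and $\tau Q-Q=P$ already exhibit both basis vectors as elements of $I(E[p])$, so $I(E[p])=E[p]$ in two lines. You instead show that the subgroup $J$ generated by the $SL_2(\Fp)$-displacements is a submodule, prove irreducibility of the natural module by the orbit count $\#SL_2(\Fp)/p=p^2-1$, and conclude $J=M$ from nontriviality of the action. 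Both arguments are complete and work uniformly in $p$ (including $p=2$, which you rightly flag); the paper's is shorter and entirely computation-free of representation theory, while yours isolates the conceptual reason the lemma holds (irreducibility of the natural module) and would generalize verbatim to any subgroup of $GL_2(\Fp)$ acting irreducibly and nontrivially, not just those containing $SL_2(\Fp)$. The only cosmetic remark is that your final nontriviality check already uses the same unipotent matrix the paper's proof is built on, so in the end the two proofs lean on the same elements of $SL_2(\Fp)$.
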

\begin{proof}
	Take a basis $\set{P,Q}$ of $E[p]$ and identify $\Aut(E[p]) \simeq GL_2(\Fp)$. 
	Corresponding to $\pmat{1 & 0 \\ 1 & 1}, \pmat{1 & 1 \\ 0 & 1} \in SL_2(\Fp)$, 
	there exist $\sigma, \tau \in G_F$ such that 
	$\sigma P = P+Q, \sigma Q = Q$ and $\tau P = P, \tau Q = P+Q$. 
	Then $\sigma P-P = Q, \tau Q -Q = P$ imply $P,Q \in I(E[p])$. Hence, $E[p] = I(E[p])$.
\end{proof}

\begin{lem}\label{lem:even}
For the even prime $p=2$, we have 
\[
\dim_{\F_2}(E[2]_{G_F}) = \begin{cases}
	0 & \mbox{if $E(F)[2]=0$},\\
	1 & \mbox{if $E(F)[2]\neq 0$ and $\Delta(E)\not\in F^2$},\\
	2 & \mbox{if $E(F)[2] \neq 0$ and $\Delta(E)\in F^2$}.
\end{cases}
\]
\end{lem}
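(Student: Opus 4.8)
The plan is to reduce the computation of the coinvariants $E[2]_{G_F}$ to that of the invariants $E[2]^{G_F}=E(F)[2]$, and then to read off $\dim_{\F_2}E(F)[2]$ from the factorization of the $2$-division polynomial together with the class of $\Delta(E)$ modulo squares.

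First I would exploit the self-duality of $E[2]$. The Weil pairing $E[2]\times E[2]\to\mu_2$ is a nondegenerate alternating $G_F$-equivariant pairing, and since $\mu_2$ carries the trivial $G_F$-action and is isomorphic to $\F_2$, it furnishes an isomorphism $E[2]\isomto E[2]^{\vee}$ of $\F_2[G_F]$-modules. For any finite $G_F$-module $M$ one has $(M_{G_F})^{\vee}\simeq (M^{\vee})^{G_F}$ (a functional on $M$ kills the subgroup generated by the $\sigma m-m$ precisely when it is $G_F$-fixed), whence
\[
\dim_{\F_2}E[2]_{G_F}=\dim_{\F_2}(E[2]^{\vee})^{G_F}=\dim_{\F_2}E[2]^{G_F}=\dim_{\F_2}E(F)[2].
\]
I expect this to be the one step that needs genuine care: for $p=2$ there is no formal reason for coinvariants and invariants to have equal dimension, and the identification rests squarely on the Weil-pairing self-duality (which is what keeps $\mu_2$ from being a nontrivial twist). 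As a sanity check one can instead classify the image $G=\img(\rho_{E,2})$ among the subgroups of $S_3$ and compute $E[2]_G$ directly in each of the four cases; this reproduces the same dimensions, but the duality argument dispatches the passage uniformly.

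Next I would compute $\dim_{\F_2}E(F)[2]$ explicitly. Writing $E$ in the form $y^2=f(x)$ with $f$ a monic cubic over $F$ (possible since $\Char F=0$), the nonzero $2$-torsion points are exactly $(\theta,0)$ for the roots $\theta$ of $f$. Hence $E(F)[2]=0$ precisely when $f$ has no root in $F$, while $\dim_{\F_2}E(F)[2]$ equals $1$ or $2$ according as $f$ has exactly one root in $F$ or splits completely over $F$. The permutation action of $G_F$ on the three roots identifies $\Gal(F(E[2])/F)$ with a subgroup $G\subseteq S_3$ under the standard isomorphism $\Aut(E[2])\simeq GL_2(\F_2)\simeq S_3$.

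Finally I would bring in the discriminant. Since $\Delta(E)$ and $\operatorname{disc}(f)$ differ by the square factor $16$ (and a change of Weierstrass model alters $\Delta(E)$ by a twelfth power, so its class in $F^{\times}/(F^{\times})^2$ is well defined), the condition $\Delta(E)\in F^2$ is equivalent to $\sqrt{\operatorname{disc}(f)}\in F$, i.e.\ to $G\subseteq A_3$. If $E(F)[2]=0$ then $\dim_{\F_2}E[2]_{G_F}=0$ and nothing more is needed. If $f$ has a root in $F$, then $G$ lies in the stabilizer of that root, a transposition subgroup of $S_3$ whose intersection with $A_3$ is trivial; therefore $\Delta(E)\in F^2$ forces $G=1$, so $f$ splits and $\dim_{\F_2}E(F)[2]=2$, whereas $\Delta(E)\notin F^2$ forces $G$ to be exactly the transposition fixing that root, giving a unique rational root and $\dim_{\F_2}E(F)[2]=1$. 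Assembling the three cases yields the stated formula.
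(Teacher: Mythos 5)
Your proof is correct, but it takes a genuinely different route from the paper's. The paper proves \autoref{lem:even} by classifying the image of $\rho_{E,2}$ inside $GL_2(\F_2)\simeq S_3$ (via \autoref{lem:surj} when the representation is surjective, and \cite[Prop.~2.1]{RV01} otherwise, which pins the image down to the trivial group, a transvection, or the cyclic group of order $3$) and then computing the subgroup $I(E[2])$ generator by generator in each case. You instead reduce coinvariants to invariants: the duality $(M_{G_F})^{\vee}\simeq (M^{\vee})^{G_F}$ combined with the Weil-pairing self-duality $E[2]\simeq E[2]^{\vee}$ (valid because $\mu_2$ is a trivial $G_F$-module in characteristic $0$) gives $\dim_{\F_2}E[2]_{G_F}=\dim_{\F_2}E(F)[2]$, and you then read off $\dim_{\F_2}E(F)[2]$ from the rational roots of the $2$-division cubic together with the class of $\Delta(E)$ in $F^{\times}/(F^{\times})^{2}$, which is an elementary reproof of the input the paper takes from \cite{RV01}. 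Your argument is more self-contained and makes it transparent \emph{why} the answer is exactly $\dim_{\F_2}E(F)[2]$; you also correctly identify the delicate point, since for odd $p$ the cyclotomic twist destroys the self-duality and invariants and coinvariants genuinely differ (compare case $(\mathrm{B}_p)$ in \autoref{lem:quot}, where $E(F)[p]=0$ but $E[p]_{G_F}\neq 0$). The paper's route has the advantage of being uniform with its treatment of odd $p$ in \autoref{lem:quot}, where the same duality $(E[p]_{G_F})^{\vee}\simeq(E[p]^{\vee})^{G_F}$ is used but the twist by $\chi_p^{-1}$ must be tracked explicitly. Both arguments are complete and yield the stated trichotomy.
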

\begin{proof}
First, we consider the case $E(F)[2]=0$.
By \autoref{lem:surj}, we may assume that $\rho_{E,2}$ is not surjective. 
By \cite[Prop.~2.1]{RV01}, 
for some basis $\set{P,Q} $ of $E[2]$, 
the image of $\rho_{E,2}$ is generated by 
$\pmat{0 & 1 \\ 1 & 1}$, the cyclic subgroup of order $3$. 
Corresponding to $\pmat{0 & 1 \\ 1 & 1}$ and $\pmat{0 & 1 \\ 1 & 1}^2 = \pmat{1 & 1 \\ 1 & 0}$, 
there exist $\sigma,\tau \in G_F$ such that 
$\sigma P = Q, \sigma Q = P+Q$ and 
$\tau P = P+Q, \tau Q = P$.   
Therefore, $P = \sigma Q  - Q$ and $Q = \tau P - P$ are in $I(E[2])$. 
We obtain $E[2] = I(E[2])$. 

Next, we consider the case where $E(F)[2]\neq 0$ and $\Delta(E)\not\in F^2$. 
In this case, 
there is a basis $\set{P,Q} $ of $E[2]$, 
such that 
the image of $\rho_{E,2}$ coincides with 
the cyclic subgroup of order 2 generated by 
$\pmat{1 & 1 \\ 0 & 1}$ (\cite[Prop.~2.1]{RV01}). 
There is $\sigma\in G_F$ such that $\sigma P = P$ and $\sigma Q = P + Q$. 
We have $P\not \in I(E[2])$ while $Q\in I(E[2])$, hence $\dim_{\F_2}(E[2]_{G_F}) = 1$. 

Finally, suppose $E(F)[2]\neq 0$ and $\Delta(E)\in F^2$. 
By \cite[Prop.~2.1]{RV01} again, 
the image of $\rho_{E,2}$ is trivial so that $I(E[2]) = 0$.
\end{proof}

For an odd prime $p$, we consider the following conditions:
\begin{enumerate}
	\item[($\mathrm{SC}_p)$] $\dim_{\Fp}(E(F)[p]) = 1$, and $E$ has more than one $F$-isogeny of degree $p$.
	\item[($\mathrm{B}'_p)$] $\dim_{\Fp}(E(F)[p]) = 1$, and $E$ has only one $F$-isogeny of degree $p$.
	\item[$(\mathrm{B}_p)$] $E(F)[p]=0$ and there exists an $F$-isogeny $\phi\colon E'\to E$ of degree $p$ with $E'(F)[p] \neq 0$.
\end{enumerate}
The first condition $(\mathrm{SC}_p)$ indicates that 
the image of $\rho_{E,p}$ is split Cartan, 
and in the other cases $(\mathrm{B}_p)$ and $(\mathrm{B}'_p)$, 
the image of $\rho_{E,p}$ is contained in a Borel subgroup in the sense of \cite[Sect.~2]{Ser72}. 


\begin{lem}\label{lem:quot}
Let $p$ be an odd prime.

\begin{enumerate}
	\item 
Assume $\mu_p\not\subset F$. 
Then 
\[
\dim_{\Fp}(E[p]_{G_F})= \begin{cases} 
0,& \mbox{if $(\mathrm{B}'_p)$},\\ 
1,& \mbox{if $(\mathrm{SC}_p)$ or $(\mathrm{B}_p)$ holds},\\
2,& \mbox{if $E[p]\subset E(F)$}.
\end{cases}
\]
\item 
Assume $\mu_p\subset F$. 
Then 
\[
\dim_{\Fp}(E[p]_{G_F})= \begin{cases} 
1,& \mbox{if $(\mathrm{B}'_p)$, $(\mathrm{SC}_p)$ or $(\mathrm{B}_p)$ holds},\\
2,& \mbox{if $E[p]\subset E(F)$}.
\end{cases}
\]

\end{enumerate}

\end{lem}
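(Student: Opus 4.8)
The plan is to compute $E[p]_{G_F} = E[p]/I(E[p])$ in each case by first pinning down the image $G := \Im(\rho_{E,p}) \subset GL_2(\Fp)$ up to conjugacy, choosing a convenient basis of $E[p]$, and then determining the subgroup $I(E[p])$ generated by the elements $\sigma P - P$. The same recipe that proved \autoref{lem:surj} and \autoref{lem:even} applies: for each generator $g$ of $G$ and each basis vector $v$, the vectors $gv - v$ span $I(E[p])$, so the coinvariant dimension is $2 - \dim_{\Fp} I(E[p])$. The novelty for odd $p$ is that whether $\mu_p \subset F$ enters through the determinant: by the Weil pairing, $\det \circ \rho_{E,p}$ is the mod $p$ cyclotomic character $\chi_p \colon G_F \to \Fp^\times$, and $\mu_p \subset F$ is equivalent to $\chi_p$ being trivial, i.e.\ $G \subset SL_2(\Fp)$.

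First I would treat the Borel/Cartan cases. Under $(\mathrm{B}'_p)$ or $(\mathrm{B}_p)$ the image lies in a Borel subgroup, so in a suitable basis $\{P,Q\}$ every element of $G$ is upper triangular $\left(\begin{smallmatrix} a & b \\ 0 & d\end{smallmatrix}\right)$; the line $\langle P\rangle$ is the $G$-stable subgroup corresponding to the degree-$p$ isogeny. Under $(\mathrm{SC}_p)$ the image is contained in a split Cartan (diagonal) subgroup, with both coordinate lines stable. In all these cases $gP - P = (a-1)P$ and $gQ - Q = bP + (d-1)Q$, so $I(E[p])$ is controlled by which of the scalars $a-1$, $d-1$ can be made nonzero as $g$ ranges over $G$, together with the off-diagonal entries $b$. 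The condition $E(F)[p] \neq 0$ (cases $(\mathrm{SC}_p)$, $(\mathrm{B}'_p)$) forces one diagonal character to be trivial, i.e.\ fixes a rational line, while $E(F)[p]=0$ (case $(\mathrm{B}_p)$) means neither diagonal character is trivial. The crux is then a counting of how the two diagonal characters $a$ and $d$ can be trivial or nontrivial subject to the determinant constraint $ad = \chi_p$.

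The key mechanism is exactly the determinant relation. When $\mu_p \not\subset F$, the character $\chi_p$ is nontrivial, so $a$ and $d$ cannot both be trivial, which is what distinguishes the cases: under $(\mathrm{B}'_p)$ the rational line is $\langle P\rangle$ (so $a=1$) but then $d = \chi_p$ is nontrivial, making $Q$ recoverable as $gQ - Q$ for suitable $g$ and forcing $P \in I(E[p])$ as well once the off-diagonal entry is exploited, giving $E[p]_{G_F}=0$; under $(\mathrm{SC}_p)$ one line is rational and the other is not, yielding dimension $1$; under $(\mathrm{B}_p)$ neither coordinate vector is fixed but the unique stable line again produces dimension $1$. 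When $\mu_p \subset F$, we have $\chi_p$ trivial, so $G \subset SL_2(\Fp)$ and $ad = 1$; now if one diagonal character is trivial the other is automatically trivial too, so in cases $(\mathrm{SC}_p)$, $(\mathrm{B}'_p)$, $(\mathrm{B}_p)$ the relevant diagonal action collapses and exactly one dimension survives, giving dimension $1$ uniformly. The case $E[p] \subset E(F)$ means $\rho_{E,p}$ is trivial, so $I(E[p]) = 0$ and the dimension is $2$ regardless.

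I expect the main obstacle to be the bookkeeping in case $(\mathrm{B}'_p)$ with $\mu_p \not\subset F$, where one must argue carefully that although there is only one $F$-isogeny (one $G$-stable line), the coinvariant quotient nonetheless vanishes: the point is that the second diagonal character $d = \chi_p$ is nontrivial, so $Q$ lies in $I(E[p])$, and then any element with nonzero off-diagonal entry $b$ throws $P = b^{-1}\big((gQ-Q) - (d-1)Q\big)$ into $I(E[p])$ as well. Establishing that such a $g$ with $b \neq 0$ exists requires knowing the image is not merely diagonal in this basis — one must use that under $(\mathrm{B}'_p)$ the image is a genuinely non-split Borel (strictly upper triangular part nonzero), as opposed to $(\mathrm{SC}_p)$ where it is diagonal. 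Distinguishing $(\mathrm{SC}_p)$ from $(\mathrm{B}'_p)$ at the level of the matrix group, and tying it correctly to the number of degree-$p$ isogenies, is the delicate step; the rest is the routine linear algebra of $\sigma P - P$ already rehearsed in \autoref{lem:surj} and \autoref{lem:even}.
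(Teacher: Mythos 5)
Your proposal is correct and follows essentially the same route as the paper: both rest on the explicit description of $\Im(\rho_{E,p})$ in the split Cartan and Borel cases (the paper cites \cite[Prop.~1.2, Prop.~1.4]{RV01} for the matrix forms) together with the Weil-pairing relation $\det\circ\rho_{E,p}=\chi_p$, followed by the linear algebra of the cases. The only difference is presentational — you compute $I(E[p])$ and the coinvariants directly, while the paper dualizes and computes $(E[p]^\vee)^{G_F}$ — and the subtlety you flag in case $(\mathrm{B}'_p)$ (that the image must contain an element with nonzero off-diagonal entry) is exactly the point the paper's appeal to the non-split Borel shape of the image is supplying.
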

\begin{proof}
	First, we consider the case $E[p]\subset E(F)$. 
	Since $\rho_{E,p}$ is trivial, $I(E[p]) = 0$ and hence $\dimFp(E[p]_{G_F}) = \dim(E[p]) = 2$. 

	Next, we suppose $\dimFp(E(F)[p])\le 1$. 
	By the Weil pairing, $\det(\rho_{E,p}(\sigma)) = \chi_p(\sigma)$ 
	for all $\sigma\in G_F$, where $\chi_p:G_F\to \Aut(\mu_p)=\Fp^{\times}$ is the mod $p$ cyclotomic character. 
	By \cite[Prop.~1.2, Prop.~1.4]{RV01}, there exists a basis $\set{P,Q}$ of $E[p]$ 
	such that  
	\begin{equation}\label{eq:Imrho}		
	\Im(\rho_{E,p}) = 
		\begin{cases}
		\pmat{1 & 0 \\ 0 & \Im(\chi_p)} & \mbox{if $(\mathrm{SC}_p)$ holds},\\
		\pmat{1 & \ast \\ 0 & \Im(\chi_p)} & \mbox{if $(\mathrm{B}'_p)$ holds}, \\
		\pmat{\Im(\chi_p) & \ast \\ 0 & 1} & \mbox{if $(\mathrm{B}_p)$ holds},
		\end{cases}
	\end{equation}
	through the isomorphism $\Aut(E[p]) \simeq GL_2(\Fp)$. 
		
%
%
%

	By considering the dual representation $\rho_{E,p}^\vee$ 
	and $(E[p]_{G_F})^{\vee}\simeq (E[p]^\vee)^{G_F}$ (\cite[Chap.~II, Thm.~2.6.9]{NSW08}), 
	we determine the dimension of the $G_F$-invariant space $(E[p]^\vee)^{G_F}$. 
	Note that 
	the action of $\sigma \in G_F$ on $E[p]^{\vee}$ is given by the contragredient matrix $(\rho_{E,p}(\sigma^{-1}))^T$ with respect to 
	the dual basis $\set{\phi_P,\phi_Q}$ for $E[p]^{\vee}$ of the basis $\set{P,Q}$. 

	\sn 
	\textbf{Case $(\mathrm{SC}_p)$}:
	We consider the case $(\mathrm{SC}_p)$. 
	As $\rho_{E,p}$ is non-trivial, so is $\chi_p$. 
	By \eqref{eq:Imrho}, for any $\sigma \in G_F$, 
	we have $\sigma\phi_P = \phi_P$ and $\sigma\phi_Q = \chi_p^{-1}(\sigma)\phi_Q$. 
	This implies $(E[p]^{\vee})^{G_F}$ is generated by $\phi_P$ and hence $\dimFp((E[p]^\vee)^{G_F}) = \dimFp(E[p]_{G_F}) = 1$. 
	
	\sn 
	\textbf{Case $(\mathrm{B}_p)$}: 
	We assume the condition $(\mathrm{B}_p)$. 
	For any $\sigma\in G_F$, 
	we have $\sigma\phi_P = \chi_p^{-1}(\sigma)\phi_P +a \phi_Q$ for some $a \in \Fp$ 
	and $\sigma \phi_Q = \phi_Q$ so that 
	$\dimFp((E[p]^\vee)^{G_F}) = \dimFp(E[p]_{G_F}) = 1$. 

	\sn 
	\textbf{Case $(\mathrm{B}'_p)$}:
	We suppose $(\mathrm{B}'_p)$. 
	If $\mu_p\subset F$, then $\chi_p$ is trivial. 
	For any $\sigma \in G_F$, $\sigma \phi_P = \phi_P + a \phi_Q$ for some $a\in \Fp$ 
	and $\sigma \phi_Q  = \phi_Q$. We obtain $\dimFp(E[p]_{G_F}) = 1$.
	Consider the case $\mu_p\not\subset F$. 
	For any $\sigma \in G_F$, $\sigma \phi_P = \phi_P + a \phi_Q$ for some $a\in \Fp$ 
	and $\sigma \phi_Q  = \chi_p^{-1}(\sigma)\phi_Q$. 
	This implies $(E[p]^{\vee})^{G_F} = 0$ and hence $\dimFp(E[p]_{G_F}) = 0$.
\end{proof}

\section{Elliptic curve over \texorpdfstring{$\Q$}{Q}}
\label{sec:ECQ}
In this section, the kernel $\Ker(\dbar_{E,p})$ and the cokernel $\Coker(\dbar_{E,p})$ are examined in more detail by applying the main results of the previous section to the case $F=\Q$.
Until the end of this note, let $E$ be an elliptic curve defined over $\Q$. 


\begin{lem}\label{lem:SCp}

\begin{enumerate}
	\item 
	If we assume that $(\mathrm{SC}_p)$ holds for $E$ and some odd prime $p$, then 
	the map 
	\[
	\dbar_{E,p}\colon V(E)/p\to  \bigoplus_{l\in \Sigma_\good(E)}\ol{E_l}(\F_l) /p
	\] 
	is surjective. 
	
	\item If we assume $E(\Q)[2]\neq 0$ and $\Delta(E)\in \Q^2$, 
	then $\dbar_{E,2}$ is surjective. 
	\end{enumerate}
\end{lem}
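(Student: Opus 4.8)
The plan is to prove surjectivity by showing $\Coker(\dbar_{E,p})=0$. By \autoref{thm:fin}, the snake lemma applied to \eqref{diag:Bloch} exhibits $\Coker(\dbar_{E,p})$ as the image of a surjective connecting homomorphism $\partial\colon E[p]_{G_\Q}\to\Coker(\dbar_{E,p})$; concretely, for $\bar R\in E[p]_{G_\Q}$ one lifts $\bar R$ along the surjection $\bigoplus_v V(E_v)/p\surj E[p]_{G_\Q}$ of \eqref{seq:Bp} to some $b=(b_v)_v$, and then $\partial(\bar R)=\big[(\dbar_{E_v,p}(b_v))_v\big]$. So it suffices to produce, for each $\bar R$ in a generating set of $E[p]_{G_\Q}$, a lift $b$ whose image under $\oplus\dbar_{E_v,p}$ already lies in $\Im(\dbar_{E,p})$. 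The feature common to both hypotheses is that the rational $p$-torsion generates $E[p]_{G_\Q}$: under $(\mathrm{SC}_p)$ one has $E[p]_{G_\Q}=\Fp\bar P$ with $P\in E(\Q)[p]$ by \autoref{lem:quot}, and for $p=2$ with $E(\Q)[2]\neq0$, $\Delta(E)\in\Q^2$ one has $E[2]_{G_\Q}=E[2]=E(\Q)[2]$ by \autoref{lem:even}. Hence I may take $\bar R$ to be the class of an honest point $R\in E(\Q)[p]$.

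Given such $R$, I would first choose, by the Chebotarev density theorem, a place $l_0\in\Sigma_\good(E)$ with $l_0\nmid p$ splitting completely in $\Q(E[p])$, so $\Frob_{l_0}$ acts trivially on $E[p]$. Because $\ol{E_{l_0}}$ has good reduction and $l_0\nmid p$, the map $\sp$ of \eqref{diag:sp} is an isomorphism modulo $p$, so the reciprocity map induces an isomorphism $\tau_{E_{l_0}}\colon V(E_{l_0})/p\isomto E[p]_{\Frob_{l_0}}=E[p]$. I then set $x:=\tau_{E_{l_0}}^{-1}(R)$, which maps to $\bar R$ in $E[p]_{G_\Q}$, and use $b=x$, supported at $l_0$, as a lift of $\bar R$. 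Since $\dbar_{E_{l_0},p}$ is, through the commutative square \eqref{diag:sp}, the composite of $\tau_{E_{l_0}}$ with the reduction isomorphism, this yields $\partial(\bar R)=\big[\,\ol{R}_{l_0}\,\big]$, the class of the reduction $\ol{R}_{l_0}$ of $R$ at $l_0$.

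It remains to realize $\ol{R}_{l_0}$ globally. Under $V(E)\simeq K(\Q;E,\Gm)$, consider the symbol $\set{R,l_0}$ with $R\in E(\Q)[p]$ and $l_0\in\Q^\times$. By the boundary formula $\d_{E_l}(\set{P,f})=v_l(f)\,\ol P$, its image under $\dbar_{E,p}$ is supported at $l_0$ with value $v_{l_0}(l_0)\,\ol{R}_{l_0}=\ol{R}_{l_0}$, so $\ol{R}_{l_0}\in\Im(\dbar_{E,p})$ and $\partial(\bar R)=0$. Letting $\bar R$ run through a generating set forces $\partial=0$, i.e.\ $\Coker(\dbar_{E,p})=0$; parts (i) and (ii) then follow from the respective descriptions of $E[p]_{G_\Q}$ recalled above.

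The main obstacle is not a computation but the alignment of two descriptions of $\partial$: I must justify that $\partial(\bar R)$ may be computed from a lift supported at the single auxiliary prime $l_0$ (using that $\tau_{E_{l_0}}$ is an isomorphism onto all of $E[p]$ at a totally split good prime), and check that the abstract class coming from $\rho\circ\sp\circ\tau_{E_{l_0}}$ agrees with the reduction $\ol{R}_{l_0}$ produced by the explicit symbol $\set{R,l_0}$. Conceptually the crux is recognizing that the hypotheses encode exactly the condition that $E(\Q)[p]\to E[p]_{G_\Q}$ be surjective, which is what allows a single rational point to be annihilated by one tame symbol; tellingly, the argument collapses for $(\mathrm{B}_p)$ and for $\Delta(E)\notin\Q^2$, where $E(\Q)[p]$ maps to $0$ in $E[p]_{G_\Q}$ and indeed $\Coker(\dbar_{E,p})\neq0$ (cf.\ \autoref{ex:651}).
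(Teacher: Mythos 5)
Your reduction of the statement to the vanishing of the connecting map $\partial\colon E[p]_{G_\Q}\to\Coker(\dbar_{E,p})$ is legitimate (the local maps $\dbar_{E_v,p}$ are surjective for $v\in\Sigma_\good(E)$, so $\partial$ does surject onto the cokernel), and you correctly isolate the structural point that the hypotheses make $E(\Q)[p]\to E[p]_{G_\Q}$ surjective. But the central step --- the identity $\dbar_{E_{l_0},p}\bigl(\tau_{E_{l_0}}^{-1}(R)\bigr)=[\,\ol{R}_{l_0}\,]$ in $\ol{E_{l_0}}(\F_{l_0})/p$ --- is not just unverified, it is false for some admissible choices of $l_0$. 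Your claimed identity equates the isomorphism $\dbar_{E_{l_0},p}\circ\tau_{E_{l_0}}^{-1}\colon E[p]\to\ol{E_{l_0}}(\F_{l_0})/p$ with the map ``reduce the torsion point and take its class mod $p$,'' i.e.\ with the composite $E[p]\xrightarrow{\red}\ol{E_{l_0}}(\F_{l_0})[p]\hookrightarrow\ol{E_{l_0}}(\F_{l_0})\to\ol{E_{l_0}}(\F_{l_0})/p$. The latter composite is \emph{not} injective whenever $\ol{E_{l_0}}(\F_{l_0})$ has a point of order $p^2$ (e.g.\ $\ol{E_{l_0}}(\F_{l_0})\simeq\Z/p^2\times\Z/p$, which occurs for $l_0$ split in $\Q(E[p^2])$), while the former is an isomorphism; so the two maps differ. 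Concretely, $[\,\ol{R}_{l_0}\,]$ can vanish in $\ol{E_{l_0}}(\F_{l_0})/p$ even though $\tau_{E_{l_0}}^{-1}(R)\neq 0$, in which case your symbol $\set{R,l_0}_{\Q/\Q}$ maps to $0$ under $\dbar_{E,p}$ and kills nothing. The commutative square \eqref{diag:sp} does not rescue this: mod $p$ it identifies $\dbar_{E_{l_0},p}$ with $\rho_{\ol{E_{l_0}}}^{-1}\circ\sp\circ\tau_{E_{l_0}}$, and the geometric reciprocity map $\rho_{\ol{E_{l_0}}}$ mod $p$ is the Kummer/Lang connecting map $\ol{E_{l_0}}(\F_{l_0})/p\to\ol{E_{l_0}}[p]$, not the inverse of ``include the $p$-torsion and project.''

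This is where the real content of the paper's proof lives, and none of it appears in your proposal. The paper proves surjectivity directly, prime by prime: it shows $\ol{E_l}(\F_l)/p$ has dimension $\le 2$ with equality exactly when $l$ splits completely in $F=\Q(E[p])=\Q(\mu_p)$, and in that case it needs a \emph{second} generator, produced from the non-rational torsion point $Q\in E(F)[p]$ via the symbol $\set{Q,l}_{F/\Q}$ and the projection formula (giving $(p-1)\Qbar_l$, a unit multiple); when $l$ does not split it rules out $\dim_{\Fp}\ol{E_l}(\F_l)[p]=2$ by a norm argument, and it treats $l=p$ (ordinary vs.\ supersingular) separately. Your single rational point $R$ cannot substitute for this: even after choosing $l_0$ so that $\ol{E_{l_0}}(\F_{l_0})$ has no $p^2$-torsion, you would still have to identify $\dbar_{E_{l_0},p}\circ\tau_{E_{l_0}}^{-1}$ on the line $\Fp R$ with the class of $\ol{R}_{l_0}$, which is precisely the kind of explicit computation of the reciprocity map that your proposal defers. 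As written, the argument has a genuine gap at its only nontrivial step.
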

\begin{proof}
	(i)
	For each $l\in \Sigma_{\good}(E)$,  
	consider the composition 
	\[
	\dbar_{E,p}^{(l)}\colon V(E)/p\xrightarrow{\dbar_{E,p}}  \bigoplus_{l\in \Sigma_\good(E)}\ol{E_l}(\F_l) /p\xrightarrow{\mathrm{projection}} \ol{E_l}(\F_l)/p.
	\]
	By the construction (\Cf\eqref{eq:dX}), 
	and the isomorphism $V(E) \simeq K(\Q;E,\Gm)$ (\Cf \eqref{eq:Som}), 
	the map $\dbar_{E,p}$ is given by 
	\[
	\dbar_{E,p}^{(l)} (\set{P, f}_{F/\Q}) = \sum_{v\mid l}  v(f)N_{\F_v/\F_l}(\Pbar_v) 
	\]
	for $f\in F^{\times}$ and $P\in E(F)$, 
	where the place 
	$v$ is considered as the valuation map  $v\colon F^{\times} \to \Z$ corresponding to $v\mid l$, 
	$\F_{v}$ is the residue field of the local field $F_v$, 
	and 
	$\Pbar_v \in \ol{E_v}(\F_v)$ is the image of the reduction map $E(F)\hookrightarrow E_v(F_v) \to \ol{E_v}(\F_v)$ of $P$ at $v$. 

	Take a non-zero $P \in E(\Q)[p]$. 
	Put $F = \Q(E[p])$ and consider a basis $\set{P,Q}$ of $E(F)[p]$ with $Q\not\in E(\Q)[p]$. 
	The image of $\rho_{E,p}$ is 
	\[
	\pmat{1 & 0 \\ 0  & \Im(\chi_p)}
	\]
	(\Cf\eqref{eq:Imrho}). 
	The mod $p$ character $\chi_p$ is surjective, $F = \Q(\mu_p)$ and $[F:\Q] = p-1$.
	Consider the short exact sequence of finite groups 
	\[
	0 \to \ol{E_l}(\Fl)[p]\to \ol{E_l}(\Fl)\xrightarrow{p}\ol{E_l}(\Fl)\to \ol{E_l}(\Fl)/p\to 0.
	\]
	By counting the orders, we have 
	\begin{equation}
		\label{eq:dim}
		\dimFp( \ol{E_l}(\Fl)[p]) = \dim_{\Fp}(\ol{E_l}(\Fl)/p).
	\end{equation}
	
%

	\sn
	\textbf{Case $l\neq p$}:  
	The reduction map $\red_l\colon E_l(\Ql)\to \ol{E_l}(\Fl)$ gives 
	a commutative diagram with exact rows: 
	\[
	\xymatrix{
	0 \ar[r] & \Ehat_l(l\Z_l)\ar[r]\ar[d]^p &  E_l(\Ql)\ar[r]^{\red_l}\ar[d]^p & \ol{E_l}(\Fl)\ar[r]\ar[d]^p & 0\\
	0 \ar[r] & \Ehat_l(l\Z_l)\ar[r] &  E_l(\Ql)\ar[r]^{\red_l} & \ol{E_l}(\Fl)\ar[r] & 0\\
	}
	\]
	where $\Ehat_l(l\Z_l)$ is the group associated to the formal group law $\Ehat_l$ of $E_l$ 
	(\cite[Chap.~VII, Prop.~2.1, Prop.~2.2]{Sil106}). 
	By the snake lemma, there is a long exact sequence 
	\begin{align*}
	0& \to\Ehat_l(l\Z_l)[p] \to  E_l(\Ql)[p]\xrightarrow{\red_l} \ol{E_l}(\Fl)[p] \\
	 &\xrightarrow{\delta} \Ehat_l(l\Z_l)/p \to E_l(\Ql)/p\xrightarrow{\red_l} \ol{E_l}(\Fl)/p\to 0.
	\end{align*}
	Since $\Ehat_l(l\Z_l) \simeq l\Z_l$ (\cite[Chap.~IV, Thm.~6.4]{Sil106}), 
	and $\Ehat_l(l\Z_l)[p] = \Ehat_l(l\Z_l)/p = 0$. 
	We obtain 
	\[
	\dim_{\Fp} (E_l(\Ql)[p] )= \dimFp( \ol{E_l}(\Fl)[p]) \stackrel{\eqref{eq:dim}}{=} \dim_{\Fp}( \ol{E_l}(\Fl)/p) = \dim_{\Fp}(E_l(\Ql)/p).
	\]
	Take a place $v \mid l$ of $F$. 
	For the reduction map $E_v(F_v)[p]\to \ol{E_v}(\F_v)$ is injective (\cite[Chap.~VII, Prop.~3.1]{Sil106}), 
	$\dimFp(\Evbar(\Fv)[p]) = \dimFp(\Evbar(\Fv)/p) = 2$. 
	
	Consider the case where the extension $F/\Q$ is completely split at $l$. 
	We have 
	$E_v(F_v)[p] = E_l(\Ql)[p] \simeq \ol{E_l}(\Fl)[p]$. 
	The group $\ol{E_l}(\Fl)/p$ is generated by $\Pbar_l$ and $\Qbar_l$ 
	the images of $P$ and $Q$ by the reduction map $\red_l$. 
	The equality 
	\[
		\dbar_{E,p}^{(l)}(\set{P,l}_{\Q/\Q}) = \Pbar_l
	\] 
	holds and the projection formula gives 
	\[
	\dbar_{E,p}^{(l)}(\set{Q,l}_{F/\Q})  = \sum_{v\mid l}\Qbar_l = (p-1)\Qbar_l.
	\] 
	The map $\dbar_{E,p}^{(l)}$ is surjective. 
	
	Next, we assume that the extension $F/\Q$ is not completely split at $l$. 
	The extension $F/\Q$ is unramified at $l \neq p$. 
	In particular, $l\not\equiv 1 \bmod p$. 
	Since the reduction map $\red_l \colon E_l(\Q_l)[p]\hookrightarrow \ol{E_l}(\F_l)[p]$ is injective, 
	the image $\Pbar_l = \red_l(P)$ of $P\in E(\Q)[p]$ is non-zero. 
	We have 
	\[
		\dbar_{E,p}^{(l)}(\set{P,l}_{\Q/\Q}) = \Pbar_l, 
	\] 
	and $\dimFp(\ol{E_l}(\Fl)/p)\ge 1$. 
	To show $\dim_{\Fp}(\ol{E_l}(\Fl)[p]) = 1$,  
	we assume $\dim_{\Fp}(\ol{E_l}(\Fl)[p]) = 2$. 
	Then, $\dim_{\Fp} (E_l(\Ql)[p]) = 2$. 
	Take the place $v$ of $F$ above $l$, there is a commutative diagram:
	\[
	\xymatrix{
	E(F)[p]\ar[d]^{N_{F/\Q}} \ar[r]^{\simeq} & E_v(F_v)[p]\ar[r]^{\simeq}\ar[d]^{N_{F_v/\Q_l}} & \ol{E_v}(\F_v)[p]\ar[d]^{N_{\F_v/\Fl}} \\ 
	E(\Q)[p] \ar@{^{(}->}[r] & E_l(\Ql)[p]\ar[r]^{\simeq} & \ol{E_l}(\F_l)[p] 
	}
	\]
	In the above diagram, the vertical maps are surjective because 
	$[F:\Q] = p-1$. 
	Therefore, the norm maps $N_{F_v/\Ql}$ and $N_{\F_v/\Fl}$ are bijective. 
	In particular, $N_{F_v/\Ql}(Q) \neq 0$ in $E_l(\Ql)[p]$.  
	This implies $N_{F/\Q}(Q) \neq 0$ in $E(\Q)[p]$.  
	The points $P, N_{F/\Q}(Q)$ are linearly independent. 
	This contradicts $\dimFp(E(\Q)[p]) = 1$. 
	
	\sn 
	\textbf{Case $l = p$}: 
	The extension $F/\Q$ is totally ramified at $p$. 
	When $E$ has good supersingular reduction at $p$, 
	$\ol{E_p}[p] = 0$ and hence $\ol{E_p}(\Fp)/p =  0$. 
	We may assume that $\ol{E_p}$ is ordinary. 
	Consider the following exact sequence as above: 
	\begin{align*}
	0&\to \Ehat_p(p\Zp)[p] \to  E_p(\Qp)[p]\xrightarrow{\red_p} \ol{E_p}(\Fp)[p] \\
	&\xrightarrow{\delta} \Ehat_p(p\Zp)/p \to E_p(\Qp)/p\xrightarrow{\red_p} \ol{E_p}(\Fp)/p\to 0.
	\end{align*}
	By the formal logarithm $\Ehat_p(p\Zp) \simeq p\Zp$ (\cite[Chap.~IV, Thm.~6.4]{Sil106}), 
	we have $\Ehat_p(p\Zp)[p] = 0$ and $E_p(\Qp) \simeq \Zp\oplus E_p(\Qp)_{\tor}$ (\Cf\cite[Lem.~1]{Hir19}). 
	By the Hasse bound (\cite[Chap.~V, Thm.~1.1]{Sil106}), there are inequalities 
	$\#\ol{E_p}(\Fp) < 2\sqrt{p} + p+1< p^2$ and hence 
	\begin{align*}
		&\dimFp(\ol{E_p}(\Fp)[p]) = \dimFp (\ol{E_p}(\Fp)/p) = 1,\\
		&\dimFp(\Ehat_p(p\Zp)/p) = 1,\quad \mbox{and}\\
		&\dimFp(E_p(\Qp)/p) = \dimFp(E_p(\Qp)[p]) +1 = 2. 
		\end{align*}
	The rational point $P \in E(\Q)[p]$ generates $E_p(\Qp)[p]$. 
	Since the reduction map $\red_p\colon E_p(\Qp)[p]\to \ol{E_p}(\Fp)[p]$ 
	is injective, $\ol{E_p}(\Fp)[p]$ is generated by $\Pbar_p = \red_p(P)$. 
	In particular, $\Pbar_p\neq 0$ in $\ol{E_p}(\Fp)$. 
	The equality $\dbar_{E,p}^{(p)} (\set{P,p}_{\Q/\Q}) = \Pbar_p$ indicates that 
	$\dbar_{E,p}^{(p)}$ is surjective. 
	
	To show the assertion, 
	take any element $\ol{R} = \sum_l \ol{R}_l$ in $\bigoplus_{l\in \Sigma_{\good}(E)}\ol{E_l}(\Fl)/p$ with $\ol{R}_l\in \ol{E_l}(\Fl)/p$. 
	There is a finite set of primes $S\subset \Sigma_{\good}(E)$ such that 
	$\ol{R}_l= 0$ for any $l\in \Sigma_{\good}(E)\ssm S$. 
	Hence,  
	\[
	\sum_{l\in S}\dbar_{E,p}(\set{P,l}_{\Q/\Q}),\quad \mbox{and}\quad \sum_{l\in S}\dbar_{E,p}(\set{Q,l}_{F/\Q})
	\]
	generates $\ol{R}$. 
	
	\sn
	(ii) 
	In the case $E(\Q)[2]\neq 0$ and $\Delta(E)\in \Q^2$, 
	the mod $2$ Galois representation $\rho_{E,2}$ is trivial 
	so that $E(\Q)[2] = E[2]$. 
	Take a basis $\set{P,Q}$ of $E(\Q)[2]$. 
	The equalities  
	\[
	\dbar_{E,2}^{(l)}(\set{P,l}_{\Q/\Q}) = \Pbar_l,\quad\mbox{and},\quad \dbar_{E,2}^{(l)}(\set{Q,l}_{\Q/\Q}) = \Qbar_l
	\]
	implies the assertion.
\end{proof}

Recalling from \autoref{lem:mult}, for a finite extension $K/\Ql$, 
we put
\[
M_K = \max\set{m |\mu_m\subset K}, 
\]
and $\dt_K\colon K^{\times}\times K^{\times}\to \mu_{M_K}$ is the tame symbol map defined by 
\begin{equation}
\label{eq:tame}	
\dt_K(a,b) = (-1)^{v_K(a)v_K(b)}\dfrac{b^{v_K(a)}}{a^{v_K(b)}}\ \bmod \m_K.
\end{equation}

\begin{lem}\label{lem:multQ}
Let $E_{l}$ be an elliptic curve over $\Q_l$ which has multiplicative reduction,
and $p$ a rational prime. 
\begin{enumerate}
	\item If $l=p$ and $p>2$, then $V(E_p)/p = 0$. 
	\item If $l\neq p$ and $E_l$ has split multiplicative reduction, then 
	we have 
	\[
	\dimFp(V(E_l)/p) = 
	\begin{cases}
	1, & \mbox{if $l-1 \equiv \dfrac{l-1}{\#\dt_{\Ql}(q_l,\Ql^{\times})} \equiv 0 \bmod p$}\\
	0, & \mbox{otherwise},
	\end{cases}
	\]
	where $q_l \in \Ql^\times$ is the Tate parameter of $E$.
	\item 
	If $l\neq p$, $E_l$ has non-split multiplicative reduction, 
	and assume $p\nmid l^2-1$, or $p\nmid \dfrac{l^2-1}{\#\dt_{K}(q_K,K^{\times})}$, 
	where $K/\Ql$ is a quadratic extension such that $E_K$ has split multiplicative reduction,  
	and $q_K \in K^\times$ is the Tate parameter of $E_K$. 
	Then $\dimFp(V(E_l)/p) =0$  
	\end{enumerate}
\end{lem}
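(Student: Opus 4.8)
The plan is to deduce all three parts from the general local computations \autoref{lem:mult} and \autoref{lem:nonsp}, specializing the base field to $K=\Ql$ in the split case and, in the non-split case, passing to the unramified quadratic extension over which $E$ acquires split reduction. The essential arithmetic input throughout is that for $l\neq p$ the prime $p$ is coprime to the residue characteristic, so the $p$-primary part of every Hilbert symbol occurring in \autoref{lem:mult}(iii) is \emph{tame} and is therefore computed by the symbol $\dt$ of \eqref{eq:tame}.

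For (i), the base field $\Qp$ has absolute ramification index $e_{\Qp/\Qp}=1$, and $p>2$ gives $1<p-1$; thus \autoref{lem:mult}(ii)(a) in the split case and \autoref{lem:nonsp}(ii)(a) in the non-split case (where $p$ is odd) both force $V(E_p)/p=0$. For (ii), the extension $\Ql/\Ql$ is trivially abelian, so \autoref{lem:mult}(iii) applies with $M(E_l)=M_{\Ql}/\#(q_l,\Ql^\times)_{M_{\Ql}}$ and yields $\dimFp(V(E_l)/p)=1$ exactly when $p\mid M(E_l)$. Since the torsion of $\Ql^\times$ is $\mu_{l-1}$ together with $l$-power roots of unity, one has $v_p(M_{\Ql})=v_p(l-1)$ for $p\neq l$; combining this with the tame identification of the Hilbert symbol and $\dt_{\Ql}$ gives $v_p(M(E_l))=v_p\!\big((l-1)/\#\dt_{\Ql}(q_l,\Ql^\times)\big)$. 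Thus $p\mid M(E_l)$ is equivalent to $p\mid(l-1)/\#\dt_{\Ql}(q_l,\Ql^\times)$, which is the stated condition (its first congruence $p\mid l-1$ being automatic from the second, as $\#\dt_{\Ql}(q_l,\Ql^\times)\mid(l-1)$).

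For (iii), where $p$ is odd and $E_l$ is non-split, the residue field of $\Ql$ has order $l$, so if $p\nmid l^2-1$ then \autoref{lem:nonsp}(ii)(b) gives $V(E_l)/p=0$ at once. For the remaining alternative, let $K/\Ql$ be the unramified quadratic extension over which $E_K$ is split multiplicative; as in the proof of \autoref{lem:nonsp}, the restriction $V(E_l)/p\hookrightarrow V(E_K)/p$ is injective for odd $p$, so it suffices to prove $V(E_K)/p=0$. Now $K/\Ql$ is abelian with residue field of order $l^2$, whence $v_p(M_K)=v_p(l^2-1)$; applying \autoref{lem:mult}(iii) over $K$ and the same tame comparison yields $v_p(M(E_K))=v_p\!\big((l^2-1)/\#\dt_K(q_K,K^\times)\big)$, so the hypothesis $p\nmid(l^2-1)/\#\dt_K(q_K,K^\times)$ gives $p\nmid M(E_K)$ and hence $V(E_K)/p=V(E_l)/p=0$.

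The one step demanding genuine care is the comparison between the $M$-th Hilbert symbol of \autoref{lem:mult}(iii) and the tame symbol $\dt$ of \eqref{eq:tame}: I would invoke the explicit tame local-symbol formula to check that, whenever $p$ is prime to the residue characteristic, the two pairings have the same $p$-primary image under the reduction isomorphism $\mu_{M}\isomto \F_K^\times$, as they differ only by an automorphism of this cyclic group, which preserves the order of every subgroup. Granting this, each part is a direct unwinding of the already-established local lemmas, the only bookkeeping being the computations of $M_{\Ql}$ and $M_K$ above.
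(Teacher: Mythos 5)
Your proposal is correct and follows essentially the same route as the paper: part (i) from the local lemmas in the case $l=p$, part (ii) by applying \autoref{lem:mult}(iii) over $K=\Ql$ with $M_{\Ql}=l-1$ and the identification of the $(l-1)$-st Hilbert symbol with the tame symbol, and part (iii) by passing to the unramified quadratic extension and using the injectivity of restriction. Your extra care in (i) (invoking \autoref{lem:nonsp} for the non-split case) and your observation that the congruence $p\mid l-1$ is redundant are both sound but do not change the argument.
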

\begin{proof}
	(i) This follows directly from \autoref{lem:mult} (ii) (the case (a)).
	
	\noindent 
	(ii) By \autoref{lem:mult} (ii), 
	if $p\mid l-1$ then $V(E_l)/p = 0$. 
	In particular, we may assume $l>2$. 
	By \cite[Chap.~IV, Sect.~4, Prop.~17]{Ser68}, 
	we have $M_{\Q_l} = l-1$. 
	From \autoref{lem:mult} (iii), 
	the order $M({E_l})$ of the finite cyclic group $V(E_l)/V(E_l)_{\div}$ is written 
	by the Hilbert symbol 
	\[
	M_{\Ql}^{\ast} = \dfrac{l-1}{\#(q_l,\Q_l^{\times})_{l-1}}.
	\]
	The Hilbert symbol coincides with the tame symbol map (\cite[Chap.~IV, (5.3)]{FV02}).
	
	\noindent
	(iii) Take the unramified quadratic extension $K/\Ql$ such that $E_K$ has split multiplicative reduction. 
	In the same way as above, $M_K = l^2-1$ 
	and 
	\[
	M_K^{\ast} = \dfrac{l^2-1}{\#\dt_K(q_K,K^{\times})}.
	\]
	The assertion follows from the injection 
	$\res_{K/\Ql}\colon V(E_l)/p\hookrightarrow V(E_K)/p$. 
\end{proof}

\begin{rem}\label{rem:tame}
	For an elliptic curve $E_l$ over $\Ql$ which has split multiplicative reduction, 
	the image of the tame symbol map 
	$\dt_{\Ql}(q_l,\Ql^{\times})$ 
	is determined as follows: 
	If $l=2$, then $\dimFp(V(E_2)/p) = 0$ by \autoref{lem:multQ} so that 
	we consider the case $l>2$. 
	Let $\Delta(E_l)$ be the discriminant of $E_l$. 
	Its $l$-adic valuation coincides with that of the Tate parameter $q_l$ of $E_l$: 
	$m := v_l(\Delta(E_l)) = v_l(q_l)$. 
	By the structure theorem  
	$\Z_l^{\times} \simeq \mu_{l-1} \times  (1+l\Z_l)$ (\cite[Chap.~II, Prop.~5.3]{Neu99}) 
	and the unit group $1 + l\Z_l$ is $(l-1)$-divisible (\cite[Chap.~I, (5.5) Cor.]{FV02}). 
	There exists $n$ such that $q_l/l^{m} = \zeta^n v^{l-1}$, 
	for some $v\in 1+l\Z_l$ and a primitive $(l-1)$-root of unity $\zeta$. 
	Note that $z = \zeta \bmod l \in (\Z/l)^{\times}$ is a primitive root of modulo $l$.
	It is easy to see  
	\[
	\dt_{\Ql}(q_l,\Q_l^{\times}) = \dt_{\Ql}(l,\Ql^{\times})^m\dt_{\Ql}(r,\Ql^\times)^n =
	\mu_{l-1}^m\mu_{l-1}^n \subset \mu_{l-1}.
	\]
	Therefore, 
	$V(E_l)/p \simeq \Z/\gcd(p,m,n)$ and hence 
	\[
	\dimFp(V(E_l)/p) = 	
	\begin{cases}
	1, & \mbox{if $l-1 \equiv m \equiv n \equiv 0 \bmod p$},\\
	0, & \mbox{otherwise}.	
	\end{cases}
	\]
	
	For example, 
	let $E^{(2)}$ be the elliptic curve over $\Q$ 
	with Cremona label 651e2 referred in \autoref{ex:651}. 
	We have $\Delta(E^{(2)}) = -1\cdot 3^3\cdot 7^3 \cdot 31^3$. 
	The mod $p$ Galois representation $\rho_{E,p}$ is surjective for all $p\neq 3$. 
	For the remained prime $p=3$, 
	we determine the dimension $\dim_{\F_3}(V( E^{(2)}_{l})/3)$ 
	of the base change $E^{(2)}_l := E^{(2)} \otimes_\Q \Ql$ 
	for the bad primes $l=3,5$ and $7$. 
	For $l=3$, $\dim_{\F_3}(V( E^{(2)}_{3})/3) = 0$ because of $l=p$ (\autoref{lem:multQ}). 
	For $l = 7$ and $31$, the Tate parameters are of the form 
	\[
	 q_7 = 6\cdot 7^3 + \cdots,\quad q_{31} = 8\cdot 31^3 + \cdots .
	\]
	As $6 = 3^3$ in $(\Z/7)^{\times}$ and $8 = 3^{12}$ in $(\Z/31)^{\times}$, 
	we obtain 
	\[
	\dim_{\F_3}(V(E^{(2)}_{7})/3) = \dim_{\F_3}(V( E^{(2)}_{31})/3) = 1.
	\] 
	
	On the other hand, 
	let $E^{(3)}$ be the elliptic curve over $\Q$ 
	with Cremona label 651e2. 
	By $\Delta(E^{(3)}) = -1\cdot 3\cdot 7 \cdot 31$, 
	$\dim_{\F_3}(V( E^{(3)}_{l})/3) = 0$ for all $l = 3,7$ and $31$. 
\end{rem}

\begin{thm}
\label{thm:EQ}
	Let $E$ be an elliptic curve over $\Q$. 
	If $E[p]_{G_{\Q}} \neq 0$ for some odd prime $p$, then 
	there is an exact sequence 
	\[
	0 \to \Ker(\dbar_{E,p})\to  \bigoplus_{l \in \Sigma_\bad(E)}V(E_l)/p  \\
		\to \Z/p\Z \to \Coker(\dbar_{E,p})\to 0. 
	\]
	If we further assume $E$ satisfies $(\mathrm{SC}_p)$, then 
	$\Coker(\dbar_{E,p}) =0$.
\end{thm}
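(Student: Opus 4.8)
The plan is to specialize the general exact sequence of \autoref{thm:fin} to the base field $F=\Q$ and to show that, for an odd prime $p$, every local term in the middle except the bad-place contributions vanishes, while the coinvariant group $E[p]_{G_\Q}$ is forced to be one-dimensional.

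First I would apply \autoref{thm:fin}, whose hypothesis $E[p]_{G_\Q}\neq 0$ is exactly our assumption, to obtain an exact sequence
\[
0\to\Ker(\dbar_{E,p})\to \mathcal{T}\to E[p]_{G_\Q}\to\Coker(\dbar_{E,p})\to 0,
\]
whose middle term is
\[
\mathcal{T}=\bigoplus_{v\in \Sigma_\good(E),\, v\mid p}\Ker(\dbar_{E_v,p})\ \oplus\ \bigoplus_{v \in \Sigma_\bad(E)}V(E_v)/p\ \oplus\ \bigoplus_{v\in P_{\infty}(\Q)\colon\,\mathrm{real}}V(E_v)/p.
\]
I would then simplify $\mathcal{T}$ place by place. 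The only place of $\Q$ above $p$ is $p$ itself, with $F_p=\Qp$ and absolute ramification index $e_p=1$; since $p$ is odd we have $e_p=1<p-1$, so \autoref{lem:Yos}(i) gives $\Ker(\dbar_{E_p,p})=0$ when $p\in\Sigma_\good(E)$, while if $p\in\Sigma_\bad(E)$ the indexing set $\{v\in\Sigma_\good(E)\colon v\mid p\}$ is empty. Since $\Q$ has a single archimedean place, which is real, and $p>2$, \autoref{lem:real}(i) yields $V(E_\R)/p=0$. Thus $\mathcal{T}$ collapses to $\bigoplus_{l\in\Sigma_\bad(E)}V(E_l)/p$.

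It remains to identify $E[p]_{G_\Q}$. As $p$ is odd, $\mu_p\not\subset\Q$, so \autoref{lem:quot}(i) applies. Its two-dimensional case requires $E[p]\subset E(\Q)$, which would make $\rho_{E,p}$ trivial and hence $\chi_p=\det\rho_{E,p}$ trivial, forcing $\mu_p\subset\Q$ --- impossible for odd $p$; its zero-dimensional case $(\mathrm{B}'_p)$ is excluded by the standing hypothesis $E[p]_{G_\Q}\neq 0$. Hence $\dim_{\Fp}E[p]_{G_\Q}=1$, i.e.\ $E[p]_{G_\Q}\simeq\Z/p\Z$, and substituting this into the sequence above gives the asserted four-term exact sequence. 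For the final assertion, if $(\mathrm{SC}_p)$ holds, then \autoref{lem:SCp}(i) says $\dbar_{E,p}$ is surjective, whence $\Coker(\dbar_{E,p})=0$.

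I expect the main obstacle to be the exact identification $E[p]_{G_\Q}\simeq\Z/p\Z$ rather than mere nonvanishing: this is precisely where the choice of base field $\Q$ matters, since ruling out the two-dimensional case relies on $\mu_p\not\subset\Q$ together with the relation $\det\rho_{E,p}=\chi_p$. By comparison, the vanishing of the good-reduction term at $p$ and of the real-place term is routine once \autoref{lem:Yos}(i) and \autoref{lem:real}(i) are available.
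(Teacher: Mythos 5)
Your proposal is correct and follows the same route as the paper: specialize \autoref{thm:fin} to $F=\Q$, kill the $v\mid p$ good-reduction term via \autoref{lem:Yos}(i) (using $e_p=1<p-1$), kill the real place via \autoref{lem:real}(i), identify $E[p]_{G_\Q}\simeq\Z/p\Z$, and invoke \autoref{lem:SCp} for the $(\mathrm{SC}_p)$ case. The one genuine divergence is how you exclude $\dim_{\Fp}(E[p]_{G_\Q})=2$: both arguments reduce this case to $I(E[p])=0$, i.e.\ $E[p]\subset E(\Q)$, but the paper then rules that out by Mazur's theorem on $E(\Q)_{\tor}$, whereas you rule it out by the Weil-pairing relation $\det\rho_{E,p}=\chi_p$, which forces $\mu_p\subset\Q$ and is absurd for odd $p$. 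Your argument is strictly more elementary (it avoids a deep theorem and would work over any base field not containing $\mu_p$), and it is correct; the paper's choice buys nothing extra here. One small caution: you phrase the dichotomy as if \autoref{lem:quot}(i) exhaustively enumerates the possible images of $\rho_{E,p}$, which it does not (e.g.\ nonsplit Cartan or exceptional images are not listed); but your actual logic only needs that $\dim=2$ forces $E[p]\subset E(\Q)$ and that $\dim=0$ is excluded by hypothesis, so the conclusion $\dim=1$ stands without relying on that enumeration.
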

\begin{proof}
%
%
%
By \autoref{thm:fin} (and \autoref{lem:Yos} if $E$ has good reduction at $p$) 
there is an exact sequence 
\[
0 \to \Ker(\dbar_{E,p})\to  \bigoplus_{l \in \Sigma_\bad(E)}V(E_l)/p \oplus V(E_{\R})/p
	\to E[p]_{G_\Q}\to \Coker(\dbar_{E,p})\to 0
\]
of finite dimensional $\Fp$-vector spaces.
As $p$ is odd, $V(E_{\R})/p = 0$ (\autoref{lem:real}). 
If $\dimFp(E[p]_{G_{\Q}}) = 2$, 
then $I(E[p]) = 0$ and $E[p]\subset E(\Q)$. 
By Mazur's theorem on 
the torsion subgroup $E(\Q)_{\tor}$ of $E(\Q)$ 
(\cite[Thm.~2]{Maz78}, \Cf\cite[Thm.~7.5]{Sil106}),   
there is no odd prime $p$ satisfying $E[p] \subset E(\Q)$. 
From this reason, $\dimFp(E[p]_{G_{\Q}}) = 1$. 

Finally, in the case $(\mathrm{SC}_p)$, 
$\Coker(\dbar_{E,p}) = 0$ (\autoref{lem:SCp}). 
\end{proof}

\begin{ex}[Non-split multiplicative]\label{ex:35a1}
Consider the isogeny class of elliptic curves with conductor $35$ consisting of 3 semi-stable elliptic curves 
\[
\xymatrix@R=0mm{
E^{(2)} \ar[r]^{\phi} & E^{(1)} & \ar[l]_{\psi} E^{(3)} \\
\text{35a2} & \text{35a1} & \text{35a3}
}
\]
with isogenies $\phi$ and $\psi$ of degree $3$.  
The Mordell-Weil groups are $E^{(1)}(\Q)\simeq E^{(3)}(\Q) \simeq \Z/3$ and $E^{(2)}(\Q) = 0$ 
(\Cf\cite[\href{https://beta.lmfdb.org/EllipticCurve/Q/35/a/}{Elliptic Curve 35.a}]{lmfdb}). 
As $E^{(3)}$ satisfies $(\mathrm{B}'_3)$, $\dim_{\F_3}(E^{(3)}[3]_{G_{\Q}}) = 0$ (\autoref{lem:quot}). 

The curve $E^{(1)}$ satisfies $(\mathrm{SC}_3)$.  
We have $\Delta(E_{1}) = -1 \cdot 5^3 \cdot 7^3$, 
and $E^{(1)}$ has 
split multiplicative reduction at $7$ 
and non-split multiplicative reduction at $5$. 
For $l=7$, 
the Tate parameter is $q_7=7^3+4\cdot 7^4+ \cdots$ 
and hence 
$\dim_{\F_3}(V( E^{(1)}_7)/3) = 1$, 
where $E^{(1)}_7 := E^{(1)}\otimes_\Q \Q_7$ (\Cf\autoref{rem:tame}). 
For $l =5$, we only have an inequality $\dim_{\F_3}(V(E^{(1)}_5)/3)\le 1$. 
By \autoref{thm:EQ}, the map 
\[
\dbar_{E^{(1)},3}\colon V(E^{(1)})/3 \to \bigoplus_{l\in \Sigma_{\good}(E^{(1)})}(\ol{E^{(1)}_l})(\F_l)/3
\]
is surjective with $\dim_{\F_3}(\Ker(\dbar_{E,3})) \le 1$. 

The curve $E^{(2)}$ satisfies $(\mathrm{B}_3)$. 
We have $\Delta(E^{(2)}) = -5^9\cdot 7$ 
and $E^{(2)}$ has split multiplicative reduction at $7$ 
and non-split multiplicative reduction at $5$. 
For the bad prime $l=7$, $v_7(\Delta(E^{(2)})) = 1$ implies $\dim_{\F_3}(V(E^{(2)}_7)/3) = 0$ (\Cf\autoref{rem:tame}). 
For $l=5$, 
we have $\dim_{\F_3}(V( E^{(2)}_5)/3)\le 1$ (\autoref{lem:nonsp}).  
Inequalities $\dim_{\F_3}(\Ker(\dbar_{E^{(2)},3}))\le 1$ and $\dim_{\F_3}(\Coker(\dbar_{E^{(2)},3}))\le 1$ hold. 
\end{ex}

\begin{ex}[{Non-trivial $\Q$-rational 2-torsion}]\label{ex:17a2}
Let $E$ be an elliptic curve over $\Q$ defined by 
\[
y^2+xy+y=x^3-x^2-6x-4
\]
(the Cremona label 17a2, \Cf \cite[\href{https://beta.lmfdb.org/EllipticCurve/Q/17/a/2}{Elliptic Curve 17.a2}]{lmfdb}) 
The Mordell-Weil group is $E(\Q) \simeq \Z/2\oplus \Z/2$, 
$\rho_p$ is surjective for all $p\neq 2$, and $\Delta(E) = 17^2$. 
For the prime $p = 2$, 
we have $\dim_{\F_2}(E[2]_{G_{\Q}}) = 2$ by \autoref{lem:even}. 
The elliptic curve $E$ has good reduction outside $17$ and 
has split multiplicative reduction at $17$. 
The Tate parameter $q_{17} \in \Q_{17}$ is of the form 
\[
q_{17} = 17^2 + 3\cdot 17^3 + \cdots. 
\]  
By similar arguments in \autoref{rem:tame}, $\dim_{\F_2}(V(E_{17})/2) = 1$. 
\autoref{lem:real} gives an inequality $\dim_{\F_2}(V(E_{\R})/2) \le 2$. 
Furthermore, 
the local boundary map at $2$ is surjective so that $\Coker(\dbar_{E,2}) = 0$ (\autoref{lem:SCp}). 
%
As a consequence,  \autoref{thm:fin} gives an exact sequence  
\[
0\to \Ker(\dbar_{E,2}) \to \Ker(\dbar_{E_2,2})\oplus (\Z/2) \oplus V(E_{\R})/2 \to (\Z/2)^{\oplus 2} \to 0
\]
and $\dim_{\F_2}(\Ker(\dbar_{E,2}))\le 2$.
\end{ex}

%
%
%
%

%
%
%
%
%
%
%


\begin{thebibliography}{{LMF}25}

\bibitem[Asa06]{Asa06}
M.~Asakura, \emph{Surjectivity of {$p$}-adic regulators on {$K_2$} of {T}ate
  curves}, Invent. Math. \textbf{165} (2006), no.~2, 267--324.

\bibitem[Blo81]{Blo81}
S.~Bloch, \emph{Algebraic {$K$}-theory and classfield theory for arithmetic
  surfaces}, Ann. of Math. (2) \textbf{114} (1981), no.~2, 229--265.

\bibitem[FV02]{FV02}
I.~B. Fesenko and S.~V. Vostokov, \emph{Local fields and their extensions},
  second ed., Translations of Mathematical Monographs, vol. 121, American
  Mathematical Society, Providence, RI, 2002, With a foreword by I. R.
  Shafarevich.

\bibitem[GH21]{GH21}
E.~Gazaki and T.~Hiranouchi, \emph{Divisibility results for zero-cycles},
  European Journal of Math (2021), 1--44.

\bibitem[GH23]{GH23}
\bysame, \emph{Abelian geometric fundamental groups for curves over a
  {$p$}-adic field}, J. Th\'eor. Nombres Bordeaux \textbf{35} (2023), no.~3,
  905--946.

\bibitem[Hir19]{Hir19}
T.~Hiranouchi, \emph{Local torsion primes and the class numbers associated to
  an elliptic curve over {$\mathbb{Q}$}}, Hiroshima Math. J. \textbf{49}
  (2019), no.~1, 117--127.

\bibitem[Hir22]{Hir22}
\bysame, \emph{Galois symbol map for a {T}ate curve}, Bull. Kyushu Inst.
  Technol. Pure Appl. Math. (2022), no.~69, 1--6.

\bibitem[Kat86]{Kat86b}
K.~Kato, \emph{Milnor {$K$}-theory and the {C}how group of zero cycles},
  Applications of algebraic {$K$}-theory to algebraic geometry and number
  theory, {P}art {I}, {II} ({B}oulder, {C}olo., 1983), Contemp. Math., vol.~55,
  Amer. Math. Soc., Providence, RI, 1986, pp.~241--253.

\bibitem[KL81]{KL81}
N.~M. Katz and S.~Lang, \emph{Finiteness theorems in geometric classfield
  theory}, Enseign. Math. (2) \textbf{27} (1981), no.~3-4, 285--319 (1982),
  With an appendix by Kenneth A. Ribet.

\bibitem[KS83]{KS83b}
K.~Kato and S.~Saito, \emph{Unramified class field theory of arithmetical
  surfaces}, Ann. of Math. (2) \textbf{118} (1983), no.~2, 241--275.

\bibitem[{LMF}25]{lmfdb}
The {LMFDB Collaboration}, \emph{The {L}-functions and modular forms database},
  \url{https://www.lmfdb.org}, 2025, [Online; accessed 14 March 2025].

\bibitem[Maz78]{Maz78}
B.~Mazur, \emph{Rational isogenies of prime degree (with an appendix by {D}.
  {G}oldfeld)}, Invent. Math. \textbf{44} (1978), no.~2, 129--162.

\bibitem[Mil06]{MilADT}
J.~S. Milne, \emph{Arithmetic duality theorems}, second ed., BookSurge, LLC,
  Charleston, SC, 2006.

\bibitem[Neu99]{Neu99}
J.~Neukirch, \emph{Algebraic number theory}, Grundlehren der Mathematischen
  Wissenschaften [Fundamental Principles of Mathematical Sciences], vol. 322,
  Springer-Verlag, Berlin, 1999, Translated from the 1992 German original and
  with a note by Norbert Schappacher, With a foreword by G. Harder.

\bibitem[NSW08]{NSW08}
J.~Neukirch, A.~Schmidt, and K.~Wingberg, \emph{Cohomology of number fields},
  second ed., Grundlehren der Mathematischen Wissenschaften [Fundamental
  Principles of Mathematical Sciences], vol. 323, Springer-Verlag, Berlin,
  2008.

\bibitem[Ram]{Ram25}
R.~Ramakrishnan, \emph{Global galois symbols on {$E\times E$}}, to appear in
  Indag. Math., arXiv:2407.20468.

\bibitem[Ras90]{Ras90}
W.~Raskind, \emph{On {$K_1$} of curves over global fields}, Math. Ann.
  \textbf{288} (1990), no.~2, 179--193.

\bibitem[RS00]{RS00}
W.~Raskind and M.~Spiess, \emph{Milnor {$K$}-groups and zero-cycles on products
  of curves over {$p$}-adic fields}, Compositio Math. \textbf{121} (2000),
  1--33.

\bibitem[RV01]{RV01}
A.~Reverter and N.~Vila, \emph{Images of mod {$p$} {G}alois representations
  associated to elliptic curves}, Canad. Math. Bull. \textbf{44} (2001), no.~3,
  313--322.

\bibitem[{Sag}24]{SAGE}
{Sage Developers}, \emph{{S}age{M}ath, the {S}age {M}athematics {S}oftware
  {S}ystem}, 2024.

\bibitem[Sai85]{Sai85a}
S.~Saito, \emph{Class field theory for curves over local fields}, J. Number
  Theory \textbf{21} (1985), no.~1, 44--80.

\bibitem[Ser68]{Ser68}
J.-P. Serre, \emph{Corps locaux}, Hermann, Paris, 1968, Deuxi\`eme \'edition,
  Publications de l'Universit\'e de Nancago, No. VIII.

\bibitem[Ser72]{Ser72}
\bysame, \emph{Propri\'et\'es galoisiennes des points d'ordre fini des courbes
  elliptiques}, Invent. Math. \textbf{15} (1972), no.~4, 259--331.

\bibitem[Ser96]{Ser96}
\bysame, \emph{Travaux de {W}iles (et {T}aylor, {$\ldots$}). {I}}, no. 237,
  1996, S\'eminaire Bourbaki, Vol.\ 1994/95, pp.~Exp. No. 803, 5, 319--332.

\bibitem[Sil09]{Sil106}
J.~H. Silverman, \emph{The arithmetic of elliptic curves}, second ed., Graduate
  Texts in Mathematics, vol. 106, Springer, Dordrecht, 2009.

\bibitem[Sil13]{Sil151}
\bysame, \emph{Advanced topic in the arithmetic of elliptic curves}, Graduate
  Texts in Mathematics, vol. 151, Springer, Dordrecht, 2013.

\bibitem[Som90]{Som90}
M.~Somekawa, \emph{On {M}ilnor {$K$}-groups attached to semi-abelian
  varieties}, $K$-Theory \textbf{4} (1990), no.~2, 105--119.

\bibitem[Wei05]{Wei05}
C.~Weibel, \emph{Algebraic {$K$}-theory of rings of integers in local and
  global fields}, Handbook of {$K$}-theory. {V}ol. 1, 2, Springer, Berlin,
  2005, pp.~139--190.

\bibitem[Yam05]{Yam05}
T.~Yamazaki, \emph{On {C}how and {B}rauer groups of a product of {M}umford
  curves}, Math. Ann. \textbf{333} (2005), 549--567.

\bibitem[Yos02]{Yos02}
T.~Yoshida, \emph{Abelian \'etale coverings of curves over local fields and its
  application to modular curves}, thesis (2002).

\end{thebibliography}

\def\cprime{$'$}
\providecommand{\bysame}{\leavevmode\hbox to3em{\hrulefill}\thinspace}
\providecommand{\href}[2]{#2}

\end{document}